\newcommand{\kk}{\mathbb{k}}
\newcommand{\GG}{\mathcal{G}}
\newcommand{\NN}{\normalfont\mathbb{N}}
\newcommand{\ZZ}{{\normalfont\mathbb{Z}}}
\newcommand{\Z}{{\normalfont\mathbb{Z}}}
\newcommand{\PP}{\normalfont\mathbb{P}}
\newcommand{\xx}{\normalfont\mathbf{x}}
\newcommand{\mm}{{\normalfont\mathfrak{m}}}
\newcommand{\QQ}{\mathbb{Q}}
\newcommand{\pp}{\mathfrak{p}}
\newcommand{\bb}{\mathfrak{b}}
\newcommand{\qqq}{\mathfrak{q}}
\newcommand{\rank}{\normalfont\text{rank}}
\newcommand{\Ext}{\normalfont\text{Ext}}
\newcommand{\Ker}{\normalfont\text{Ker}}
\newcommand{\Coker}{\normalfont\text{Coker}}
\newcommand{\Quot}{{\normalfont\text{Quot}}}
\newcommand{\Fib}{{\normalfont\text{Fib}}}
\newcommand{\IM}{\normalfont\text{Im}}
\newcommand{\Hom}{{\normalfont\text{Hom}}}
\newcommand{\grHom}{{}^*\Hom}
\newcommand{\Fitt}{\normalfont\text{Fitt}}
\newcommand{\OO}{\mathcal{O}}
\newcommand{\FF}{\mathcal{F}}
\newcommand{\HL}{\normalfont\text{H}_{\mm}}
\newcommand{\HH}{\normalfont\text{H}}
\newcommand{\bBB}{\normalfont\text{B}}
\newcommand{\zZZ}{\normalfont\text{Z}}
\newcommand{\cCC}{\normalfont\text{C}}
\newcommand{\iniTerm}{\normalfont\text{in}}
\newcommand{\Proj}{\normalfont\text{Proj}}
\newcommand{\Spec}{{\normalfont\text{Spec}}}
\newcommand{\ACM}{{\normalfont\text{ACM}}}
\newcommand{\AG}{{\normalfont\text{AG}}}
\newcommand{\FExt}{{\normalfont\text{FExt}}}
\newcommand{\FDir}{{\normalfont\text{FDir}}}
\newcommand{\FLoc}{{\normalfont\text{FLoc}}}
\newcommand{\Hilb}{\normalfont\text{Hilb}}
\newcommand{\GL}{\normalfont\text{GL}}
\newtheorem{theorem}{Theorem}[section]
\newtheorem{headthm}{Theorem}
\newaliascnt{headcor}{headthm}
\newaliascnt{headconj}{headthm}
\newaliascnt{corollary}{theorem}
\newtheorem{corollary}[corollary]{Corollary}
\newaliascnt{claim}{theorem}
\newaliascnt{lemma}{theorem}
\newtheorem{lemma}[lemma]{Lemma}
\newaliascnt{conjecture}{theorem}
\newaliascnt{proposition}{theorem}
\newtheorem{proposition}[proposition]{Proposition}
\theoremstyle{definition}
\newaliascnt{definition}{theorem}
\newtheorem{definition}[definition]{Definition}
\newaliascnt{notation}{theorem}
\newtheorem{notation}[notation]{Notation}
\newaliascnt{example}{theorem}
\newtheorem{example}[example]{Example}
\newaliascnt{examples}{theorem}
\newaliascnt{remark}{theorem}
\newtheorem{remark}[remark]{Remark}
\newaliascnt{question}{theorem}
\newaliascnt{questions}{theorem}
\newaliascnt{problem}{theorem}
\newaliascnt{construction}{theorem}
\newaliascnt{setup}{theorem}
\newtheorem{setup}[setup]{Setup}
\newaliascnt{algorithm}{theorem}
\newaliascnt{observation}{theorem}
\newaliascnt{defprop}{theorem}
\DeclareFontFamily{OT1}{pzc}{}
\DeclareFontShape{OT1}{pzc}{m}{it}{<-> s * [1.100] pzcmi7t}{}
\DeclareMathAlphabet{\mathchanc}{OT1}{pzc}{m}{it}
\DeclareMathOperator{\fFib}{\mathchanc{Fib}}
\DeclareMathOperator{\fHilb}{\mathchanc{Hilb}}
\DeclareMathOperator{\fQuot}{\mathchanc{Quot}}
\DeclareMathOperator{\fF}{\mathchanc{F}}
\DeclareMathOperator{\fFExt}{\mathchanc{FExt}}
\DeclareMathOperator{\fFLoc}{\mathchanc{FLoc}}
\DeclareMathOperator{\fFDir}{\mathchanc{FDir}}
\DeclareMathOperator{\fACM}{\mathchanc{ACM}}
\DeclareMathOperator{\fAG}{\mathchanc{AG}}
\def\equationautorefname~#1\null{(#1)\null}
\def\sectionautorefname~#1\null{Section #1\null}
\def\subsectionautorefname~#1\null{\S #1\null}
\newcommand{\ritvik}[1]{{\color{purple} \sf $\clubsuit$ Ritvik: [#1]}}
\begin{document}

	\title{The fiber-full scheme}
	
	\author{Yairon Cid-Ruiz}
	\address{Department of Mathematics, North Carolina State University, Raleigh, NC, 27695, USA}
	\email{ycidrui@ncsu.edu}
	
	\author{Ritvik Ramkumar}
	\address{Department of Mathematics, University of Notre Dame, South Bend, IN, 46556, USA}
	\email{rramkuma@nd.edu}
	
	\keywords{Hilbert scheme, Quot scheme, fiber-full scheme, fiber-full sheaf, flattening stratification, local cohomology, Ext module, arithmetically Cohen-Macaulay, arithmetically Gorenstein, square-free Gr\"obner degeneration.}
	\subjclass[2010]{14C05, 14D22, 13D02, 13D07, 13D45.}

	\date{\today}
	
	\begin{abstract}
		We introduce the fiber-full scheme which can be seen as the parameter space that generalizes the Hilbert and Quot schemes by controlling the entire cohomological data.
		Let $f:X \subset \PP_S^r \rightarrow S$ be a projective morphism and $\mathbf{h} = (h_0,\ldots,h_r) : \ZZ^{r+1} \rightarrow \NN^{r+1}$ be a fixed tuple of functions.
		The fiber-full scheme $\Fib_{\FF/X/S}^\mathbf{h}$ is a fine moduli space parametrizing all quotients $\GG$ of a fixed coherent sheaf $\FF$ on $X$  such that $R^i{{f}_*}\left(\GG(\nu)\right)$ is a locally free $\OO_S$-module of rank equal to $h_i(\nu)$.
		In other words, the fiber-full scheme controls the dimension of all cohomologies of all possible twistings, instead of just the Hilbert polynomial.
		We show that the fiber-full scheme is a quasi-projective $S$-scheme and a locally closed subscheme of its corresponding Quot scheme.
		In the context of applications, we demonstrate that the fiber-full scheme provides the natural parameter space for arithmetically Cohen-Macaulay and arithmetically Gorenstein schemes with fixed cohomological data, and for square-free Gr\"obner degenerations.
	\end{abstract}

	\maketitle

\section{Introduction}	 \label{section_introduction}

The Hilbert and Quot schemes, introduced by Grothendieck \cite{GROTHENDIECK_HILB}, are among the most important constructions in algebraic geometry. 
The Quot scheme $\Quot_{\FF/X/S}^P$ is a fine moduli space parametrizing all quotients of a fixed coherent sheaf $\FF$ on a projective morphism $f:X \subset \PP_S^r \rightarrow S$ that are flat over a base scheme $S$ and have a fixed Hilbert polynomial $P(m) \in \QQ[m]$ along all fibers; the Hilbert scheme is a special case with $\FF = \OO_X$.
These schemes can be used to construct other important moduli spaces such as the moduli of curves and the Picard scheme \cite{GIT, PICARD}. 
They are used to study deformations of curves which is an essential part of birational geometry \cite{MORI_AMPLE, KOLLAR_MORI}. 
The Hilbert schemes of points on a singular plane curve and a surface are related to knot invariants \cite{HILB_KNOT_1, HILB_KNOT_2}.
 The Hilbert scheme of points on a surface appears in representation theory \cite{NAKAJIMA}, in combinatorics \cite{HAIMAIN} and, if the surface is a K3-surface, this Hilbert scheme provides an important class of examples of hyperk\"ahler varieties \cite{BEAUVILLE}. 
The main goal of this paper is to introduce a far-reaching generalization of the Hilbert and Quot schemes that controls all the cohomological data of the quotients of the coherent sheaf $\FF$ instead of just the Hilbert polynomial.

\medskip

We start with the classical example of the Hilbert scheme compactification of the space of twisted cubics that was studied by  Piene and  Schlessinger \cite{TWISTED_CUBIC}.
The motivating example below shows how this well-studied Hilbert scheme decomposes into locally closed subschemes that have constant cohomological data.

\begin{example}[\autoref{TWISTED_CUBIC}]
	\label{exmp_intro}
	In \cite{TWISTED_CUBIC}, it was shown that $\Hilb^{3m+1}_{\PP^3_{\kk}} = H \cup H'$ is a union of two smooth irreducible components such that the general member of $H$ parametrizes a twisted cubic, and the general member of $H'$ parametrizes a plane cubic union an isolated point. It is also known that $H - H \cap H'$ is the locus of arithmetically Cohen-Macaulay curves of degree $3$ and genus $0$.
	We then have a decomposition 
	$$
	\Hilb^{3m+1}_{\PP^3_{\kk}}  =  (H - H \cap H') \sqcup H'.
	$$
	Furthermore, one can show that the functions
	$$
	h_X^i : \ZZ \rightarrow \NN, \quad \nu \mapsto \dim_{\kk}\left(\HH^i(X, \OO_X(\nu))\right)
	$$
	are the same for any element $[X] \in H - H \cap H'$ and the same for any element $[X] \in H'$ (for an explicit computation, see \autoref{TWISTED_CUBIC}).
	It then follows that $\Hilb^{3m+1}_{\PP^3_{\kk}}$ can be decomposed into two locally closed subschemes where the cohomological functions 	$h_X^i$
	are constant.
	It should also be noted that one might be quite interested in studying $H - H \cap H'$ as it gives all the closed subschemes of $\PP_\kk^3$ with the same cohomological data as that of a twisted cubic.
\end{example}

As presented below, 
the scheme we introduce allows us to provide a unified and systematic treatment of the decomposition seen in \autoref{exmp_intro}.
Let $S$ be a locally Noetherian scheme, $f : X \subset \PP_S^r \rightarrow S$ be a projective morphism and $\FF$ be a coherent sheaf on $X$.
We follow Grothendieck's  general idea \cite{GROTHENDIECK_HILB} of considering a contravariant functor whose representing scheme (if it exists) is the parameter space one is interested in.

\medskip

We define the \emph{fiber-full functor} which for an $S$-scheme $T$ parametrizes all coherent quotients $\FF_T \twoheadrightarrow \GG$ such that all the higher direct images of $\GG$ and its twistings are locally free over $T$.
More precisely, for any (locally Noetherian) $S$-scheme $T$ we define
$$
\fFib_{\FF/X/S}(T) \;=\; \left\lbrace \text{coherent quotient $\FF_T \twoheadrightarrow \GG$} \;
\begin{array}{|l}
	R^i{{f_{(T)}}_*}\left(\GG(\nu)\right) \text{ is locally free over $T$}\\
	\text{for all $0 \le i \le r, \nu \in \ZZ$}
\end{array}
\right\rbrace,
$$
where $\FF_T$ is the pull-back sheaf on $X_T = X \times_S T$, $f_{(T)} : X_T \subset \PP_T ^r \rightarrow T$ is the base change morphism $f_{(T)} = f \times_S T$, and $\GG(\nu) = \GG \otimes \big(\OO_{\PP_T^r}(1)\big)^{\otimes \nu}$.
We have that 
$$
\fFib_{\FF/X/S} \;: \; (\text{Sch}/S)^\text{opp} \rightarrow (\text{Sets})
$$ 
is a contravariant functor from the category of (locally Noetherian) $S$-schemes to the category of sets (see \autoref{lem_base_change_Fib}).
We stratify this functor in terms of ``Hilbert functions'' for all the cohomologies.
Let $\mathbf{h} = (h_0,\ldots,h_r) : \ZZ^{r+1} \rightarrow \NN^{r+1}$ be a tuple of functions.
Then we define the following functor depending on $\mathbf{h}$: 
$$
\fFib^{\mathbf{h}}_{\FF/X/S}(T) \;=\; 
\left\lbrace \GG \in \fFib_{\FF/X/S}(T) \;\;
\begin{array}{|l}
	\text{$\dim_{\kappa(t)}\left(\HH^i\left(X_t,\GG_t(\nu)\right)\right) = h_i(\nu)$}\\
	\text{for all $0 \le i \le r, \nu \in \ZZ, t \in T$}
\end{array}
\right\rbrace,
$$
where $\kappa(t)$ denotes the residue field of the point $t \in T$, $X_t = X_T \times_T \Spec(\kappa(t))$ is the fiber over $t \in T$, and $\GG_t$ is the pull-back sheaf on $X_t$.
The idea of this functor is to measure the dimension of \emph{all cohomologies of all possible twistings}. 
 We easily obtain the  stratification 
 $
 \fFib_{\FF/X/S}(T) \;=\; \bigsqcup_{\mathbf{h} : \ZZ^{r+1} \rightarrow \NN^{r+1}} \fFib^{\mathbf{h}}_{\FF/X/S}(T)
 $
 when $T$ is connected, and so it follows that $\fFib_{\FF/X/S}(T)$ is a representable functor if all the functors $\fFib^{\mathbf{h}}_{\FF/X/S}(T)$ are representable.
 When $\FF = \OO_X$, we simplify the notation by writing $\fFib^{\mathbf{h}}_{X/S}$ instead of $\fFib^{\mathbf{h}}_{\OO_X/X/S}$.
 
 \medskip
 
 For any numerical polynomial $P \in \QQ[m]$,  we have Grothendieck's definition of the Quot functor 
 $$
 \fQuot_{\FF/X/S}^P : (\text{Sch}/S)^\text{opp} \rightarrow (\text{Sets})
 $$ which for an $S$-scheme $T$ parametrizes all coherent quotients $\FF_T \twoheadrightarrow \GG$ that are flat over $T$ and have Hilbert polynomial equal to $P$ along all fibers.
 The Hilbert functor $\fHilb_{\FF/X/S}^P$ is the special case of $\fQuot_{\FF/X/S}^P$ with $\FF = \OO_X$.
 Then the fiber-full functor can be thought of as a refinement of the Hilbert and Quot functors due to the following inclusions.
From the tuple of functions $\mathbf{h} = (h_0,\ldots,h_r) : \ZZ^{r+1} \rightarrow \NN^{r+1}$, we define the function $P_\mathbf{h} = \sum_{i=0}^{r} (-1)^i h_i$. 
When $P_\mathbf{h} \in \QQ[m]$ is a numerical polynomial, since the Hilbert polynomial of a sheaf coincides with its Euler characteristic, we automatically get the inclusions 
$$
\fFib^{\mathbf{h}}_{X/S}(T) \; \subset \; \fHilb^{P_\mathbf{h}}_{X/S}(T) \quad \text{ and } \quad \fFib^{\mathbf{h}}_{\FF/X/S}(T) \; \subset \; \fQuot^{P_\mathbf{h}}_{\FF/X/S}(T)
$$
for any (locally Noetherian) $S$-scheme $T$.
In fact, $\fFib^{\mathbf{h}}_{\FF/X/S}$ is a locally closed subfunctor of $\fQuot^{P_\mathbf{h}}_{\FF/X/S}$ (see \autoref{rem_subfunctor}).
If $P_\mathbf{h}$ is not a numerical polynomial, then $\fFib^{\mathbf{h}}_{\FF/X/S}(T)=\emptyset$ for any $S$-scheme $T$.

\medskip

The following is the main theorem of this article.
  
\begin{headthm}[\autoref{thm_main}]
	\label{mainthmA}
	Let $S$ be a locally Noetherian scheme, $f : X \subset \PP_S^r \rightarrow S$ be a projective morphism and $\FF$ be a coherent sheaf on $X$.
	Let $\mathbf{h} = (h_0,\ldots,h_r) : \ZZ^{r+1} \rightarrow \NN^{r+1}$ be a tuple of functions and suppose that $P_\mathbf{h} \in \QQ[m]$ is a numerical polynomial.
	Then there is a quasi-projective $S$-scheme $\Fib^{\mathbf{h}}_{\FF/X/S}$ that represents the functor $\fFib^{\mathbf{h}}_{\FF/X/S}$ and that is a locally closed subscheme of the Quot scheme $\Quot^{P_\mathbf{h}}_{\FF/X/S}$.
\end{headthm}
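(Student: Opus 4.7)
The plan is to construct $\Fib^{\mathbf{h}}_{\FF/X/S}$ as a locally closed subscheme of Grothendieck's Quot scheme $U := \Quot^{P_\mathbf{h}}_{\FF/X/S}$, equipped with its universal quotient $\FF_U \twoheadrightarrow \GG_U$ on $X_U = X \times_S U$. For every pair $(i,\nu)$, the combined cohomology-and-base-change theorem (semi-continuity together with Grauert) tells us that the subset
$$U_{i,\nu} \;:=\; \{u \in U : \dim_{\kappa(u)} \HH^i(X_u, (\GG_U)_u(\nu)) = h_i(\nu)\}$$
is a locally closed subscheme of $U$, and that on this locus $R^i (f_U)_*(\GG_U(\nu))$ becomes locally free of rank $h_i(\nu)$ and commutes with arbitrary base change. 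Each individual fiber-full condition therefore already cuts out a locally closed subscheme of the Quot scheme.

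The core difficulty is that $(i,\nu)$ ranges over an infinite set, so a naive scheme-theoretic intersection of all the $U_{i,\nu}$ need not be locally closed. I would reduce to a finite set of conditions by bounding the relevant range of $\nu$. For the upper end, a uniform Castelnuovo--Mumford regularity bound on the universal quotient over the (locally Noetherian) Quot scheme produces $\nu_+ \in \ZZ$ such that for every $\nu \ge \nu_+$ and every $u \in U$ one has $\HH^i(X_u, (\GG_U)_u(\nu)) = 0$ for $i \ge 1$ and $\HH^0(X_u, (\GG_U)_u(\nu)) = P_\mathbf{h}(\nu)$; the conditions for such $\nu$ are thus automatic, or they render the fiber-full scheme empty if $\mathbf{h}$ disagrees with these forced values. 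For the lower end, I would invoke Serre duality, which identifies $\HH^i(X_u, (\GG_U)_u(\nu))^\vee$ with a twisted $\Ext^{r-i}$-module whose twists run in the opposite direction, and then apply a uniform regularity bound on the relevant Ext modules --- equivalently, on the graded local cohomology of the sections module of $\GG_U$ --- to obtain $\nu_- \in \ZZ$ below which the conditions are likewise determined.

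Once the index set is reduced to the finite rectangle $I := \{(i,\nu) : 0 \le i \le r,\ \nu_- \le \nu \le \nu_+\}$, I would define
$$\Fib^{\mathbf{h}}_{\FF/X/S} \;:=\; \bigcap_{(i,\nu) \in I} U_{i,\nu}$$
as a scheme-theoretic intersection inside $U$. This is a locally closed subscheme of $U$, hence quasi-projective over $S$. The verification of representability is then routine: for any locally Noetherian $T \to S$ and any $T$-valued point of $U$, corresponding to a flat quotient $\FF_T \twoheadrightarrow \GG$, the requirement that each $R^i(f_T)_*(\GG(\nu))$ be locally free of rank $h_i(\nu)$ translates via Grauert into the pointwise equalities $\dim_{\kappa(t)} \HH^i(X_t, \GG_t(\nu)) = h_i(\nu)$, which is precisely the condition that $T \to U$ factor through every $U_{i,\nu}$ and, by the reduction step, through their finite intersection.

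The hard part of the argument will be the uniform bound on the negative side of $\nu$: classical regularity controls large positive twists uniformly over the Noetherian Quot scheme, but for $\nu \to -\infty$ one must ensure that graded Ext modules (equivalently, graded local cohomology of the sections module) behave uniformly across $U$. This is exactly the place where the tools announced in the paper's abstract --- Ext modules, local cohomology, and the notion of a fiber-full sheaf --- are to be deployed, turning an a priori infinite tower of cohomological conditions into a single finite locally closed locus inside $\Quot^{P_\mathbf{h}}_{\FF/X/S}$.
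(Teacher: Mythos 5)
Your high-level framing agrees with the paper: both cut out $\Fib^{\mathbf{h}}_{\FF/X/S}$ as a locally closed subscheme of $Q := \Quot^{P_\mathbf{h}}_{\FF/X/S}$ by imposing conditions on the universal quotient $\mathcal{W}$ and appealing to the projectivity of $Q$ for quasi-projectivity. But the technical heart --- how to make ``$R^i{{f_{(T)}}_*}(\mathcal{W}_T(\nu))$ locally free of rank $h_i(\nu)$ for all $0\le i\le r$ and all $\nu\in\ZZ$'' into a single locally closed condition --- is handled very differently, and your proposal has a genuine gap precisely there.

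Your route is: for each $(i,\nu)$ use the Grothendieck complex and Mumford's flattening stratification to produce a locally closed $U_{i,\nu}\subset Q$, and then argue that only finitely many $\nu$ are relevant, bounding above by Castelnuovo--Mumford regularity and below via Serre duality plus ``a uniform regularity bound on the relevant Ext modules.'' The positive bound $\nu_+$ is fine (a boundedness argument over the Noetherian scheme $Q$). The negative bound $\nu_-$ is where the proposal stops being a proof: $\Ext$ does not commute with base change, and the family $u\mapsto \Ext^{r-i}_{\PP^r_{\kappa(u)}}(\mathcal{W}_u,\OO(-\nu-r-1))$ is not obviously uniformly controlled across $Q$. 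Establishing the uniformity you need is essentially the content of the paper's flattening stratification theorem for Ext modules (\autoref{thm_flat_Ext}), so the ``hard part'' you flag is not a routine citation --- it is the theorem. As written, the proposal reduces to: ``this would follow from a flattening stratification for all higher direct images,'' which is the statement to be proved.

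The paper's route (\autoref{thm_flat_Sheaf}) avoids any finite-$\nu$ truncation entirely. After reducing to an affine Noetherian base, it replaces the sheaf by the truncated graded sections module $M=\bigoplus_{\nu\ge m}\HH^0(\PP^r_A,\FF(\nu))$, which is a single finitely generated flat graded $R$-module. All sheaf cohomologies in all twists are then simultaneously encoded as graded pieces of the local cohomology modules $\HL^i(M\otimes_A B)$, and via graded local duality (\autoref{prop_equiv_freeness}) as graded pieces of $\Ext^{r-i}_{R\otimes_A B}(M\otimes_A B, R\otimes_A B)$, which are computed from a \emph{single} bounded complex coming from a free resolution of $M$ (\autoref{thm_flat_Ext}, \autoref{thm_flattening_complex}). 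The locally closed condition is then expressed through Fitting ideals of the cokernels of that one complex (\autoref{thm_stratify_mod}), and Noetherianity of $A$ makes the a priori infinite sum of Fitting ideals finitely generated. So the infinitude in $\nu$ is absorbed into the finite generation of a single graded module, not removed by bounding $\nu$. If you want to pursue your version, you would in effect need to reprove \autoref{thm_flat_Ext} to justify the existence of $\nu_-$; it is cleaner to prove the stratification once for the sections module and never bound $\nu$ at all.
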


Our primary tool for constructing the fiber-full scheme is given in \autoref{thm_flat_Sheaf} where we provide a flattening stratification theorem that deals with all the direct images of a sheaf and its possible twistings. 
To prove this technical theorem we utilize some techniques previously developed in the papers \cite{GEN_FREENESS_LOC_COHOM,FIBER_FULL}.
In a related direction, we also introduce the notion of fiber-full sheaves and we give three equivalent definitions in \autoref{thm_fib_full_sheaf}. 
Under the above notation, we say that $\FF$ is a \emph{fiber-full sheaf over $S$} if $R^i{{f}_*}\left(\FF(\nu)\right)$ is locally free over $S$ for all $0 \le i \le r$ and $\nu \in \ZZ$.
Fiber-full sheaves serve as a sheaf-theoretic extension of the notions of \emph{algebras having liftable local cohomology}  \cite{KOLLAR_KOVACS}  and \emph{cohomologically full rings} \cite{COHOM_FULL_RINGS}.

\medskip
It turns out there has been previous interest in stratifying the Hilbert scheme in terms of the whole cohomological data:
\begin{itemize}[--]
	\item In the work of Martin-Deschamps and Perrin \cite{MARTIN-DESCHAMPS_PERRIN}, they were able to control the cohomologies of a sheaf, but not all the possible twistings, as their method would yield the intersection of infinitely many (not necessarily closed) subschemes (see \cite[Chapitre VI, Proposition 1.9 and Corollaire 1.10]{MARTIN-DESCHAMPS_PERRIN}); their approach is based on classical techniques related to the Grothendieck complex which are covered, e.g., in \cite[\S III.12]{HARTSHORNE}.

	\item In the thesis of Fumasoli \cite{FUMASOLI_PAPER, FUMASOLI_THESIS}, he stratified the Hilbert scheme by bounding below the cohomological functions of the points of the Hilbert scheme, which is a consequence of the classical upper semicontinuity theorem (see \cite[Theorem III.12.8]{HARTSHORNE}).	
\end{itemize}

\noindent
Other fine moduli spaces generalizing the Hilbert scheme have been constructed by Haiman and Sturmfels \cite{HS} and by Artin and Zhang \cite{AZ}.

Our main result \autoref{mainthmA} vastly generalizes the two aforementioned approaches and shows that one can indeed stratify the Hilbert and Quot schemes by taking into account all the cohomological data.
In this regard,  one important part of our work is to develop the necessary tools that allow us to prove the general stratification result of \autoref{thm_flat_Sheaf}.

%

\smallskip

There is a large literature on the study of the loci of \emph{arithmetically Cohen-Macaulay} (ACM for short) schemes and the loci of \emph{arithmetically Gorenstein} (AG for short) schemes within the Hilbert scheme (see \cite{KLEPPE_DETERMINANTAL, KLEPPE_MAXIMAL, KLEPPE_MIRO_GOR, HARTSHORNE_SABADINI_SCHLESINGER, MARTIN-DESCHAMPS_PERRIN, ELLINGSRUD} and the references therein). 
As a result of considering the fiber-full scheme, we can provide a finer description of these loci and parametrize ACM and AG schemes with a fixed cohomological data.
Let $0 \le d \le r$ and $h_0, h_d : \ZZ \rightarrow \NN$ be two functions, and consider the tuple of functions $\mathbf{h} : \ZZ^{r+1} \rightarrow \NN^{r+1}$ given by $\mathbf{h} = (h_0, 0,\ldots,0,h_d,0,\ldots,0)$ where $0 : \ZZ \rightarrow \NN$ denotes the zero function.
To study ACM and AG schemes, since all the intermediate cohomologies vanish in these cases, it becomes natural to consider the following two functors. 
For any (locally Noetherian) $S$-scheme $T$, we have 
$$
\fACM_{X/S}^{h_0,h_d}(T) \;= \;  \left\lbrace \text{closed subscheme } Z \subset X_T \;
\begin{array}{|l}
	Z \in \fFib^{\mathbf{h}}_{X/S}(T) \text{ and $Z_t$ is  ACM for all $t\in T$} 
\end{array}
\right\rbrace
$$
and
$$
\fAG_{X/S}^{h_0,h_d}(T) \;= \;  \left\lbrace \text{closed subscheme } Z \subset X_T \;
\begin{array}{|l}
	Z \in \fFib^{\mathbf{h}}_{X/S}(T) \text{ and $Z_t$ is  AG for all $t\in T$} 
\end{array}
\right\rbrace.
$$

The following theorem shows the two functors above are representable, and so it provides the natural parameter spaces for ACM and AG schemes with fixed cohomological data.

\begin{headthm}[\autoref{thm_ACM}]
	Let $S$ be a locally Noetherian scheme and $f : X \subset \PP_S^r \rightarrow S$ be a projective morphism.
	Let $0 \le d \le r$ and $h_0, h_d : \ZZ \rightarrow \NN$ be two functions, and consider the tuple of functions $\mathbf{h}  = (h_0, 0,\ldots,0,h_d,0,\ldots,0): \ZZ^{r+1} \rightarrow \NN^{r+1}$.
	Suppose that $P_\mathbf{h} \in \QQ[m]$ is a numerical polynomial.
	Then there exist open $S$-subschemes $\ACM_{X/S}^{h_0,h_d}$ and $\AG_{X/S}^{h_0,h_d}$ of $\Fib^{\mathbf{h}}_{X/S}$ that represent the functors  $\fACM_{X/S}^{h_0,h_d}$ and $\fAG_{X/S}^{h_0,h_d}$, respectively.
\end{headthm}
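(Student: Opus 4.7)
The plan is to realize $\ACM_{X/S}^{h_0,h_d}$ and $\AG_{X/S}^{h_0,h_d}$ as open subschemes of the fiber-full scheme $\Fib^{\mathbf{h}}_{X/S}$ from \autoref{mainthmA}. Since open subfunctors of representable functors are represented by open subschemes, it suffices to prove that $\fACM_{X/S}^{h_0,h_d}$ and $\fAG_{X/S}^{h_0,h_d}$ are open subfunctors of $\fFib^{\mathbf{h}}_{X/S}$. Concretely, given any locally Noetherian $S$-scheme $T$ and a family $Z\subset X_T$ classifying a $T$-point of $\fFib^{\mathbf{h}}_{X/S}$, the task is to show that
\[
T_{\ACM} \;=\; \{\,t\in T : Z_t \text{ is ACM}\,\} \qquad\text{and}\qquad T_{\AG} \;=\; \{\,t\in T : Z_t \text{ is AG}\,\}
\]
are open in $T$; the theorem then follows by specializing to the universal family over $\Fib^{\mathbf{h}}_{X/S}$.

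The first step is to reduce the global arithmetic conditions on the homogeneous coordinate ring $A_t = \kappa(t)[x_0,\dots,x_r]/I_{Z_t}$ to pointwise local conditions on the fiber. Because $\mathbf{h}$ vanishes outside positions $0$ and $d$, every $Z_t$ in $\fFib^{\mathbf{h}}_{X/S}$ automatically satisfies $\HH^i(Z_t,\OO_{Z_t}(\nu))=0$ for $i\notin\{0,d\}$ and has dimension $d$. The standard exact sequence relating graded local cohomology of $A_t$ to sheaf cohomology on $\PP^r$ forces $\HH^i_{\mm}(A_t)=0$ for $2\le i \le d$, which together with a Castelnuovo--Mumford regularity bound uniform on $\Fib^{\mathbf{h}}_{X/S}$ lets one reformulate the vanishing of the remaining $\HH^1_{\mm}(A_t)$ as the surjectivity in finitely many degrees of maps of coherent sheaves on the base. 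Thus being ACM becomes the combination of an open condition on $\Fib^{\mathbf{h}}_{X/S}$ and the local Cohen--Macaulay property of $\OO_{Z_t}$. The AG case is analogous, with Cohen--Macaulay replaced by Gorenstein and the required symmetry between $h_0$ and $h_d$ either automatic from $\mathbf{h}$ or forcing $\fAG_{X/S}^{h_0,h_d}$ to be empty.

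The second step applies classical openness: in a flat projective morphism, the loci of fibers that are Cohen--Macaulay (resp.\ Gorenstein) are open on the base. Applied to the universal family on $X\times_S\Fib^{\mathbf{h}}_{X/S}$, which is flat thanks to \autoref{mainthmA} and the inclusion $\Fib^{\mathbf{h}}_{\FF/X/S}\subset \Quot^{P_\mathbf{h}}_{\FF/X/S}$, this yields open subsets of $\Fib^{\mathbf{h}}_{X/S}$ over which the fibers are locally Cohen--Macaulay (resp.\ locally Gorenstein). Intersected with the open subset produced in Step~1, they are exactly $\ACM_{X/S}^{h_0,h_d}$ and $\AG_{X/S}^{h_0,h_d}$.

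The main obstacle I anticipate is the bookkeeping in Step~1: the graded Hilbert function of $A_t$ is not a priori constant on $\Fib^{\mathbf{h}}_{X/S}$, so one must carefully combine the fiber-full hypothesis with a uniform regularity bound to recast the vanishing of $\HH^1_{\mm}(A_t)$ as a finite collection of open conditions on the base. Once this is in place, the EGA-style openness theorems for the local CM and Gorenstein loci take over and deliver the conclusion.
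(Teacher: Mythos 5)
Your treatment of the ACM case is essentially sound, though more circuitous than the paper's: the fiber-full conditions (the vanishing $h_i \equiv 0$ for $i\notin\{0,d\}$ plus working with the saturated ideal) already kill $\HL^i(R_{Z,t})$ for all $0\le i\le d$ except $i=1$, and the paper just observes directly that the locus $\{\,f\in F : \HL^1(R/I\otimes_A\kappa(f))=0\,\}$ is open; your additional invocation of the classical openness of the \emph{local} CM locus is redundant (ACM implies it) but logically harmless.

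The AG case, however, has a genuine gap. You claim that the locally Gorenstein locus, intersected with the ACM opens of Step~1, \emph{is exactly} $\AG_{X/S}^{h_0,h_d}$. This is false: being arithmetically Gorenstein is a strictly stronger condition than being ACM and locally Gorenstein, even when the graded Hilbert function has the symmetry imposed by $\mathbf{h}$. A concrete counterexample is the twisted cubic $C\subset\PP^3$: it is smooth (hence locally Gorenstein), arithmetically Cohen--Macaulay, and sits inside the corresponding fiber-full scheme, but $\omega_C=\OO_{\PP^1}(-2)$ is \emph{not} a twist of $\OO_C=\OO_{\PP^1}(3)$-polarized structure, so $R_C$ is not Gorenstein. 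The ``symmetry of $\mathbf{h}$'' you invoke does not salvage this — having the Hilbert series of the canonical module match that of a twist of $R_{Z,t}$ does not force an isomorphism of modules. The paper instead uses the graded characterization that a CM ring $R_{Z,t}$ of dimension $d+1$ is Gorenstein iff the Bass number $\mu_{d+1}(R_{Z,t})=\dim_{\kappa(t)}\Ext^{d+1}_{R_{Z,t}}(R_{Z,t}/\mm R_{Z,t},R_{Z,t})$ equals one; since $\mu_{d+1}$ is upper semicontinuous and $\ge 1$ on the ACM locus, the set $\{\mu_{d+1}\le 1\}$ is an open carving out exactly $\AG_{X/S}^{h_0,h_d}$. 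You need a global arithmetic criterion of this type (Bass number, or freeness of the graded canonical module), not the local Gorenstein property of the fibers.
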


\medskip

We study several examples of Hilbert schemes that we stratify in terms of fiber-full schemes.
The list includes the Hilbert scheme of points and classical examples such as the Hilbert scheme of twisted cubics and the Hilbert scheme of skew lines (see \autoref{section_examples_1}).
Furthermore, by using the recent classification of Skjelnes and Smith \cite{SMOOTH_HILB}, we show in \autoref{prop_smooth_Hilb} that smooth Hilbert schemes coincide with a fiber-full scheme (i.e., cohomological data is constant for points in a smooth Hilbert scheme).

\medskip

On top of being a generalization of the Hilbert scheme, the fiber-full scheme is also the natural parameter space for \textit{square-free} Gr\"obner degenerations, an important class of degenerations in algebraic geometry and commutative algebra.

To be more explicit, let $\kk$ be a field, $S = \kk[x_0,\ldots,x_r]$ be a polynomial ring and $>$ be a monomial order on $S$.
Given a closed subscheme $V(I) \subseteq \PP^r_\kk$, a Gr\"obner degeneration is a flat family over $\Spec(\kk[t])$ with general fiber isomorphic to $\Proj(S/I)$ and special fiber isomorphic to $\Proj(S/\text{in}_>(I))$. In particular, the natural parameter space to study Gr\"obner degenerations is the Hilbert scheme of projective space, and this point of view has been very fruitful in the study of these Hilbert schemes. For example, one proves the connectedness of Hilbert schemes, and more generally Quot schemes, by proving that any two points are connected by a chain of rational curves, each of which is a Gr\"obner degeneration \cite{HARTSHORNE_CONNECTEDNESS, PARDUE}.
Square-free Gr\"obner degenerations, those degenerations where $\text{in}_>(I)$ is square-free, have always been an important subclass of Gr\"obner degenerations, especially since they are amenable to combinatorial techniques (Stanley-Reisner theory). 
By the recent work of Conca and Varbaro \cite{CONCA_VARBARO} the cohomological data is constant along the fibers of a square-free Gr\"obner degeneration. 
Thus, the fiber-full scheme is the natural parameter space to study square-free Gr\"obner degenerations, as evidenced by \autoref{cor_sqr_free_curve}.
To phrase this in another way, when the special fiber of a Gr\"obner degeneration is square-free, the whole degeneration lies inside the same fiber-full scheme.

\medskip

Our construction of the fiber-full scheme generates many interesting questions on the structure and geometry of these schemes. 
One that is particularly interesting, and which we plan to address in a forthcoming paper, is to construct a \textit{reasonable} modular compactification of the fiber-full scheme. 
Unlike the Hilbert scheme, fixing the cohomological data, in many cases, ensures that the fiber-full scheme does not contain extraneous components. 
Ideally, one would desire it to be the closure of the fiber-full scheme inside the Hilbert scheme. 
Not only would this have quite a few enumerative applications, it would also provide a modular compactification for various interesting loci such as arithmetically Cohen-Macaulay schemes (cf.~\cite{CM_TWISTED_CUBIC , HONSEN_THESIS}), arithmetically Gorenstein schemes, twisted quartic curves and so on.

\medskip

In a subsequent paper, we did a local study of the fiber-full scheme, which includes determining a tangent obstruction theory.
As a consequence, we found a new sufficient criterion for a flat degeneration to preserve cohomology. 
For more details, see \cite{LOCAL_FIB_FULL}.

\medskip


\medskip

\noindent{\bf Organization.}
In \autoref{section_strat_module} and \autoref{section_strat_sheaf}, we provide flattening stratifications for various modules and complexes; the most important being the local cohomology modules (\autoref{thm_flat_Loc}) and the higher direct image sheaves (\autoref{thm_flat_Sheaf}). 
In \autoref{section_fiber_full_sheaf}, we develop the notion of a fiber-full sheaf and in \autoref{section_fiber_full_scheme} we construct the fiber-full scheme (\autoref{thm_main}). 
In \autoref{section_examples_1} and \autoref{section_examples_2}, we provide several examples of fiber-full schemes. 
Finally, in \autoref{section_grobner}, we consider square-free Gr\"obner degenerations and we relate them to the fiber-full scheme.

\section{Some flattening stratification theorems in a graded category of modules} 	 \label{section_strat_module}
	
In this section, we provide  flattening stratification theorems for graded modules, cohomology of complexes of modules, Ext modules, and local cohomology modules.

\subsection{Flattening stratification of modules}	

In this subsection, we concentrate on an extension for modules of the  flattening stratification theorem given in \cite{HOM_HILB_SCH} (also, see \cite[\S 8]{MUMFORD_LECTURES}).
Throughout this subsection, we shall use the following setup. 

\begin{setup}
	\label{setup_graded_flattenings}
Let $A$ be ring (always assumed to be commutative and unitary) and $R$ be a finitely generated graded $A$-algebra.
For any $\pp \in \Spec(A)$, let $\kappa(\pp) := A_\pp/\pp A_\pp$ be the residue field of $\pp$.
\end{setup}

For a graded $R$-module $M$, we say that $M$ has a \emph{Hilbert function over $A$} if for all $\nu \in \ZZ$ the graded part $[M]_\nu$ is a finitely generated locally free $A$-module of constant rank on $\Spec(A)$; and in this case, the Hilbert function is $h_M : \ZZ \rightarrow \NN, \; h_M(\nu) = \rank_A\left([M]_\nu\right)$.
If a graded $R$-module $M$ has a Hilbert function over $A$, then $M \otimes_{A} B$ has the same Hilbert function over any $A$-algebra $B$.

\begin{remark}
	\label{rem_Hilb_funct_short_ex_seq}
	Let $0 \rightarrow L \rightarrow M \rightarrow N \rightarrow 0$ be a short exact sequence of graded $R$-modules.
	\begin{enumerate}[(i)]
		\item If $L$ and $N$ have Hilbert functions over $A$, then $M$ has a Hilbert function over $A$ given by $h_M(\nu) = h_L(\nu) + h_N(\nu)$.
		\item If $M$ and $N$ have Hilbert functions over $A$, then $L$ has a Hilbert function over $A$ given by $h_L(\nu) = h_M(\nu) - h_N(\nu)$.
	\end{enumerate}
	See \cite[\S 3.1]{ROTMAN}.
\end{remark}

For completeness, we recall the following flatness result.

\begin{lemma}
	\label{lem_locus_flat}
	Assume that the ring $A$ is Noetherian.
	For any finitely
	generated graded $R$-module $M$, the locus
	$$
	U_M := \big\lbrace \pp \in \Spec(A) \mid M \otimes_{A} A_\pp \text{ is a flat $A_\pp$-module}  \big\rbrace
	$$
	is an open subset of $\Spec(A)$.
\end{lemma}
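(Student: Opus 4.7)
The plan is to use the standard equivalence between flatness and local freeness for finitely presented modules over a Noetherian ring, and to exhibit an explicit open neighborhood inside $U_M$ around each of its points. Since $A$ is Noetherian and $M$ is finitely generated, $M$ is finitely presented; consequently $M_\pp$ is $A_\pp$-flat if and only if $M_\pp$ is a free $A_\pp$-module, so $U_M$ coincides with the locally free locus of $M$ on $\Spec(A)$.

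Fixing a point $\pp \in U_M$, let $r$ be the rank of $M_\pp$ over $A_\pp$. I would lift an $A_\pp$-basis of $M_\pp$ to global elements $m_1,\ldots,m_r \in M$, producing an $A$-linear map $\phi \colon A^r \to M$ that becomes an isomorphism after localization at $\pp$. Since $\Coker(\phi)$ is a finitely generated $A$-module with $\Coker(\phi)_\pp = 0$, its support is a closed subset of $\Spec(A)$ avoiding $\pp$, so $\phi$ is surjective on some open neighborhood $V$ of $\pp$. Likewise, $\Ker(\phi)$ is finitely generated (because $M$ is finitely presented) and vanishes at $\pp$, hence vanishes on some smaller open $V' \subseteq V$ containing $\pp$. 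On $V'$ the map $\phi$ restricts to an isomorphism, so $M|_{V'}$ is free of rank $r$; in particular $V' \subseteq U_M$, and $U_M$ is open.

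The argument is essentially a direct application of Nakayama's lemma together with the closedness of supports of finitely generated modules, so I do not anticipate any substantive obstacle. The only interpretive subtlety concerns what ``finitely generated graded $A$-module'' should mean in the context of \autoref{setup_graded_flattenings}: if $M$ is only finitely generated as a graded $R$-module (so each graded piece $[M]_\nu$ is finitely generated over $A$ but $M$ itself is not), then one must additionally control infinitely many degrees. In that case the proof can be adapted by combining the argument above with the graded flattening stratification of \S 8 of Mumford's lectures, which bundles the infinitely many Hilbert-function conditions into finitely many locally closed strata of $\Spec(A)$; on each stratum the Hilbert function is constant and the direct argument above applies graded piece by graded piece.
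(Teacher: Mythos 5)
The paper's own ``proof'' of this lemma is just a citation to two references, so your job is genuinely to supply an argument. Your first paragraph is a correct, standard argument \emph{for the literal reading} of the statement, namely that $M$ is finitely generated over $A$ itself. However, that reading cannot be the intended one: in the proof of Theorem~2.4 (\autoref{thm_stratify_mod}) the lemma is invoked for a finitely generated graded $R$-module $M$, where $R$ is the finitely generated graded $A$-algebra from \autoref{setup_graded_flattenings}. Such an $M$ is generally \emph{not} finitely generated over $A$ — it has infinitely many nonzero graded pieces — so your Nakayama/support argument, which produces one open neighborhood per graded piece, only shows that each $U_{[M]_\nu}$ is open. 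The locus $U_M = \bigcap_\nu U_{[M]_\nu}$ is an \emph{infinite} intersection of open sets, and an infinite intersection of opens need not be open even in a Noetherian space (e.g.\ removing the closed points of $\Spec(\ZZ)$ one at a time). So for the statement the paper actually uses, there is a real gap.

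You do flag this subtlety in your final paragraph, which is good, but the proposed fix does not close the gap. Citing Mumford's flattening stratification produces finitely many \emph{locally closed} strata on which the fiberwise Hilbert function is constant; it does not by itself tell you that $U_M$ contains an open neighborhood of each of its points, because an open set containing a point $\pp \in U_M$ can leave the stratum of $\pp$ and enter strata where flatness fails. Moreover, in this paper the flattening stratification (\autoref{thm_stratify_mod}) is proved \emph{using} this lemma, so leaning on an external stratification theorem here obscures what the actual ingredient is. The missing idea is different: apply the openness of the flat locus on the \emph{source} (i.e.\ over $\Spec(R)$; this is EGA IV 11.3.1, or \cite[\href{https://stacks.math.columbia.edu/tag/0398}{Tag 0398}]{stacks-project}, for a finitely presented module over a finitely presented algebra over a Noetherian base), so the non-flat locus $W \subset \Spec(R)$ is closed. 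Because $M$ is graded, $W$ is $\mathbb{G}_m$-stable, hence $W = V(J)$ for a graded ideal $J$, and one checks directly that the image of $V(J)$ in $\Spec(A)$ equals the closed set $V([J]_0)$. Combining this with the observation that $\pp \in U_M$ if and only if no $\qqq \in \Spec(R)$ with $\qqq \cap A \subseteq \pp$ lies in $W$, and that the closed set $V([J]_0)$ is stable under specialization, gives $U_M = \Spec(A) \setminus V([J]_0)$, which is open. That $\mathbb{G}_m$-stability/properness step — not the stratification — is where the graded hypothesis earns its keep, and it is the ingredient your sketch is missing.
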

\begin{proof}
	For a proof, see \cite[Lemma 2.1]{HOM_HILB_SCH} or \cite[Lemma 2.5]{GEN_FREENESS_LOC_COHOM}.
\end{proof}

For a given graded $R$-module $M$ and a function $h : \ZZ \rightarrow \NN$, we consider the following functor from $A$-algebras to sets.  For any (Noetherian) $A$-algebra $B$,
$$
\fF_M^h(B) \;:=\;  \left\lbrace \text{morphism } \Spec(B) \rightarrow \Spec(A) \;
\begin{array}{|l}
	\left[M \otimes_{A} B\right]_\nu \text{ is a locally free $B$-module}\\
	\text{of rank $h(\nu)$ for all $\nu \in \ZZ$}
\end{array}
\right\rbrace.
$$
We  now describe our first flattening stratification theorem. 	
	
\begin{theorem}
	\label{thm_stratify_mod}
	Assume that $A$ is Noetherian.
	Let $M$ be a finitely generated graded $R$-module and $h : \ZZ \rightarrow \NN$ be a function.
	Then the following statements hold:
	\begin{enumerate}[\rm (i)]
		\item The functor $\fF_M^h$ is  represented by a locally closed subscheme $F_M^h \subset \Spec(A)$.
		In other words, for any morphism $g: \Spec(B) \rightarrow \Spec(A)$, $M \otimes_{A} B$ has a Hilbert function over $B$ equal to $h$ if and only if $g$ can be factored as 
		$$
		\Spec(B)  \; \rightarrow \; F_M^h \; \rightarrow \; \Spec(A).
		$$
		\item There is only a finite number of different functions $h_1,\ldots,h_m : \ZZ \rightarrow \NN$ such that $F_M^{h_i} \neq \emptyset$, and so $\Spec(A)$ is set-theoretically equal to the disjoint union of the locally closed subschemes $F_{M}^{h_i}$.
	\end{enumerate}
\end{theorem}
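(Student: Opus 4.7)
The plan is to represent each single-degree condition as a locally closed subscheme of $\Spec(A)$, then use the Noetherianity of $A$ to reduce the infinite intersection of these conditions to a finite one. Part (ii) will be handled by a separate Noetherian induction combined with graded generic freeness.

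\emph{Part (i).} Since $R$ is a finitely generated graded $A$-algebra and $M$ is a finitely generated graded $R$-module, each graded piece $[M]_\nu$ is a finitely generated $A$-module, with $[M]_\nu = 0$ for $\nu$ sufficiently small. For each fixed $\nu$, the classical theory of flattening strata for a finitely generated module (cf.\ \cite{HOM_HILB_SCH}, \cite[\S 8]{MUMFORD_LECTURES}) shows that the subfunctor sending $B$ to the set of morphisms $\Spec(B) \to \Spec(A)$ making $[M \otimes_A B]_\nu$ locally free of rank $h(\nu)$ is represented by a locally closed subscheme $F_\nu = V(I_\nu)\cap U_\nu \subset \Spec(A)$, combining a closed Fitting-ideal condition for the rank with the open flatness locus from \autoref{lem_locus_flat}. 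The functor $\fF_M^h$ corresponds to $\bigcap_\nu F_\nu$, a priori an infinite intersection. By the Noetherianity of $A$, the increasing chain of ideal sums $\sum_{\nu \leq N} I_\nu$ (inside $A$) and the increasing chain of closed complements $\bigcup_{\nu \leq N}(\Spec(A)\setminus U_\nu)$ (inside the Noetherian space $\Spec(A)$) both stabilize for some $N$; consequently, the scheme-theoretic intersection $F_M^h := \bigcap_{\nu \leq N} F_\nu$ is contained in $F_\nu$ for \emph{every} $\nu$. A morphism $g:\Spec(B)\to\Spec(A)$ therefore factors through $F_M^h$ precisely when it factors through each $F_\nu$, i.e.\ when $g^*[M]_\nu$ is locally free of rank $h(\nu)$ for all $\nu$, so that $F_M^h$ represents $\fF_M^h$.

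\emph{Part (ii).} Proceed by Noetherian induction on closed subsets of $\Spec(A)$. For each minimal prime $\pp \subset A$, apply graded generic freeness to the graded $R/\pp R$-module $M\otimes_A A/\pp$: there exists nonzero $a\in A/\pp$ such that every graded piece $[M\otimes_A (A/\pp)[1/a]]_\nu$ is a free $(A/\pp)[1/a]$-module, so the Hilbert function $h_\pp$ is constant on this dense open subset of $V(\pp)$. The union over minimal primes covers an open dense subset of $\Spec(A)$, and its closed complement has strictly smaller dimension, permitting induction. This produces only finitely many distinct Hilbert functions $h_1,\dots,h_m$ with $F_M^{h_i}\neq\emptyset$; since each $\pp\in\Spec(A)$ has a well-defined Hilbert function and lies in $F_M^{h_i}$ exactly for the $i$ with $h_\pp = h_i$, the set-theoretic decomposition of $\Spec(A)$ as the disjoint union of the $F_M^{h_i}$ follows.

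\emph{Main obstacle.} The subtlest point is the use of Noetherianity in Part (i): one must know that the finite scheme-theoretic intersection $\bigcap_{\nu\leq N}F_\nu$ genuinely coincides with the full infinite intersection, so that factoring through $F_M^h$ imposes the graded condition in \emph{every} degree and not just in degrees $\leq N$. A secondary difficulty is the graded generic freeness used for (ii): one must ensure that a single denominator $a$ works uniformly across all graded pieces of $M$, which can be established via a prime filtration of $M$ together with Hilbert--Serre stabilization in large degree, or by invoking a graded version of generic freeness from the literature.
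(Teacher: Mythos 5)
Your Part~(i) argument has a genuine gap in the claim that both chains ``stabilize for some $N$.'' In a Noetherian topological space, \emph{ascending} chains of open subsets stabilize (equivalently, \emph{descending} chains of closed subsets) — that is the definition. But an ascending chain of \emph{closed} subsets, such as your $\bigcup_{\nu\leq N}(\Spec(A)\setminus U_\nu)$, need not stabilize: already in $\mathbb{A}^1_\kk$ the chain $\{0\}\subset\{0,1\}\subset\{0,1,2\}\subset\cdots$ is strictly increasing. Equivalently, the decreasing chain of opens $\bigcap_{\nu\leq N}U_\nu$ need not become stationary, so you cannot conclude that a finite intersection $\bigcap_{\nu\leq N}F_\nu$ controls the condition in \emph{every} degree. (Your ideal-sum stabilization $\sum_{\nu\leq N}I_\nu$ is fine — and actually superfluous, since an arbitrary sum of ideals is already a single ideal, which is how the paper builds the closed piece $Z_M^h$ in one step.)

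The paper's proof sidesteps the open-side problem entirely: rather than intersecting infinitely many per-degree opens, it shows directly that the locus
\[
U_M^h \;=\; \bigl\{\pp \in \Spec(A) \;:\; M\otimes_A A_\pp \text{ has Hilbert function } h \text{ over } A_\pp\bigr\}
\]
is open. This uses \autoref{lem_locus_flat} (the flatness locus $U_M$ is open) together with the fact that the fiber-dimension functions $\qqq\mapsto\dim_{\kappa(\qqq)}([M\otimes_A\kappa(\qqq)]_\nu)$ are locally constant on $U_M$ (Stacks Tag 00NX); passing to a connected open neighborhood makes them simultaneously constant for all $\nu$. One then checks that, after restricting to $Z_M^h$, factoring through $U_M^h$ encodes the condition $\Fitt_{h(\nu)}=B$ in all degrees. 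So the correct repair of your proof is not a chain-stabilization argument but a local-constancy-of-rank argument on the flatness locus.

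Your Part~(ii) via Noetherian induction and graded generic freeness is a valid alternative to the paper's terser remark; the worry you raise about finding a single denominator $a$ that works for all graded pieces is legitimate and is exactly what a prime filtration or graded generic freeness handles. The paper's argument is even shorter: each $\pp$ lies in $F_M^{h_\pp}$, so $\Spec(A)$ is covered by the $F_M^h$; finiteness then follows from Part~(i) plus Noetherian induction along the lines you sketch.
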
	
\begin{proof}
	(i) For any morphism $\Spec(B) \rightarrow \Spec(A)$, one has that $\left[M \otimes_{A} B\right]_\nu$ is locally free of rank $h(\nu)$ if and only if $\Fitt_{h(\nu)-1}([M \otimes_{A} B]_\nu) = 0$ and $\Fitt_{h(\nu)}([M \otimes_{A} B]_\nu) = B$, and that $\Fitt_{j}([M \otimes_{A} B]_\nu) = \left(\Fitt_{j}([M]_\nu)\right) B$ (for more details on Fitting ideals, see \cite[\S 20.2]{EISEN_COMM}).
	
		Let $Z_M^h \subset \Spec(A)$ be the closed subscheme given by $Z_M^h  := \Spec\left(A / \left( \sum_{\nu \in \ZZ} \Fitt_{h(\nu)-1}([M]_\nu)\right)\right)$.
	We have that $\Fitt_{h(\nu)-1}([M \otimes_{A} B]_\nu) =0$ for all $\nu \in \ZZ$ if and only if $\Spec(B) \rightarrow \Spec(A)$ factors through $Z_M^h$.
	Therefore, we can substitute $\Spec(A)$ by $Z_M^h$, and we do so.
	
	Let $\pp \in U_M$ and suppose that $M \otimes_A A_\pp$ has a Hilbert function $h_{M \otimes_A A_\pp} = h$ over $A_\pp$.
	By \autoref{lem_locus_flat}, there is an affine open neighborhood $\pp \in \Spec(A_a) \subset U_M$ of $\pp$ for some $a \in A$.
	Thus \cite[\href{https://stacks.math.columbia.edu/tag/00NX}{Tag 00NX}]{stacks-project} implies that for all $\nu \in \ZZ$ the function $\Spec(A_a) \rightarrow \NN, \; \qqq \mapsto \dim_{\kappa(\qqq)}([M \otimes_{A_a} \kappa(\qqq)]_\nu)$ is locally constant.
	Consequently, there is an open connected neighborhood $\pp \in V \subset \Spec(A_a)$  of $\pp$ such that $h_{M \otimes_A A_\qqq} = h$ for all $\qqq \in V$.
	It then follows that the following locus
	$$
	U_M^h := \big\lbrace \pp \in \Spec(A) \mid M \otimes_{A} A_\pp \text{ has a Hilbert function $h_{M \otimes_{A} A_\pp} = h$ over $A_\pp$}  \big\rbrace
	$$
	is an open subset of $\Spec(A)$.

	Note that $\Fitt_{h(\nu)}([M \otimes_{A} B]_\nu) = B$ if and only if $\mathfrak{P} \cap A \not\supset \Fitt_{h(\nu)}([M]_\nu)$ for all $\mathfrak{P} \in \Spec(B)$.
	Hence, under the condition $\Fitt_{h(\nu)-1}([M \otimes_{A} B]_\nu) =0$ for all $\nu \in \ZZ$, it follows that $\Fitt_{h(\nu)}([M \otimes_{A} B]_\nu) = B$ for all $\nu \in \ZZ$ if and only if  $\Spec(B) \rightarrow \Spec(A)$ factors through $U_M^h$.
	So, after having changed $\Spec(A)$ by $Z_M^h$, we have that  $\fF_M^h$ is represented by the open  subscheme $U_M^h \subset \Spec(A)$.
	
	(ii) For each $\pp \in \Spec(A)$, let $h_\pp$ be the function $h_\pp(\nu) := \dim_{\kappa(\pp)}([M \otimes_{A} \kappa(\pp)]_\nu)$.
	As we have a natural morphism $\Spec(\kappa(\pp)) \rightarrow \Spec(A)$, it clearly follows that $\pp \in F^{h_\pp}_M$.
	Therefore, by the Noetherian hypothesis,  there is a finite number of distinct functions $h_1,\ldots,h_m$ such that set-theoretically we have the equality $\Spec(A) = \bigsqcup_{i=1}^m F_M^{h_i}$.
\end{proof}

\subsection{Flattening stratification of the cohomologies of a complex}

Here we study how the process of taking tensor product with another ring affects the cohomology of a bounded complex.
In this subsection, we continue using \autoref{setup_graded_flattenings}.
The notation below is used throughout the paper.

\begin{notation}
	For a (co-)complex of $A$-modules
	$
	K^\bullet: \, \cdots \rightarrow K^{i-1} \xrightarrow{\phi^{i-1}} K^i \xrightarrow{\phi^i}   K^{i+1} \rightarrow \cdots,
	$
	one defines
	$\zZZ^i\left(K^\bullet\right) := \Ker(\phi^i)$,	$\bBB^i\left(K^\bullet\right) := \IM(\phi^{i-1})$,
	$\HH^i\left(K^\bullet\right) := \zZZ^i(K^\bullet)/\bBB^i(K^\bullet)$,	and $\cCC^i\left(K^\bullet\right) := K^i/\bBB^i(K^\bullet) \,\supset\, \HH^i\left(K^\bullet\right)$
	for all $i \in \ZZ$.
	We use analogous notation with lower indices for a complex $K_\bullet$.
\end{notation}

\begin{remark}
	\label{rem_eq_dim_in_terms_of_Cokers}
	For a complex of $A$-modules $K^\bullet$ and an $A$-module $N$, we have a four-term exact sequence
	\begin{align*}	
		0 \rightarrow \HH^i\big(K^\bullet \otimes_A N\big) \rightarrow \cCC^i(K^\bullet) \otimes_A N \rightarrow 
		K^{i+1} \otimes_A N \rightarrow \cCC^{i+1}(K^\bullet) \otimes_A N \rightarrow 0
	\end{align*}
	of $A$-modules.
	See, e.g., \cite[Proof of Theorem 12.8]{HARTSHORNE}.
\end{remark}

The following lemma transfers the burden of studying the cohomologies of a bounded complex to considering the cokernels of the maps.

\begin{lemma}
	\label{lem_HH_coker_relations}
	Let $K^\bullet : 0 \rightarrow K^0 \rightarrow K^1 \rightarrow \cdots \rightarrow K^p \rightarrow 0$ be a bounded complex of graded $R$-modules.
	Suppose that each $K^i$ has a Hilbert function over $A$.
	Let $\Spec(B) \rightarrow \Spec(A)$ be a morphism.
	Then the following two conditions are equivalent: 
	\begin{enumerate}[\rm (1)]
		\item $\HH^i(K^\bullet \otimes_{A} B)$ has a Hilbert function over $B$ for all $0 \le i \le p$.
		\item $\cCC^i(K^\bullet) \otimes_{A} B$ has a Hilbert function over $B$ for all $0 \le i \le p$.
	\end{enumerate}
	Moreover, if any of the above conditions are satisfied, we have 
	$$
	h_{\HH^i(K^\bullet \otimes_{A} B)} \;=\; h_{\cCC^i(K^\bullet) \otimes_{A} B} + h_{\cCC^{i+1}(K^\bullet) \otimes_{A} B} - h_{K^{i+1} \otimes_{A} B}
	$$
	 and 
	 $$
	 h_{\cCC^i(K^\bullet) \otimes_{A} B}  \;=\; \sum_{j = i}^{p} (-1)^{j-i} \left( h_{\HH^j(K^\bullet \otimes_{A} B)} + h_{K^{j+1} \otimes_{A} B}  \right).
	$$
\end{lemma}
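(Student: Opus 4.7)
The plan is to exploit the four-term exact sequence from \autoref{rem_eq_dim_in_terms_of_Cokers} with $N = B$, namely
\[
0 \rightarrow \HH^i(K^\bullet \otimes_A B) \rightarrow \cCC^i(K^\bullet) \otimes_A B \rightarrow K^{i+1} \otimes_A B \rightarrow \cCC^{i+1}(K^\bullet) \otimes_A B \rightarrow 0,
\]
which is justified by the right-exactness of $-\otimes_A B$ giving $\cCC^i(K^\bullet) \otimes_A B \cong \cCC^i(K^\bullet \otimes_A B)$, combined with the standard identification of $\HH^i$ (resp.\ $\cCC^{i+1}$) as the kernel (resp.\ cokernel) of the induced map $\cCC^i(K^\bullet \otimes_A B) \rightarrow K^{i+1} \otimes_A B$. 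Observe also that each graded component $[K^{i+1} \otimes_A B]_\nu$ is automatically locally free of rank $h_{K^{i+1}}(\nu)$, since by hypothesis $K^{i+1}$ has a Hilbert function over $A$ and local freeness is preserved under base change.

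The core observation that drives both implications is the following elementary fact: in a short exact sequence of finitely generated $B$-modules $0 \rightarrow M' \rightarrow M \rightarrow M'' \rightarrow 0$ with $M''$ locally free of constant rank, the sequence splits locally, so if any two of the modules are locally free of constant rank then so is the third, with additive ranks. I will split the four-term sequence in each graded degree $\nu$ into two short exact sequences by introducing the image $I_\nu$ of the middle map, namely
\[
0 \rightarrow I_\nu \rightarrow [K^{i+1} \otimes_A B]_\nu \rightarrow [\cCC^{i+1}(K^\bullet) \otimes_A B]_\nu \rightarrow 0
\]
and
\[
0 \rightarrow [\HH^i(K^\bullet \otimes_A B)]_\nu \rightarrow [\cCC^i(K^\bullet) \otimes_A B]_\nu \rightarrow I_\nu \rightarrow 0,
\]
and proceed by descending induction on $i$, using the boundary values $K^{p+1} = 0 = \cCC^{p+1}$, which force $\HH^p(K^\bullet \otimes_A B) \cong \cCC^p(K^\bullet) \otimes_A B$ and open the induction. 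For (2)$\Rightarrow$(1), the local freeness of $[\cCC^{i+1}(K^\bullet) \otimes_A B]_\nu$ propagates through the first sequence to $I_\nu$, after which the local freeness of $[\cCC^i(K^\bullet) \otimes_A B]_\nu$ yields that of $[\HH^i(K^\bullet \otimes_A B)]_\nu$ via the second sequence; the direction (1)$\Rightarrow$(2) is symmetric, reading the same two sequences in reverse.

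Reading off the rank in each splitting gives the first formula
\[
h_{\HH^i(K^\bullet \otimes_A B)} = h_{\cCC^i(K^\bullet) \otimes_A B} + h_{\cCC^{i+1}(K^\bullet) \otimes_A B} - h_{K^{i+1} \otimes_A B},
\]
and the second formula then follows by telescoping the rearranged recursion $h_{\cCC^i(K^\bullet) \otimes_A B} = h_{\HH^i(K^\bullet \otimes_A B)} + h_{K^{i+1} \otimes_A B} - h_{\cCC^{i+1}(K^\bullet) \otimes_A B}$ downwards from $i = p$, anchored by the boundary condition $\cCC^{p+1}(K^\bullet) \otimes_A B = 0$. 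Since the entire argument reduces to elementary linear algebra applied in each graded degree separately, I do not foresee any serious obstacle; the only point requiring care is the graded bookkeeping and verifying the compatibility $\cCC^i(K^\bullet) \otimes_A B \cong \cCC^i(K^\bullet \otimes_A B)$ that underlies the four-term sequence itself.
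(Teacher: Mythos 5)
Your proposal is correct and follows essentially the same route as the paper: split the four-term exact sequence of \autoref{rem_eq_dim_in_terms_of_Cokers} at the image of the middle map (the paper calls it $L^i$, you call it $I_\nu$), use the locally-free/splitting observation degreewise (the paper packages this as \autoref{rem_Hilb_funct_short_ex_seq}), and anchor a descending induction at $\cCC^p(K^\bullet)\otimes_A B = \HH^p(K^\bullet\otimes_A B)$. The only cosmetic difference is that you fold both implications into a single inductive scheme, whereas the paper observes that the implication (2)$\Rightarrow$(1) needs no induction at all since all $\cCC^i(K^\bullet)\otimes_A B$ are given at once; your version is not wrong, just slightly less economical.
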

\begin{proof}
	We have the four-term exact sequence $0 \rightarrow \HH^i\big(K^\bullet \otimes_A B\big) \rightarrow \cCC^i(K^\bullet) \otimes_A B \rightarrow 
	K^{i+1} \otimes_A B \rightarrow \cCC^{i+1}(K^\bullet) \otimes_A B \rightarrow 0$, which can be broken into short exact sequences 
	$$
	0 \rightarrow \HH^i\big(K^\bullet \otimes_A B\big) \rightarrow  \cCC^i(K^\bullet) \otimes_A B \rightarrow  L^i \rightarrow 0 \quad \text{ and } \quad 	0 \rightarrow L^i \rightarrow  	K^{i+1} \otimes_A B \rightarrow \cCC^{i+1}(K^\bullet) \otimes_A B \rightarrow 0
	$$
	where $L^i$ is some graded $R$-module.
	
	By \autoref{rem_Hilb_funct_short_ex_seq}, if all $\cCC^i(K^\bullet) \otimes_{A} B$ have a Hilbert function over $B$ then all $L^i$ have a Hilbert function over $B$ and, by the same token, it follows that all $\HH^i(K^\bullet \otimes_A B)$ have a Hilbert function over $B$.
	This establishes the implication (2) $\Rightarrow$ (1).
	
	Suppose that all $\HH^i(K^\bullet \otimes_A B)$ have a Hilbert function over $B$.
	As a consequence of \autoref{rem_Hilb_funct_short_ex_seq}, if $\cCC^{i+1}(K^\bullet) \otimes_A B$ has a Hilbert function over $B$, we obtain that $L^i$ and, subsequently, $\cCC^{i}(K^\bullet) \otimes_A B$ have Hilbert functions over $B$.
	Since $\cCC^{p}(K^\bullet) \otimes_A B = \HH^p(K^\bullet \otimes_A B)$, by descending induction on $i$, we get that all $\cCC^i(K^\bullet) \otimes_A B$ have a Hilbert function over $B$.
	So, the other implication (1) $\Rightarrow$ (2) also holds.
	
	The additional equations relating the Hilbert functions of $\cCC^{i}(K^\bullet) \otimes_A B$ and $\HH^i(K^\bullet \otimes_A B)$ follow similarly.
\end{proof}

For a given bounded complex of graded $R$-modules $K^\bullet: 0 \rightarrow K^0 \rightarrow K^1 \rightarrow \cdots \rightarrow K^p \rightarrow 0$ such that each $K^i$ has a Hilbert function over $A$ and a given tuple of $p+1$ functions $\mathbf{h} = (h_0,\ldots,h_p) : \ZZ^{p+1} \rightarrow \NN^{p+1}$, we consider the following functor for any $A$-algebra $B$,
$$
\fF_{K^\bullet}^\mathbf{h}(B) \;:=\;  \left\lbrace \text{morphism } \Spec(B) \rightarrow \Spec(A) \;
\begin{array}{|l}
	\left[\HH^i(K^\bullet \otimes_{A} B)\right]_\nu \text{ is a locally free $B$-module}\\
	\text{of rank $h_i(\nu)$ for all $0 \le i \le p, \nu \in \ZZ$}
\end{array}
\right\rbrace.
$$
This is a functor from (Noetherian) $A$-algebras to sets.
For completeness, we include a lemma which shows that, in our setting, flatness is equivalent to being locally free.

\begin{lemma}
	\label{lem_equiv_flat_loc_free}
	Let $K^\bullet : 0 \rightarrow K^0 \rightarrow K^1 \rightarrow \cdots \rightarrow K^p \rightarrow 0$ be a bounded complex of graded $R$-modules.
	Suppose that $[K^i]_\nu$ is a finitely generated locally free $A$-module for all $0 \le i \le p, \nu \in \ZZ$.
	Let $\Spec(B) \rightarrow \Spec(A)$ be a morphism.
	Then the following two conditions are equivalent: 
	\begin{enumerate}[\rm (1)]
		\item $[\HH^i(K^\bullet \otimes_{A} B)]_\nu$ is a flat $B$-module for all $0 \le i \le p, \nu \in \ZZ$.
		\item $[\HH^i(K^\bullet \otimes_{A} B)]_\nu$ is a locally free $B$-module for all $0 \le i \le p, \nu \in \ZZ$.
	\end{enumerate}
\end{lemma}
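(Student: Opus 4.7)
The implication $(2) \Rightarrow (1)$ is immediate since locally free modules are flat. For $(1) \Rightarrow (2)$, I plan to work one graded degree at a time. Fix $\nu \in \ZZ$ and use the four-term exact sequence from \autoref{rem_eq_dim_in_terms_of_Cokers}, which splits into two short exact sequences of $B$-modules
\begin{align*}
(\star_1)&\quad 0 \longrightarrow [\HH^i(K^\bullet \otimes_A B)]_\nu \longrightarrow [\cCC^i(K^\bullet) \otimes_A B]_\nu \longrightarrow [L^i]_\nu \longrightarrow 0,\\
(\star_2)&\quad 0 \longrightarrow [L^i]_\nu \longrightarrow [K^{i+1} \otimes_A B]_\nu \longrightarrow [\cCC^{i+1}(K^\bullet) \otimes_A B]_\nu \longrightarrow 0,
\end{align*}
exactly as in the proof of \autoref{lem_HH_coker_relations}, with $[L^i]_\nu$ the image of the middle map of the four-term sequence. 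The plan is to show by descending induction on $i$ that $[\cCC^i(K^\bullet) \otimes_A B]_\nu$ is finitely generated projective over $B$; the local freeness of $[\HH^i(K^\bullet \otimes_A B)]_\nu$ will then drop out as a direct summand via $(\star_1)$.

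The hard part is that $B$ is not assumed Noetherian, so flatness of a finitely generated $B$-module does not automatically imply local freeness. Consequently, the induction must propagate finite presentation in parallel with flatness, so that at each step the Lazard--Govorov equivalence ``flat $+$ finitely presented $=$ projective'' can be invoked. The base case $i = p$ holds because $\HH^p(K^\bullet \otimes_A B) = \cCC^p(K^\bullet) \otimes_A B$: this module is flat by hypothesis, and it is finitely presented because $[\cCC^p(K^\bullet)]_\nu$ is the quotient of the finitely presented $A$-module $[K^p]_\nu$ by the finitely generated submodule $\IM([\phi^{p-1}]_\nu)$, and tensoring with $B$ preserves finite presentation. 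For the inductive step, assuming $[\cCC^{i+1}(K^\bullet) \otimes_A B]_\nu$ is finitely generated projective, the sequence $(\star_2)$ splits, so $[L^i]_\nu$ is a direct summand of the finitely generated projective module $[K^{i+1} \otimes_A B]_\nu$, hence itself finitely generated projective. Then $(\star_1)$ also splits, identifying $[\cCC^i(K^\bullet) \otimes_A B]_\nu \cong [\HH^i(K^\bullet \otimes_A B)]_\nu \oplus [L^i]_\nu$ as a direct sum of flat $B$-modules, hence flat; the same quotient argument as in the base case shows that it is also finitely presented, so Lazard--Govorov upgrades this to projective.

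The splitting of $(\star_1)$ finally exhibits $[\HH^i(K^\bullet \otimes_A B)]_\nu$ as a direct summand of the finitely generated projective $B$-module $[\cCC^i(K^\bullet) \otimes_A B]_\nu$, hence it is itself finitely generated projective, i.e.\ locally free, as required.
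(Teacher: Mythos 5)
Your proof is correct and takes essentially the same approach as the paper: the paper's own proof also observes that $[\cCC^p(K^\bullet) \otimes_A B]_\nu$ is locally free because it is flat and of finite presentation, and then runs the same descending induction through the two short exact sequences to conclude that all $[\cCC^i(K^\bullet) \otimes_A B]_\nu$ and $[\HH^i(K^\bullet \otimes_A B)]_\nu$ are locally free. Your write-up just spells out the details that the paper leaves implicit, namely the explicit propagation of finite presentation alongside flatness and the repeated use of the ``finitely presented $+$ flat $\Rightarrow$ projective $\Rightarrow$ locally free'' equivalence together with the splittings arising from projectivity of the third terms.
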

\begin{proof}	
	The implication (2) $\Rightarrow$ (1) is clear. 
	So, we assume that each $[\HH^i(K^\bullet \otimes_{A} B)]_\nu$ is a flat $B$-module.
	As in the proof of \autoref{lem_HH_coker_relations}, we consider the short exact sequences $0 \rightarrow \HH^i\big(K^\bullet \otimes_A B\big) \rightarrow  \cCC^i(K^\bullet) \otimes_A B \rightarrow  L^i \rightarrow 0$ and $0 \rightarrow L^i \rightarrow  	K^{i+1} \otimes_A B \rightarrow \cCC^{i+1}(K^\bullet) \otimes_A B \rightarrow 0$.
	Each $[\cCC^{p}(K^\bullet) \otimes_A B]_\nu = [\HH^p(K^\bullet \otimes_A B)]_\nu$ is a locally free $B$-module since it is flat of finite presentation as a $B$-module.
	Similarly to \autoref{lem_HH_coker_relations}, by descending induction on $i$, we can show that $[\HH^i(K^\bullet \otimes_{A} B)]_\nu$ and $[\cCC^i(K^\bullet) \otimes_{A} B]_\nu$ are locally free $B$-modules for all $0 \le i \le p, \nu \in \ZZ$. 
\end{proof}

The following theorem deals with the stratification of the cohomologies of bounded complexes.

\begin{theorem}
		\label{thm_flattening_complex}
		Assume that $A$ is Noetherian.
		Let $K^\bullet : 0 \rightarrow K^0 \rightarrow K^1 \rightarrow \cdots \rightarrow K^p \rightarrow 0$ be a bounded complex of finitely generated graded $R$-modules and $\mathbf{h} = (h_0,\ldots,h_p) : \ZZ^{p+1} \rightarrow \NN^{p+1}$ be a tuple of functions.
		Suppose that each $K^i$ has a Hilbert function over $A$.
		Then the functor $\fF_{K^\bullet}^\mathbf{h}$ is  represented by a locally closed subscheme $F_{K^\bullet}^\mathbf{h} \subset \Spec(A)$.
		In other words, for any morphism $g: \Spec(B) \rightarrow \Spec(A)$, each $\HH^i(K^\bullet \otimes_{A} B)$ has a Hilbert function over $B$ equal to $h_i$ if and only if $g$ can be factored as 
		$$
		\Spec(B)  \; \rightarrow \; F_{K^\bullet}^\mathbf{h} \; \rightarrow \; \Spec(A).
		$$
\end{theorem}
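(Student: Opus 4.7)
The plan is to reduce to the single-module flattening stratification of \autoref{thm_stratify_mod} by replacing the cohomologies $\HH^i(K^\bullet)$ with the cokernels $\cCC^i(K^\bullet)$ via \autoref{lem_HH_coker_relations}, and then to take a finite scheme-theoretic intersection of the resulting strata.

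First I would check that each $\cCC^i(K^\bullet)$ is a finitely generated graded $R$-module: it is a quotient of the finitely generated $K^i$, which is legitimate since $R$ is Noetherian (being a finitely generated algebra over the Noetherian $A$). Using the inversion formula in \autoref{lem_HH_coker_relations}, I would then define a tuple $\mathbf{g}=(g_0,\ldots,g_p)$ by
$$g_i(\nu) \;:=\; \sum_{j=i}^{p}(-1)^{j-i}\bigl(h_j(\nu)+h_{K^{j+1}}(\nu)\bigr), \qquad \text{with } K^{p+1}=0.$$
A short telescoping check gives $g_i(\nu)+g_{i+1}(\nu)-h_{K^{i+1}}(\nu)=h_i(\nu)$, so the assignments $\mathbf{h}\leftrightarrow \mathbf{g}$ are mutually inverse. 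If some $g_i(\nu)<0$, then $\fF_{K^\bullet}^\mathbf{h}$ is necessarily empty (a Hilbert function takes nonnegative values) and is represented by the empty subscheme. Otherwise, \autoref{lem_HH_coker_relations} yields, for every morphism $\Spec(B)\to\Spec(A)$, the equivalence: each $\HH^i(K^\bullet\otimes_A B)$ has Hilbert function $h_i$ over $B$ if and only if each $\cCC^i(K^\bullet)\otimes_A B$ has Hilbert function $g_i$ over $B$.

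Next, I would apply \autoref{thm_stratify_mod}(i) to each pair $(\cCC^i(K^\bullet),g_i)$ to obtain locally closed subschemes $F_{\cCC^i(K^\bullet)}^{g_i}\subset \Spec(A)$, and define
$$F_{K^\bullet}^\mathbf{h} \;:=\; \bigcap_{i=0}^{p} F_{\cCC^i(K^\bullet)}^{g_i}$$
as the scheme-theoretic intersection (equivalently, the iterated fiber product over $\Spec(A)$). A finite fiber product of locally closed subschemes of $\Spec(A)$ is again locally closed in $\Spec(A)$, so $F_{K^\bullet}^\mathbf{h}$ has the desired form. For representability, a morphism $g:\Spec(B)\to\Spec(A)$ factors through $F_{K^\bullet}^\mathbf{h}$ if and only if it factors through every $F_{\cCC^i(K^\bullet)}^{g_i}$, which by the equivalence above is exactly the condition $g\in\fF_{K^\bullet}^\mathbf{h}(B)$.

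The only delicate point is that the bijection between the two collections of Hilbert functions must be functorial in $B$ rather than merely pointwise on residue fields; this is precisely the content of \autoref{lem_HH_coker_relations}, which rests on the base-change-stable four-term exact sequence of \autoref{rem_eq_dim_in_terms_of_Cokers}. Once this translation is in hand, all remaining steps are purely formal and no analytic input beyond \autoref{thm_stratify_mod} is required.
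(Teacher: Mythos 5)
Your proof is correct and takes essentially the same route as the paper: translate the Hilbert-function conditions on $\HH^i(K^\bullet\otimes_A B)$ into conditions on $\cCC^i(K^\bullet)\otimes_A B$ via \autoref{lem_HH_coker_relations}, then represent each of those by \autoref{thm_stratify_mod} and take the finite intersection of the resulting locally closed subschemes. Your extra remarks (finite generation of the $\cCC^i$, and the case where some $g_i(\nu)<0$ forcing $F_{K^\bullet}^{\mathbf{h}}=\emptyset$) are harmless clarifications rather than a different argument.
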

\begin{proof}
	For any morphism $\Spec(B) \rightarrow \Spec(A)$, \autoref{lem_HH_coker_relations} implies that each $\HH^i(K^\bullet \otimes_{A} B)$ has a Hilbert function over $B$ equal to $h_i$ if and only if each $\cCC^i(K^\bullet) \otimes_{A} B$ has a Hilbert function over $B$ equal to $h_i'$, where $h_i' := \sum_{j = i}^{p} (-1)^{j-i} \left( h_j + h_{K^{j+1} \otimes_{A} B}  \right)$.
	Therefore, by \autoref{thm_stratify_mod}, $\fF_{K^\bullet}^\mathbf{h}$ is  represented by the locally closed subscheme $F_{K^\bullet}^\mathbf{h} \subset \Spec(A)$ given by $F_{\cCC^0(K^\bullet)}^{h_0'} \cap F_{\cCC^1(K^\bullet)}^{h_1'} \cap \cdots \cap F_{\cCC^p(K^\bullet)}^{h_p'}$.
\end{proof}

\subsection{Flattening stratification of Ext modules}
We now focus on a flattening stratification result for certain Ext modules.
During this subsection, we shall use the following setup. 

\begin{setup}
	\label{setup_flattening_poly_ring}
	Let $A$ be a Noetherian ring and $R$ be a positively graded polynomial ring $R = A[x_1,\ldots,x_r]$ over $A$; that is, $\deg(x_i) \in \ZZ_+$ and $\deg(a)=0 \in \NN$ for all $a \in A$.
\end{setup}

First, we recall the following result from \cite{FIBER_FULL}.

\begin{lemma}
	\label{lem_Ext_base_change}
	Let $M$ be a finitely generated graded $R$-module and suppose that $M$ is a flat $A$-module.
	Let $F_\bullet :  \cdots \rightarrow F_i \rightarrow \cdots \rightarrow  F_1 \rightarrow F_0$ be a graded free $R$-resolution of $M$ by modules of finite rank.
	Let 
	$$
	D_M^i := \Coker\big(\Hom_R(F_{i-1}, R) \rightarrow \Hom_R(F_{i}, R)\big)
	$$
	for each $i \ge 0$.
	Then the following statements hold:
	\begin{enumerate}[\rm (i)]
		\item $\Ext_R^i(M, R) = 0$ for all $i \ge r+1$.
		\item $D_M^i$ is a flat $A$-module for all $i \ge r+1$.
		\item If $\Ext_R^i(M, R)$ is a flat $A$-module for all $0 \le i \le r$, then $$
		\Ext_R^i(M, R) \otimes_A B \xrightarrow{\cong} \Ext_{R \otimes_A B}^i(M\otimes_A B, R \otimes_A B)
		$$
		for all $i \ge 0$ any $A$-algebra $B$.
	\end{enumerate}
\end{lemma}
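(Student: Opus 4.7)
My plan is to handle the three parts in order, since (ii) builds on the projective dimension bound from (i), and (iii) uses both (i) and a standard base-change argument for complexes. For part (i), the goal is to show $\pd_R(M) \le r$. Using $A$-flatness of $M$, an induction on the syzygy short exact sequences $0 \to \Omega^i \to F_{i-1} \to \Omega^{i-1} \to 0$ (with $\Omega^0 = M$ and $\Omega^i := \Ker(F_{i-1} \to F_{i-2})$) via Tor long exact sequences shows that each $\Omega^i$ is $A$-flat. Consequently $F_\bullet \otimes_A \kappa(\pp)$ remains an acyclic graded free resolution of $M \otimes_A \kappa(\pp)$ over the polynomial ring $R \otimes_A \kappa(\pp) = \kappa(\pp)[x_1,\ldots,x_r]$, which has global dimension $r$, so $\Omega^r \otimes_A \kappa(\pp)$ is projective over $R \otimes_A \kappa(\pp)$ for every $\pp$. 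A local argument at each prime $\mathfrak{P} \subset R$, applying Nakayama on the Noetherian local ring $R_\mathfrak{P}$ together with the $A$-flatness of $\Omega^r$, upgrades this to $\Omega^r$ being $R$-projective, yielding $\pd_R(M) \le r$ and hence $\Ext_R^i(M,R) = 0$ for $i \ge r+1$.

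For part (ii), I would exploit the higher syzygies: by (i), $\Omega^i$ is $R$-projective for all $i \ge r$, so each short exact sequence $0 \to \Omega^{i+1} \to F_i \to \Omega^i \to 0$ splits for $i \ge r$, and its dual $0 \to \Omega^{i,*} \to F_i^* \to \Omega^{i+1,*} \to 0$ (with $-^* := \Hom_R(-, R)$) is split exact. Dimension shifting identifies $\Ext_R^1(\Omega^{i-1}, R)$ with $\Ext_R^i(M,R)$, and the vanishing in (i) forces $F_{i-1}^* \twoheadrightarrow \Omega^{i,*}$ for $i \ge r+1$. Hence $D_M^i = F_i^* / \IM(F_{i-1}^* \to F_i^*) = F_i^* / \Omega^{i,*} \cong \Omega^{i+1,*}$ for $i \ge r+1$. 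Since $\Omega^{i+1}$ is $R$-projective, so is its dual, which is therefore $A$-flat by the $A$-flatness of $R$.

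For part (iii), I would invoke a standard base-change principle for cochain complexes. The complex $C^\bullet := \Hom_R(F_\bullet, R)$ consists of $A$-flat modules, and by hypothesis combined with (i) its cohomology $H^i(C^\bullet) = \Ext_R^i(M,R)$ is $A$-flat in every degree. A routine induction using the short exact sequences of cycles, boundaries, and cohomology shows that the cycles $Z^i$ and boundaries $B^i$ are also $A$-flat, so these sequences remain exact after tensoring with any $A$-algebra $B$; this yields an isomorphism $\Ext_R^i(M,R) \otimes_A B \xrightarrow{\cong} H^i(C^\bullet \otimes_A B)$. Since $M$ is $A$-flat, $F_\bullet \otimes_A B$ is a graded free resolution of $M \otimes_A B$ over $R \otimes_A B$, so the target identifies with $\Ext_{R \otimes_A B}^i(M \otimes_A B, R \otimes_A B)$.

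The main obstacle lies in part (i): upgrading fiberwise projectivity of $\Omega^r$ to projectivity over $R$ itself is delicate without regularity assumptions on $A$, since one cannot directly invoke global dimension bounds for $R_\mathfrak{P}$. For this step I would either appeal to the graded Nakayama lemma at each prime of $R$ combined with the established $A$-flatness of $\Omega^r$, or directly cite the corresponding result in \cite{FIBER_FULL}.
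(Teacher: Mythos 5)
The paper's own ``proof'' of this lemma is a one-line citation to \cite[Lemma 2.10]{FIBER_FULL}, so your self-contained argument is genuinely doing the work that the paper delegates. Having checked it, the outline is correct and, as far as one can tell without opening the cited companion paper, likely parallels what is done there: prove $\pd_R(M) \le r$ via $A$-flatness of the syzygies, use this and dimension shifting to identify $D_M^i$ for $i \ge r+1$ with the $R$-dual of a projective syzygy, and then run a base-change argument for a complex with $A$-flat terms and $A$-flat cohomology.

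Two remarks. First, the concern you flag at the end about upgrading fiberwise projectivity of $\Omega^r$ to projectivity over $R$ is resolved by the fiberwise (local) criterion for flatness: since $R$ is $A$-flat, $\Omega^r$ is $A$-flat, and $\Omega^r \otimes_A \kappa(\pp)$ is flat over $R \otimes_A \kappa(\pp)$ at every prime $\mathfrak{P} \supset \pp R$, one concludes $\Omega^r_{\mathfrak{P}}$ is flat over $R_{\mathfrak{P}}$ for all $\mathfrak{P}$, hence $\Omega^r$ is $R$-flat, and being finitely presented over the Noetherian ring $R$ it is $R$-projective. No regularity of $A$ is needed; your hedging here is more cautious than necessary. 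Second, in part (iii) the ``routine induction'' on cycles and boundaries of $C^\bullet = \Hom_R(F_\bullet, R)$ is only routine if you run it downward from high cohomological degree, and the complex $C^\bullet$ is not bounded above. The induction therefore needs an anchor, and this is exactly what part (ii) supplies: for $i \ge r$ the dualized syzygy sequences $0 \to (\Omega^i)^* \to F_i^* \to (\Omega^{i+1})^* \to 0$ are split, so $\zZZ^i(C^\bullet) = (\Omega^i)^*$ and $\bBB^i(C^\bullet) = (\Omega^i)^*$ are $R$-projective (hence $A$-flat) in that range, giving the base case of the descent. You established (ii) before (iii) but did not explicitly invoke it when asserting the cycles and boundaries are $A$-flat; making that dependence explicit would close the only real loose end in the argument. (Equivalently, one can replace $C^\bullet$ by the bounded truncation $0 \to C^0 \to \cdots \to C^{r+1} \to D_M^{r+1} \to 0$, which is exactly the device the paper uses in the proof of \autoref{thm_flat_Ext}.)
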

\begin{proof}
	It follows directly from \cite[Lemma 2.10]{FIBER_FULL}.
\end{proof}

For a given finitely generated graded $R$-module $M$ that is $A$-flat and a tuple of functions $\mathbf{h} = (h_0,\ldots,h_r) : \ZZ^{r+1} \rightarrow \NN^{r+1}$, we consider the following functor for any $A$-algebra $B$, 
$$
\fFExt_M^{\mathbf{h}}(B) \;:=\;  \left\lbrace \text{morphism } \Spec(B) \rightarrow \Spec(A) \;
\begin{array}{|l}
	\left[\Ext_{R \otimes_A B}^i(M \otimes_{A} B, R\otimes_{A} B)\right]_\nu \text{ is a locally free} \\ 
	\text{$B$-module of rank $h_i(\nu)$ for all $0 \le i \le r, \nu \in \ZZ$}
\end{array}
\right\rbrace.
$$
This is a functor from (Noetherian) $A$-algebras to sets.
This functor controls all the Ext modules of $M$ because, as a consequence of \autoref{lem_Ext_base_change}, if $M$ is $A$-flat then $\Ext_{R \otimes_A B}^i(M \otimes_{A} B, R \otimes_{A} B)=0$ for all $i \ge r+1$.
The next theorem provides a flattening stratification for all the Ext modules.

\begin{theorem}
	\label{thm_flat_Ext}
	Let $M$ be a finitely generated graded $R$-module that is a flat $A$-module, and  $\mathbf{h} = (h_0,\ldots,h_r) : \ZZ^{r+1} \rightarrow \NN^{r+1}$ be a tuple of functions.
	Then the functor $\fFExt_{M}^{\mathbf{h}}$ is  represented by a locally closed subscheme $\FExt_{M}^{\mathbf{h}} \subset \Spec(A)$.
	In other words, for any morphism $g: \Spec(B) \rightarrow \Spec(A)$, each $\Ext_{R \otimes_A B}^i(M \otimes_{A} B, R \otimes_{A} B)$ has a Hilbert function over $B$ equal to $h_i$ if and only if $g$ can be factored as 
	$$
		\Spec(B)  \; \rightarrow \; \FExt_{M}^{\mathbf{h}} \; \rightarrow \; \Spec(A).
	$$
\end{theorem}
\begin{proof}
	Let $F_\bullet: \cdots \rightarrow F_i \rightarrow \cdots \rightarrow  F_1 \rightarrow F_0$ be a graded free $R$-resolution of $M$ by modules of finite rank.
	Consider the complex $F_\bullet^{\le r+1}$ given as the truncation
	$
	F_\bullet^{\le r+1}  :  0 \rightarrow F_{r+1} \rightarrow F_r \rightarrow \cdots \rightarrow F_1 \rightarrow F_0, 
	$
	and $P^\bullet := \Hom_R(F_\bullet^{\le r+1}, R)$.
	By \autoref{lem_Ext_base_change}, $D_M^{r+1} = \HH^{r+1}(P^\bullet) = \cCC^{r+1}(P^\bullet)$ is a flat $A$-module and so each $[D_M^{r+1}]_\nu$ (being finitely presented over $A$) is a locally free $A$-module.
	Hence \cite[\href{https://stacks.math.columbia.edu/tag/00NX}{Tag 00NX}]{stacks-project} implies that for all $\nu \in \ZZ$ the function $\Spec(A) \rightarrow \NN, \; \pp \mapsto \dim_{\kappa(\pp)}([D_M^{r+1} \otimes_{A} \kappa(\pp)]_\nu)$ is locally constant.
	As a consequence, $h_{D_M^{r+1}}$ is a constant function on each connected component of $\Spec(A)$.
	
	Consider the bounded complex  $K^\bullet$ given by 
	$$
	K^\bullet : \quad 0 \rightarrow P^0 \rightarrow \cdots \rightarrow P^r \rightarrow P^{r+1} \rightarrow D_M^{r+1} \rightarrow 0.
	$$ 	
	Note that $\HH^i(K^\bullet \otimes_{A} B) = \HH^i(P^\bullet \otimes_{A} B) \cong \Ext_{R \otimes_A B}^i(M \otimes_{A} B, R\otimes_{A} B)$ for all $0 \le i \le r$ (since $M$ is $A$-flat), and that $\HH^{r+1}(K^\bullet \otimes_{A} B) = \HH^{r+2}(K^\bullet \otimes_{A} B) = 0$.
	
	To show that the functor $\fFExt_M^{\mathbf{h}}$ is representable, we can simply restrict $\Spec(A)$ to one of its connected components.
	Thus, we now assume that $\Spec(A)$ is connected, and so $D_M^{r+1}$ has a Hilbert function over $A$.
 	Let $\mathbf{h}' = (h_0,\ldots,h_r,0,0) : \ZZ^{r+3} \rightarrow \NN^{r+3}$ be obtained by concatenating two zero functions $0 : \ZZ \rightarrow \NN$ to $\mathbf{h}$.
 	Finally, by \autoref{thm_flattening_complex}, it follows that $\fFExt_M^{\mathbf{h}}$ is represented by the locally closed subscheme $\FExt_M^{\mathbf{h}} := F_{K^\bullet}^{\mathbf{h}'} \subset \Spec(A)$.
 	This settles the proof of the theorem.
\end{proof}

\subsection{Flattening stratification of local cohomology modules}
Next, we provide a flattening stratification theorem for local cohomology modules.
The main idea is that, by using some techniques from \cite{FIBER_FULL, GEN_FREENESS_LOC_COHOM}, we can obtain a flattening stratification of local cohomology modules from the one of Ext modules given in \autoref{thm_flat_Ext}.

We start with the following lemma that gives a base change of local cohomology modules over a base which is not necessarily Noetherian. 

\begin{lemma}
	\label{lem_base_change_loc}
	Let $A$ be a ring, $R = A[x_1,\ldots,x_r]$ be a positively graded polynomial ring over $A$, $\mm = (x_1,\ldots,x_r) \subset R$ be the graded irrelevant ideal, and $M$ be a graded $R$-module.
	If $M$ is  $A$-flat and $\HL^i(M)$ is  $A$-flat for all $0 \le i \le r$, then $\HL^i(M) \otimes_{A} B \xrightarrow{\cong} \HL^i(M \otimes_{A} B)$ for all $0 \le i \le r$ and any $A$-algebra $B$. 
\end{lemma}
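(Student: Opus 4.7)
The plan is to compute both $\HL^i(M)$ and $\HL^i(M\otimes_A B)$ using the \v{C}ech complex on the generators $x_1,\dots,x_r$ of $\mm$, reduce the problem to showing that all cycles and boundaries of this complex are $A$-flat, and then conclude by exactness of tensoring flat modules.

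Let $C^\bullet := \check{C}^\bullet(x_1,\dots,x_r;M)$ denote the \v{C}ech complex, so that each $C^i$ is a finite direct sum of localizations of the form $M_{x_{i_1}\cdots x_{i_k}}$. Since localization is a filtered colimit of copies of $M$ and $M$ is $A$-flat by hypothesis, each $C^i$ is $A$-flat. The \v{C}ech complex also commutes with arbitrary base change (one checks termwise that $M_{x_I}\otimes_A B = (M\otimes_A B)_{x_I}$), so $C^\bullet \otimes_A B = \check{C}^\bullet(x_1,\dots,x_r; M \otimes_A B)$; combined with the standard identification $\HL^i(-) = \HH^i(\check{C}^\bullet(x_1,\dots,x_r;-))$, this reduces the claim to showing $\HH^i(C^\bullet) \otimes_A B \cong \HH^i(C^\bullet \otimes_A B)$.

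Next, I would show by descending induction on $i$ that the cycles $\zZZ^i(C^\bullet)$ and boundaries $\bBB^i(C^\bullet)$ are all $A$-flat. The base case is immediate since $\zZZ^r(C^\bullet) = C^r$ is $A$-flat. For the inductive step, recall that in any short exact sequence of $A$-modules $0 \to N' \to N \to N'' \to 0$ with $N$ and $N''$ both $A$-flat, the long exact sequence of $\Tor_{\ast}^A(-,B)$ immediately forces $N'$ to be $A$-flat. Applying this first to $0 \to \bBB^i(C^\bullet) \to \zZZ^i(C^\bullet) \to \HL^i(M) \to 0$ (using the inductive flatness of $\zZZ^i(C^\bullet)$ and the standing flatness of $\HL^i(M)$) gives $\bBB^i(C^\bullet)$ flat; applying it again to $0 \to \zZZ^{i-1}(C^\bullet) \to C^{i-1} \to \bBB^i(C^\bullet) \to 0$ (using flatness of $C^{i-1}$ and of $\bBB^i(C^\bullet)$ just obtained) gives $\zZZ^{i-1}(C^\bullet)$ flat.

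With flatness of all cycles and boundaries in hand, tensoring the two families of short exact sequences over $A$ with any $A$-algebra $B$ preserves exactness. This identifies $\zZZ^i(C^\bullet\otimes_A B) = \zZZ^i(C^\bullet)\otimes_A B$ and $\bBB^i(C^\bullet \otimes_A B) = \bBB^i(C^\bullet)\otimes_A B$, and quotienting yields the desired isomorphism $\HL^i(M)\otimes_A B \cong \HL^i(M\otimes_A B)$. The most delicate point, and the reason for running the induction downward rather than upward from $\HL^0(M)$, is that while one can always deduce flatness of the leftmost term of a short exact sequence from flatness of the other two, the analogous implication for the rightmost term fails in general (as $0 \to \ZZ \xrightarrow{\;2\;} \ZZ \to \ZZ/2\ZZ \to 0$ shows); descending induction sidesteps this asymmetry, and the absence of a Noetherian hypothesis on $A$ is what makes the \v{C}ech complex (rather than Ext modules as in \autoref{thm_flat_Ext}) the correct tool here.
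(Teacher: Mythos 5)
Your proof is correct, and it takes a genuinely different (and more elementary) route than the paper's. The paper's proof introduces a graded free $R$-resolution $L_\bullet$ of $M$, forms the double complex $\mathcal{C}_\mm^\bullet \otimes_R L_\bullet \otimes_A B$, and uses its two spectral sequences to identify $\HL^i(M\otimes_A B)$ with $\HH_{r-i}\big(\HL^r(L_\bullet)\otimes_A B\big)$; the flatness hypotheses then feed into a cited base-change lemma to conclude. You bypass the free resolution and the spectral sequences entirely: you work directly with the \v{C}ech complex, prove by descending induction that all of its cycles and boundaries are $A$-flat (the two-out-of-three argument in a short exact sequence, applied to $0 \to \bBB^i \to \zZZ^i \to \HL^i(M) \to 0$ and then $0 \to \zZZ^{i-1} \to C^{i-1} \to \bBB^i \to 0$), and then conclude by the exactness of tensoring along the flat terms. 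This is self-contained and arguably easier to audit, since it replaces the homological machinery with explicit diagram chases; the trade-off is that the paper's route reuses lemmas already proved in the cited references and so is shorter to write. One point you pass over quickly: the identification $\HL^i(M) \cong \HH^i\big(\check{C}^\bullet(x_1,\ldots,x_r;M)\big)$ over an arbitrary (non-Noetherian) base ring $A$ is not automatic; the paper explicitly justifies it via the fact that $x_1,\ldots,x_r$ is a regular (hence proregular) sequence in $R = A[x_1,\ldots,x_r]$, citing Schenzel. You should state this too, since without the proregularity the derived-functor and \v{C}ech definitions of local cohomology need not agree in this generality; once that is noted, everything else in your argument goes through.
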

\begin{proof}
	By using \cite{SCHENZEL_PROREGULAR} and the fact that $x_1,\ldots,x_r$ is a regular sequence in $R$, even if $A$ is not Noetherian, we can compute $\HL^i(M)$ as the $i$-th cohomology of $\mathcal{C}_\mm^\bullet \otimes_R M$ where $\mathcal{C}_\mm^\bullet$ denotes the \v{C}ech complex with respect to $\mm = (x_1,\ldots,x_r)$.
	Let $L_\bullet  :  \cdots \rightarrow L_i \rightarrow \cdots \rightarrow  L_1 \rightarrow L_0$ be a graded free $R$-resolution of $M$.
	By considering the spectral sequences coming from the double complex $\mathcal{C}_\mm^\bullet \otimes_S L_\bullet \otimes_{A} B$, we obtain the isomorphisms 
	$$
	\HL^i(M \otimes_{A} B) \cong \HH_{r-i}\big(\HL^{r}(L_\bullet) \otimes_{A} B\big)
	$$
	for any $A$-algebra $B$ and all integers $i$ (see \cite[Lemma 3.4]{GEN_FREENESS_LOC_COHOM}).
	By the flatness condition and standard base change results (see \cite[Lemma 2.8]{GEN_FREENESS_LOC_COHOM}), we obtain
	$$
	\HL^i(M)\otimes_{A} B \cong \HH_{r-i}(\HL^{r}(L_\bullet)) \otimes_{A} B \xrightarrow{\cong} \HH_{r-i}(\HL^{r}(L_\bullet) \otimes_{A} B) \cong  \HL^i(M \otimes_{A} B),
	$$
	and so the result follows.
\end{proof}

The following setup is now set in place for the rest of the subsection.

\begin{setup}
	\label{setup_flattening_cohom}
	Let $A$ be a Noetherian ring, $R$ be a positively graded polynomial ring $R = A[x_1,\ldots,x_r]$ over $A$, $\mm = (x_1,\ldots,x_r) \subset R$ be the graded irrelevant ideal, and $\delta:=\deg(x_1)+\cdots+\deg(x_r) \in \ZZ_+$.
\end{setup}

For a graded $R$-module $M$ and a morphism $\Spec(B) \rightarrow \Spec(A)$, we consider the graded $(R \otimes_{A} B)$-module $M \otimes_{A} B$ and we denote the \emph{$B$-relative graded Matlis dual} by
$$
	\left(M \otimes_{A} B\right)^{*_B}={}^*\Hom_B(M \otimes_{A} B, B): = \bigoplus_{\nu \in \ZZ} \Hom_B\left({\left[M \otimes_{A} B\right]}_{-\nu}, B\right).
$$ 
The dual $\left(M \otimes_{A} B\right)^{*_B}$ has a natural structure of graded $(R \otimes_{A} B)$-module.
From the canonical perfect pairing of free $A$-modules in ``top'' local cohomology
$
{\left[R\right]}_\nu \otimes_{A} {\left[\HL^{r}(R)\right]}_{-\delta-\nu} \rightarrow {\left[\HL^{r}(R)\right]}_{-\delta} \cong A
$
we obtain a canonical graded $R$-isomorphism 
$
\HL^{r}(R) \cong {\left(R(-\delta)\right)}^{*_A} = \grHom_A\left(R(-\delta), A\right).	
$
Then for a morphism $\Spec(B) \rightarrow \Spec(A)$ and a complex $F_\bullet :  \cdots \rightarrow F_i \rightarrow \cdots \rightarrow  F_1 \rightarrow F_0$ of finitely generated graded free $R$-modules, we obtain the isomorphisms of complexes 
\begin{equation*}
	\HL^{r}(F_\bullet \otimes_{A} B)  \cong
	\HL^{r}(F_\bullet) \otimes_{A} B \cong {\big(\Hom_R(F_\bullet
		, R(-\delta))\big)}^{*_A} \otimes_{A} B \cong {\big(\Hom_R(F_\bullet
		, R(-\delta)) \otimes_{A} B\big)}^{*_B}.
\end{equation*}
The next proposition gives a sort of local duality theorem (see \cite[Proposition 2.11]{FIBER_FULL}).

\begin{proposition}
	\label{prop_equiv_freeness}
	Let $M$ be a finitely generated graded $R$-module and suppose that $M$ is a flat $A$-module.
	Let $\Spec(B) \rightarrow \Spec(A)$ be a morphism.
	Then the following two conditions are equivalent:
	\begin{enumerate}[\rm (1)]
		\item $\HL^i(M \otimes_{A} B)$ has a Hilbert function over $B$ for all $0 \le i \le r$.
		\item $\Ext_{R \otimes_{A} B}^i(M \otimes_{A} B, R \otimes_{A} B)$ has a Hilbert function over $B$ for all $0 \le i \le r$.
	\end{enumerate}
	Moreover, when any of the above equivalent conditions is satisfied, we have that 
	$$
	h_{\HL^i(M \otimes_{A} B)}(\nu) \;=\; h_{\Ext_{R \otimes_{A} B}^{r-i}(M \otimes_{A} B, R \otimes_{A} B)}(-\nu - \delta) 
	$$ 
	for all $i, \nu \in \ZZ$.
\end{proposition}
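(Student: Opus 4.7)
The plan is to combine a graded local duality spectral sequence with a splitting argument to reduce both conditions to the same flatness statement for a single complex.

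Let $F_\bullet$ be a graded free $R$-resolution of $M$ by modules of finite rank, let $F_\bullet^{\le r+1}$ denote its truncation, and set $P^\bullet := \Hom_R(F_\bullet^{\le r+1}, R(-\delta))$. Each graded piece $[P^i]_\nu$ is a finitely generated free $A$-module. Because $M$ is $A$-flat, $F_\bullet \otimes_A B$ remains a resolution of $M \otimes_A B$, and combined with the vanishing $\HL^j(R \otimes_A B) = 0$ for $j \neq r$, the hypercohomology spectral sequence of $\mathcal{C}_\mm^\bullet \otimes_R F_\bullet \otimes_A B$ collapses to give, for $0 \le i \le r$,
$$
\HL^i(M \otimes_A B) \;\cong\; \HH_{r-i}\big((P^\bullet \otimes_A B)^{*_B}\big),
$$
after using the chain of canonical identifications $\HL^r(F_\bullet) \otimes_A B \cong (P^\bullet \otimes_A B)^{*_B}$ recalled in the excerpt. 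Meanwhile, $\HH^i(P^\bullet \otimes_A B) \cong \Ext^i_{R \otimes_A B}(M \otimes_A B, R \otimes_A B)(-\delta)$ for $0 \le i \le r$. Thus conditions (1) and (2) both become conditions on the cohomology of the single complex $K^\bullet := P^\bullet \otimes_A B$ and that of its $B$-Matlis dual $(K^\bullet)^{*_B}$.

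The crux is then the following complex-theoretic claim: for a bounded complex $K^\bullet$ whose graded pieces $[K^i]_\nu$ are finitely generated free $B$-modules, the graded pieces of $\HH^i(K^\bullet)$ are locally free of finite rank over $B$ for all $i$ and $\nu$ if and only if the same holds for $\HH^i((K^\bullet)^{*_B})$, and in that case $h_{\HH^i((K^\bullet)^{*_B})}(\nu) = h_{\HH^i(K^\bullet)}(-\nu)$. I would prove this by fixing a graded degree $\nu$. If each $[\HH^i(K^\bullet)]_\nu$ is locally free of finite rank, then the bounded complex $[K^\bullet]_\nu$ of f.g.\ free $B$-modules is quasi-isomorphic to the complex of its cohomologies with zero differentials, and splits as a direct sum of that complex and an acyclic complex of f.g.\ free $B$-modules (the splitting holds over any commutative ring via a descending induction on the position, using that the boundaries and cycles are projective because the cohomologies are). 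Applying $\Hom_B(-, B)$ preserves this splitting and hence gives $[\HH^i((K^\bullet)^{*_B})]_{-\nu} \cong \Hom_B([\HH^i(K^\bullet)]_\nu, B)$, again locally free of the same rank. The reverse direction is symmetric because $((K^\bullet)^{*_B})^{*_B} \cong K^\bullet$ in each graded piece.

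Combining the three ingredients while tracking the degree shift by $\delta$ inherent in $\HL^r(R) \cong (R(-\delta))^{*_A}$ gives the claimed equivalence and the formula $h_{\HL^i(M \otimes_A B)}(\nu) = h_{\Ext^{r-i}_{R \otimes_A B}(M \otimes_A B, R \otimes_A B)}(-\nu - \delta)$. The main obstacle I anticipate is justifying the split-exactness of $[K^\bullet]_\nu$ over a base $B$ that is not necessarily Noetherian, and carrying out the descending induction on the cohomological position cleanly. It should help to initialize this induction at the top, where $\cCC^{r+1}(K^\bullet)$ is automatically $B$-flat by \autoref{lem_Ext_base_change}(ii), and then to invoke \autoref{lem_HH_coker_relations} to propagate local-freeness down through all $0 \le i \le r$ on both sides simultaneously.
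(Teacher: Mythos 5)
Your proposal is correct and follows essentially the same route as the paper: the paper delegates the core step to {[FIBER\_FULL, Proposition 2.11]}, and what you have written out — dualize the (truncated) resolution, identify $\HL^i(M\otimes_A B)$ and $\Ext^{r-i}_{R\otimes_A B}(M\otimes_A B, R\otimes_A B)$ via the collapsing spectral sequence and the Matlis-dual identification, then split the complex $[K^\bullet]_\nu$ in each graded degree by a descending induction seeded at $\cCC^{r+1}(K^\bullet) = D_M^{r+1}\otimes_A B$, which is automatically flat by \autoref{lem_Ext_base_change}(ii) — is precisely the content of that delegated step. One cosmetic remark: the acyclic summands produced by the splitting are finitely generated projective, not necessarily free, and the reverse implication also needs $\HH_0\bigl((K^\bullet)^{*_B}\bigr)\cong\HL^r(M\otimes_A B)$ to be among the degrees controlled by hypothesis (1), which it is; neither point affects the argument.
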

\begin{proof}
		Let $F_\bullet: \cdots \rightarrow F_i \rightarrow \cdots \rightarrow  F_1 \rightarrow F_0$ be a graded free $R$-resolution of $M$ by modules of finite rank.
		As $M$ is $A$-flat, $F_\bullet \otimes_{A} B$ is a resolution of $M \otimes_{A} B$.
		Then by using the isomorphism of complexes $\HL^{r}(F_\bullet \otimes_{A} B)  \cong {\big(\Hom_R(F_\bullet
			, R(-\delta)) \otimes_{A} B\big)}^{*_B}$ and the same proof of \cite[Proposition 2.11]{FIBER_FULL}, we obtain that conditions (1) and (2) are equivalent, and that in the case they are satisfied, we have the isomorphism $\HL^i(M \otimes_{A} B) \cong \big(\Ext_{R \otimes_{A} B}^{r-i}(M \otimes_{A} B, R(-\delta) \otimes_{A} B)\big)^{*_B}$.
\end{proof}

For a given finitely generated graded $R$-module $M$ that is $A$-flat and a tuple of functions $\mathbf{h} = (h_0,\ldots,h_r) : \ZZ^{r+1} \rightarrow \NN^{r+1}$, we consider the following functor for any ring $B$, 
$$
\fFLoc_M^{\mathbf{h}}(B) \;:=\;  \left\lbrace \text{morphism } \Spec(B) \rightarrow \Spec(A) \;
\begin{array}{|l}
	\left[\HL^i(M \otimes_{A} B)\right]_\nu \text{ is a locally free $B$-module} \\ 
	\text{of rank $h_i(\nu)$ for all $0 \le i \le r, \nu \in \ZZ$}
\end{array}
\right\rbrace.
$$
Finally, we have below a theorem that gives a flattening stratification for local cohomology modules.

\begin{theorem}
	\label{thm_flat_Loc}
	Let $M$ be a finitely generated graded $R$-module that is a flat $A$-module, and  $\mathbf{h} = (h_0,\ldots,h_r) : \ZZ^{r+1} \rightarrow \NN^{r+1}$ be a tuple of functions.
	Then the functor $\fFLoc_M^{\mathbf{h}}$ is  represented by a locally closed subscheme $\FLoc_M^{\mathbf{h}} \subset \Spec(A)$.
	In other words, for any morphism $g: \Spec(B) \rightarrow \Spec(A)$, each $\HL^i(M \otimes_{A} B)$ has a Hilbert function over $B$ equal to $h_i$ if and only if $g$ can be factored as 
	$$
		\Spec(B)  \; \rightarrow \; \FLoc_{M}^{\mathbf{h}} \; \rightarrow \; \Spec(A).
	$$
\end{theorem}
\begin{proof}
	Let $\mathbf{h}' = (h_0',\ldots,h_r') : \ZZ^{r+1} \rightarrow \NN^{r+1}$ be a tuple of functions defined by $h_i'(\nu) := h_{r-i}(-\nu-\delta)$.
	So, it follows directly from \autoref{prop_equiv_freeness} and \autoref{thm_flat_Ext} that $\fFLoc_M^{\mathbf{h}}$ is represented by the locally closed subscheme $\FLoc_M^{\mathbf{h}} := \FExt_M^{\mathbf{h}'} \subset \Spec(A)$.
\end{proof}

\section{Flattening  stratification of the higher direct images of a sheaf and its twistings} \label{section_strat_sheaf}

In this section, we provide a flattening stratification theorem that deals with all the direct images of a sheaf and its possible twistings. 
This result is the core of our approach to show that the fiber-full scheme exists.

For completeness, we start with a base change result which is probably well-known to the experts, but we could not find it in the generality we need (cf.~\cite[Lemma 4.1]{HARA_CHAR}).
Let $S$ be a scheme and $f : X \subset \PP_S^r \rightarrow S$ be a projective morphism.
Let $g : T \rightarrow S$ be a morphism of schemes and $t \in T$ be a point.
For brevity, we use the notation $X_T := X \times_S T$, $f_{(T)}:= f \times_S T  : X_T \rightarrow T$, $X_t := X_T \times_T \Spec(\kappa(t))$ and $f_{(t)}:= f_{(T)} \times_T \Spec(\kappa(t))  : X_t \rightarrow \Spec(\kappa(t))$, and we consider the commutative diagram
\begin{equation*}		
	\begin{tikzpicture}[baseline=(current  bounding  box.center)]
		\matrix (m) [matrix of math nodes,row sep=3.7em,column sep=5.5em,minimum width=2em, text height=1.5ex, text depth=0.25ex]
		{
			X_t = X_T \times_T \Spec(\kappa(t)) & X_T = X \times_S T &  X = X \times_S S  \\
			\Spec(\kappa(t)) & T& S .\\
		};						
		\path[-stealth]
		(m-1-1) edge node [above] {$1 \times_T \iota_t$} (m-1-2)
		(m-2-1) edge node [above] {$\iota_t$} (m-2-2)
		(m-1-1) edge node [left] {$f_{(t)}$} (m-2-1)
		(m-1-2) edge node [above] {$1 \times_S g$} (m-1-3)
		(m-2-2) edge node [above] {$g$} (m-2-3)
		(m-1-3) edge node [right] {$f$} (m-2-3)
		(m-1-2) edge node [right] {$f_{(T)}$} (m-2-2)
		;
	\end{tikzpicture}	
\end{equation*}
For a quasi-coherent sheaf $\FF$ on $X$, let $\FF_T := (1 \times_S g)^* \FF$ be the sheaf on $X_T$ obtained by the pull-back induced by $g$ and $\FF_t := (1 \times_T \iota_t)^*\FF_T$ be the sheaf on $X_t$ obtained by taking the fiber over $t$.
In this setting, we have the base change map $g^*R^if_*\FF \rightarrow R^i{{f_{(T)}}_*}\left(\FF_T\right)$ for all $i \ge 0$.

\begin{proposition}
	\label{prop_base_change_sheaf}
	Let $S$ be a scheme, $f : X \subset \PP_S^r \rightarrow S$ be a projective morphism and $\FF$ be a quasi-coherent $\OO_X$-module.
	Suppose that $R^if_*\left(\FF(\nu)\right)$ is a flat $\OO_S$-module for all $0 \le i \le r, \nu \in \ZZ$.
	Let $g : T \rightarrow S$ be a morphism of schemes.
	Then $\FF$ is flat over $S$ and we have a base change isomorphism 
	$$
	g^*R^if_*\left(\FF(\nu)\right) \xrightarrow{\cong} R^i{{f_{(T)}}_*}\left(\FF_T(\nu)\right)
	$$ 
	for all $0 \le i \le r, \nu \in \ZZ$.
\end{proposition}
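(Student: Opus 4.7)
The plan is to reduce to the affine setting and then apply the local cohomology base change of \autoref{lem_base_change_loc}. Since both conclusions are local on $S$ and $T$, we may assume $S = \Spec(A)$ and $T = \Spec(B)$. The closed immersion $\iota: X \hookrightarrow \PP^r_S$ is affine, so $R^j\iota_* = 0$ for $j \ge 1$, and the projection formula gives $\iota_*(\FF(\nu)) \cong (\iota_*\FF)(\nu)$; replacing $\FF$ by $\iota_*\FF$ reduces us to the case $X = \PP^r_S$. Write $R = A[x_0,\ldots,x_r]$ with irrelevant ideal $\mm = (x_0,\ldots,x_r)$ and set $M := \Gamma_*(\FF) = \bigoplus_{\nu \in \ZZ} H^0(\PP^r_A, \FF(\nu))$, so that $\widetilde{M} = \FF$.

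The Serre--Grothendieck correspondence yields $\HL^0(M) = \HL^1(M) = 0$ together with isomorphisms $[\HL^{i+1}(M)]_\nu \cong H^i(\PP^r_A, \FF(\nu)) = R^if_*(\FF(\nu))$ for $i \ge 1$ and $\nu \in \ZZ$. Hence the hypothesis forces each $[M]_\nu = R^0f_*(\FF(\nu))$ to be $A$-flat, so $M$ itself is $A$-flat, and each $\HL^i(M)$ is $A$-flat for $0 \le i \le r+1$. Applying \autoref{lem_base_change_loc} to $M$ over the polynomial ring on the $r+1$ variables $x_0,\ldots,x_r$ produces base change isomorphisms $\HL^i(M) \otimes_A B \xrightarrow{\cong} \HL^i(M \otimes_A B)$ for all $i$. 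In particular $\HL^0(M \otimes_A B) = \HL^1(M \otimes_A B) = 0$, so $M \otimes_A B$ is saturated, which identifies it with $\Gamma_*(\widetilde{M \otimes_A B}) = \Gamma_*(\FF_T)$ as graded $R \otimes_A B$-modules. Extracting graded pieces then gives the desired isomorphism $R^if_*(\FF(\nu)) \otimes_A B \xrightarrow{\cong} R^if_{(T),*}(\FF_T(\nu))$ for every $i$ and $\nu$.

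For the flatness of $\FF$ over $S$, one uses that on the standard affine $D_+(x_i) \subset \PP^r_A$ the sheaf $\FF$ is the quasi-coherent sheaf associated to the degree-zero part of the localization $M[x_i^{-1}]$, which is an $A$-flat direct summand of the $A$-flat module $M[x_i^{-1}]$. The main technical subtlety in the proof is justifying the passage from $M \otimes_A B$ to $\Gamma_*(\FF_T)$: one must know that $M \otimes_A B$ remains saturated, i.e., that the vanishings $\HL^0 = \HL^1 = 0$ persist after base change, and this is exactly what \autoref{lem_base_change_loc} delivers once the $A$-flatness of all $\HL^i(M)$ is established from the assumption on the higher direct images.
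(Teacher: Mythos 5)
Your proof is correct and follows essentially the same route as the paper: reduce to the affine case $S=\Spec(A)$, $T=\Spec(B)$, set $M = \bigoplus_\nu H^0(\PP^r_A,\FF(\nu))$, use the Serre--Grothendieck exact sequence to deduce $\HL^0(M)=\HL^1(M)=0$ together with $A$-flatness of all $\HL^i(M)$ from the hypothesis, and then invoke \autoref{lem_base_change_loc} to get the local-cohomology base change, from which the sheaf-cohomology isomorphism follows degree-by-degree. Your explicit reduction from $X$ to $\PP^r_S$ via the affine morphism $\iota$ and your spelled-out flatness argument for $\FF$ are just more detailed versions of steps the paper leaves implicit.
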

\begin{proof}
	Since the first consequence is local on $S$ and the second one is local on $T$, we may assume that $T  = \Spec(B)$ and $S = \Spec(A)$ are affine schemes.
	Then we have the identifications
	$$
	R^if_*\left(\FF(\nu)\right) \cong \HH^i(X, \FF(\nu))^\thicksim
	\cong \HH^i(\PP_A^r, \FF(\nu))^\thicksim
	$$ 
	and 
	$$  
	 R^i{{f_{(T)}}_*}\left(\FF_T(\nu)\right) \cong \HH^i(X_T, \FF_T(\nu))^\thicksim \cong \HH^i(\PP_B^r, \FF_T(\nu))^\thicksim
	$$
	(see \cite[\href{https://stacks.math.columbia.edu/tag/01XK}{Tag 01XK}]{stacks-project}, \cite[Proposition 8.5]{HARTSHORNE}).
	Let $R := A[x_0,\ldots,x_r]$ with $\PP_A^r = \Proj(R)$, $\mm = (x_0,\ldots,x_r)$, and $M$ be the graded $R$-module given by $M := \bigoplus_{\nu \in \ZZ} \HH^0(\PP_A^r, \FF(\nu))$.
	Note that $\FF \cong M^\thicksim$ and $\FF_T \cong \left(M \otimes_{A} B\right)^\thicksim$.
	Thus, it is clear that $\FF$ is flat over $S$.
	We have the exact sequence 
	$$
	0 \rightarrow \HL^0(M \otimes_{A} B) \rightarrow M \otimes_{A} B \rightarrow \bigoplus_{\nu \in \ZZ} \HH^0(\PP_B^r, \FF_T(\nu)) \rightarrow \HL^1(M \otimes_{A} B) \rightarrow 0
	$$ 
	and the isomorphism $\HL^{i+1}(M \otimes_{A} B) \cong \bigoplus_{\nu \in \ZZ} \HH^i(\PP_B^r, \FF_T(\nu))$ for all $i \ge 1$.
	In the special case $B = A$, since $M = \bigoplus_{\nu \in \ZZ} \HH^0(\PP_A^r, \FF(\nu))$, we obtain that $\HL^0(M) = \HL^1(M) = 0$.
	Finally, \autoref{lem_base_change_loc} implies that $\HL^i(M) \otimes_{A} B \xrightarrow{\cong} \HL^i(M \otimes_{A} B)$ for all $0 \le i \le r+1$, and so the proof of the proposition is complete.
\end{proof}

We fix the following setup for the rest of this section.
\begin{setup}
	\label{setup_sheaf_flat}
	Let $S$ be a locally Noetherian scheme and $f : X \subset \PP_S^r \rightarrow S$ be a projective morphism.
\end{setup}

When we take the fiber $X_t = X_T \times_T \Spec(\kappa(t))$ of $f_{(T)}$ over $t \in T$, we get the isomorphism 
$$
R^i{{f_{(t)}}_*}\left(\FF_t\right) \; \cong \; \HH^i\left(X_t,\FF_t\right)^\thicksim
$$
for all $i \ge 0$.
Our main object of study is the following functor.
For a given coherent sheaf $\FF$ on $X$ that is $S$-flat and a tuple of functions $\mathbf{h} = (h_0,\ldots,h_r) : \ZZ^{r+1} \rightarrow \NN^{r+1}$, we consider the following functor for any scheme $T$, 
$$
\fFDir_\FF^{\mathbf{h}}(T) \;:=\;  \left\lbrace \text{morphism } T \rightarrow S \;
\begin{array}{|l}
	R^i{{f_{(T)}}_*}\left(\FF_T(\nu)\right) \text{ is locally free over $T$ and} \\ 
	\text{$\dim_{\kappa(t)}\left(\HH^i\left(X_t,\FF_t(\nu)\right)\right) = h_i(\nu)$}\\
	\text{for all $0 \le i \le r, \nu \in \ZZ, t \in T$}
\end{array}
\right\rbrace.
$$
This is a functor from (locally Noetherian) $S$-schemes to sets.
As a consequence of \autoref{prop_base_change_sheaf}, a morphism $T \rightarrow S$  belongs to the set $\fFDir_\FF^{\mathbf{h}}(T)$ if and only if $R^i{{f_{(T)}}_*}\left(\FF_T(\nu)\right)$ is a locally free $\OO_T$-module of rank $h_i(\nu)$ for all $0 \le i \le r, \nu \in \ZZ$.
The following theorem yields the representability of the functor $\fFDir_\FF^{\mathbf{h}}$.
This result will be our main tool.

\begin{theorem}
	\label{thm_flat_Sheaf}
	Let $\FF$ be a coherent sheaf on $X$ that is flat over $S$, and  $\mathbf{h} = (h_0,\ldots,h_r) : \ZZ^{r+1} \rightarrow \NN^{r+1}$ be a tuple of functions.
	Then the functor $\fFDir_\FF^{\mathbf{h}}$ is  represented by a locally closed subscheme $\FDir_\FF^{\mathbf{h}} \subset S$.
		In other words, for any morphism $g: T \rightarrow S$ of schemes, each $R^i{{f_{(T)}}_*}\left(\FF_T(\nu)\right)$ is a locally free $\OO_T$-module of rank $h_i(\nu)$  if and only if $g$ can be factored as 
		$$
		T  \; \rightarrow \; \FDir_\FF^{\mathbf{h}} \; \rightarrow \; S.
		$$
\end{theorem}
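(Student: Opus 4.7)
The question is local on $S$, so I reduce to $S = \Spec(A)$ with $A$ Noetherian. The closed immersion $X \hookrightarrow \PP_S^r$ has vanishing higher direct images, so pushing $\FF$ forward does not affect the $R^i f_*(\FF(\nu))$, and I may replace $X$ by $\PP_S^r$. Setting $R = A[x_0,\ldots,x_r]$, I choose a finitely generated graded $R$-module $M$ with $M^\thicksim = \FF$ that is moreover $A$-flat; such $M$ is obtained by truncating the saturated module $\bigoplus_\nu \HH^0(\PP_A^r, \FF(\nu))$ at a sufficiently high degree, since Serre vanishing combined with cohomology-and-base-change for the $S$-flat sheaf $\FF$ guarantees that $[M]_\nu$ is locally free over $A$ in the retained degrees.

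The heart of the argument is the translation already used in the proof of \autoref{prop_base_change_sheaf}: for any $A$-algebra $B$, there are canonical identifications
\begin{equation*}
[\HL^{i+1}(M \otimes_A B)]_\nu \;\cong\; \HH^i\bigl(\PP_B^r, \FF_T(\nu)\bigr) \qquad (i \ge 1, \, \nu \in \ZZ),
\end{equation*}
together with the canonical four-term exact sequence
\begin{equation*}
0 \to \HL^0(M \otimes_A B) \to M \otimes_A B \to \bigoplus_{\nu} \HH^0\bigl(\PP_B^r, \FF_T(\nu)\bigr) \to \HL^1(M \otimes_A B) \to 0.
\end{equation*}
These transport the condition defining $\fFDir_\FF^{\mathbf{h}}$ into conditions on the Hilbert functions of the graded components of $M \otimes_A B$ and of the local cohomology modules $\HL^i(M \otimes_A B)$.

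I would then combine \autoref{thm_flat_Loc}, applied to the $A$-flat module $M$ to stratify by Hilbert functions of all $\HL^i(M \otimes_A B)$, with \autoref{thm_stratify_mod} applied to $M$ itself, to stratify by the Hilbert function of $M \otimes_A B$. For $i \ge 1$, local freeness of $R^i f_{(T)*}(\FF_T(\nu))$ of rank $h_i(\nu)$ is directly an imposed Hilbert function on the $(i+1)$-th graded local cohomology. For $i = 0$, the four-term exact sequence dictates which combinations of Hilbert functions on $\HL^0(M \otimes_A B)$, $\HL^1(M \otimes_A B)$, and $M \otimes_A B$ are compatible with the prescribed $h_0$; intersecting the corresponding locally closed subschemes produces the candidate $\FDir_\FF^{\mathbf{h}} \subset \Spec(A)$.

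The main obstacle is the bookkeeping in the $i = 0$ case: translating ``$\HH^0(\PP_B^r, \FF_T(\nu))$ is locally free of rank $h_0(\nu)$ for all $\nu$'' into a compatible triple of Hilbert function data for $\HL^0(M \otimes_A B)$, $\HL^1(M \otimes_A B)$, and $M \otimes_A B$, and verifying that the finitely many compatible combinations yield a single locally closed representing subscheme rather than an unwieldy union. The crucial input is that $\HL^0(M \otimes_A B)$ and $\HL^1(M \otimes_A B)$ vanish and $[M \otimes_A B]_\nu$ is already locally free of rank $h_0(\nu)$ for $\nu$ beyond the Mumford regularity of $\FF$ over $S$ (uniformly in $B$), so that the remaining low-degree bookkeeping involves only finitely many possibilities by \autoref{thm_stratify_mod}(ii).
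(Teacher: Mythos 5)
Your plan is correct and matches the paper's proof essentially line for line: reduce to affine Noetherian $S$ and affine $T$, take $M := \bigoplus_{\nu \ge m} \HH^0(\PP_A^r, \FF(\nu))$ for $m$ large, use $\HL^{i+1}(M \otimes_A B) \cong \bigoplus_\nu \HH^i(\PP_B^r, \FF_T(\nu))$ for $i \ge 1$ together with the four-term sequence in degree $0$, and intersect the locally closed subschemes from \autoref{thm_stratify_mod} and \autoref{thm_flat_Loc}. The ``unwieldy union'' worry you flag in the $i=0$ bookkeeping evaporates once $m$ is chosen as in the paper: then $\HL^0(M \otimes_A B) = 0$ identically, $[M \otimes_A B]_\nu = \HH^0(\PP_B^r, \FF_T(\nu))$ for $\nu \ge m$, and $[\HL^1(M \otimes_A B)]_\nu = \HH^0(\PP_B^r, \FF_T(\nu))$ for $\nu < m$, so $h_0$ splits canonically and uniquely into a high-degree part imposed on $M \otimes_A B$ and a low-degree part imposed on $\HL^1$, yielding a single intersection $F_M^{h_0''} \cap \FLoc_M^{\mathbf{h}'}$ with no union needed.
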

\begin{proof}
	Let $S = \bigcup_{j \in J} S_j$ be an open covering of $S$ where each $S_j$ is a Noetherian affine scheme.
	The functor $\fFDir_\FF^{\mathbf{h}}$ is a Zariski sheaf and it has a Zariski covering by the open subfunctors $\{ \mathcal{G}_j\}_{j \in J}$ where 
	$$
	\GG_j(T) \;:=\;  \left\lbrace \text{morphism } T \rightarrow S_j \;
	\begin{array}{|l}
		R^i{{f_{(T)}}_*}\left(\FF_T(\nu)\right) \text{ is locally free over $T$ and} \\ 
		\text{$\dim_{\kappa(t)}\left(\HH^i\left(X_t,\FF_t(\nu)\right)\right) = h_i(\nu)$}\\
		\text{for all $0 \le i \le r, \nu \in \ZZ, t \in T$}
	\end{array}
	\right\rbrace
	$$
	(see \cite[\S 8.3]{GORTZ_WEDHORN}).
	Therefore, due to \cite[Theorem 8.9]{GORTZ_WEDHORN}, in order to show that $\fFDir_\FF^\mathbf{h}$ is representable by a locally closed subscheme of $S$, it suffices to show that each $\GG_j$ is representable by a locally closed subscheme of $S_j$.
	
	As a consequence of the above reductions, we assume that $A$ is a Noetherian ring and $S=\Spec(A)$.
	Since all the conditions that we consider on $R^i{{f_{(T)}}_*}\left(\FF_T(\nu)\right)$ are local on $T$, we may restrict to an affine morphism $T = \Spec(B) \rightarrow \Spec(A)$, and we do so.
	
	Let $R := A[x_0,\ldots,x_r]$ with $\PP_A^r=\Proj(R)$ and $\mm=(x_0,\ldots,x_r) \subset R$.
	By known arguments, we can choose an integer $m \in \ZZ$ such that the following conditions are satisfied:
	\begin{enumerate}[(i)]
		\item $M := \bigoplus_{\nu \ge m} \HH^0(\PP_A^r, \FF(\nu))$ is a finitely generated graded $R$-module that is flat over $A$,
		\item $M^\thicksim \cong \FF$ and $\left(M \otimes_{A} B\right)^\thicksim \cong \FF_T$,
		\item $M \otimes_{A} B \cong \bigoplus_{\nu \ge m} \HH^0(\PP_B^r, \FF_T(\nu))$, and
		\item $\HH^i(\PP_A^r, \FF(\nu)) = 0$ for all $1 \le i \le r, \nu \ge m$
	\end{enumerate}
	(see, e.g., \cite[\S III.9]{HARTSHORNE}).
	Therefore, we obtain a short exact sequence 
	$$
	0 \rightarrow M \otimes_A B \rightarrow \bigoplus_{\nu \in \ZZ} \HH^0(\PP_B^r, \FF_T(\nu)) \rightarrow \HL^1(M \otimes_{A} B) \rightarrow 0
	$$ 
	that splits into the isomorphisms 
	$$
	M \otimes_A B \cong \bigoplus_{\nu \ge m} \HH^0(\PP_B^r, \FF_T(\nu)) \quad \text{ and } \quad \bigoplus_{\nu < m} \HH^0(\PP_B^r, \FF_T(\nu)) \cong \HL^1(M \otimes_{A} B),
	$$
	and we get the isomorphism $\HL^{i+1}(M \otimes_{A} B) \cong \bigoplus_{\nu \in \ZZ} \HH^i(\PP_B^r, \FF_T(\nu))$ for all $i \ge 1$. 
	
	We have obtained that $\HH^i(\PP_B^r, \FF_T(\nu))$ is a locally free $B$-module of rank $h_i(\nu)$ for all $i \ge 0, \nu \in \ZZ$ if and only if the following three conditions hold:
	\begin{itemize}[--]
		\item $[M \otimes_A B]_\nu$ is a locally free $B$-module of rank $h_0(\nu)$ for all $\nu \ge m$,
		\item $[\HL^1(M \otimes_A B)]_\nu$ is a locally free $B$-module of rank $h_0(\nu)$ for all $\nu < m$, and
		\item $[\HL^i(M \otimes_A B)]_\nu$ is a locally free $B$-module of rank $h_{i-1}(\nu)$ for all $i \ge 2, \nu \in \ZZ$.
	\end{itemize}  
	Let $h_0', h_0'' : \ZZ \rightarrow \NN$ be the functions 
	$$
	h_0'(\nu) := \begin{cases}
			h_0(\nu) \quad \text{ if } \nu < m\\
			0 \qquad\quad \text{ otherwise}
	\end{cases}
	\quad \text{ and } \qquad 
	h_0''(\nu) := \begin{cases}
		h_0(\nu) \quad \text{ if } \nu \ge m\\
		0 \qquad\quad \text{ otherwise,}
	\end{cases}
	$$
	and $\mathbf{h}' : \ZZ^{r+2} \rightarrow \NN^{r+2}$ be the tuple of functions defined by  $\mathbf{h}' := (0, h_0',h_1,\ldots,h_r)$, where $0 : \ZZ \rightarrow \NN$ denotes the zero function.
	
	Finally, by \autoref{thm_stratify_mod} and \autoref{thm_flat_Loc}, we obtain that each $\HH^i(\PP_B^r, \FF_T(\nu))$ is a locally free $B$-module of rank $h_i(\nu)$ if and only if the morphism $g : T = \Spec(B) \rightarrow S = \Spec(A)$ factors through the locally closed subscheme $F_M^{h_0''} \cap \FLoc_{M}^{\mathbf{h}'} \subset S = \Spec(A)$. 
	This concludes the proof of the theorem.
\end{proof}

\section{Fiber-full sheaves} \label{section_fiber_full_sheaf}

In this short section, we introduce the notion of fiber-full sheaf that extends the concept of fiber-full modules from \cite{FIBER_FULL}.
Let $S$ be a locally Noetherian scheme, $f : X \subset \PP_S^r \rightarrow S$ be a projective morphism, and $\FF$ be a coherent sheaf on $X$. 

\begin{definition}
	We say that $\FF$ is a \emph{fiber-full sheaf over $S$} if $R^i{{f}_*}\left(\FF(\nu)\right)$ is locally free over $S$ for all $0 \le i \le r$ and $\nu \in \ZZ$.
\end{definition}

For every $s \in S$ and $q \ge 1$, let $g_{s,q}$ be the natural map $g_{s,q} : \Spec(\OO_{S,s}/ \mm_s^q) \rightarrow S$ where $\mm_s$ denotes the maximal ideal of the local ring $\OO_{S,s}$, $X_{s,q}$ be the scheme $X_{s,q} := X \times_S \Spec(\OO_{S,s}/ \mm_s^q)$, and $\FF_{s,q}:= (1 \times_S g_{s,q})^*\FF$ be the sheaf on $X_{s,q}$ obtained by the pull-back induced by $g_{s,q}$.
For the case $q=1$ (i.e., when we take the fiber at a point $s \in S$), we simply write $g_s = g_{s,1}$, $X_s = X_{s,1}$ and $\FF_s = F_{s,1}$.
The following theorem gives two further equivalent definitions for the notion of a fiber-full sheaf. 
The name ``fiber-full'' is inspired by condition (3) below.

\begin{theorem}
	\label{thm_fib_full_sheaf}
	The following three conditions are equivalent:
	\begin{enumerate}[\rm (1)]
		\item $\FF$ is a fiber-full sheaf over $S$.
		\item $\FF$ is a locally free $\OO_S$-module and $\HH^i(X_{s,q}, \FF_{s,q}(\nu))$ is a free $\OO_{S,s}/ \mm_s^q$-module for all $s \in S$, $0 \le i \le r$, $\nu \in \ZZ$ and $q \ge 1$.
		\item $\FF$ is a locally free $\OO_S$-module and the natural map $\HH^i(X_{s,q}, \FF_{s,q}(\nu)) \rightarrow \HH^i(X_{s}, \FF_s(\nu))$ is surjective for all $s \in S$, $0 \le i \le r$, $\nu \in \ZZ$ and $q \ge 1$.
	\end{enumerate}
\end{theorem}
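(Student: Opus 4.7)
The plan is to pass from the sheaf setting to a graded-module setting via Serre's correspondence and then invoke the analogous equivalence for modules established in \cite{FIBER_FULL}. The key tools are \autoref{prop_base_change_sheaf}, \autoref{lem_base_change_loc}, \autoref{prop_equiv_freeness}, and the sheaf-to-module cohomological dictionary used in the proof of \autoref{thm_flat_Sheaf}.

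Since all three conditions are local on $S$, I would reduce to the case $S = \Spec(A)$ with $A$ Noetherian. Setting $R = A[x_0,\ldots,x_r]$ and $\mm = (x_0,\ldots,x_r)$, Serre's theorem associates to $\FF$ a finitely generated graded $R$-module $M$ with $M^\thicksim \cong \FF$ and $[M]_\nu \cong \HH^0(X, \FF(\nu))$ for $\nu \gg 0$. Then, exactly as in the proof of \autoref{thm_flat_Sheaf}, for any $A$-algebra $B$ we have
$$
\HL^{i+1}(M \otimes_A B) \;\cong\; \bigoplus_{\nu \in \ZZ} \HH^i(\PP_B^r, (\FF \otimes_A B)(\nu)) \qquad (i \ge 1),
$$
together with a four-term exact sequence controlling $\HH^0$. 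This machinery translates conditions (1)--(3) into analogous statements about the graded local cohomology of $M$ and its base changes.

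For (1) $\Rightarrow$ (2), assume $R^i f_*(\FF(\nu))$ is locally free. Then \autoref{prop_base_change_sheaf} provides the base change isomorphism $g^*R^i f_*(\FF(\nu)) \xrightarrow{\cong} R^i f_{(T)*}(\FF_T(\nu))$ for any $g: T \rightarrow S$; specializing to $g_{s,q}:\Spec(\OO_{S,s}/\mm_s^q)\rightarrow S$ shows $\HH^i(X_{s,q},\FF_{s,q}(\nu))$ is free over $\OO_{S,s}/\mm_s^q$. The flatness of $\FF$ over $S$ (the ``locally free $\OO_S$-module'' hypothesis, interpreted as $S$-flatness) follows because $f_*(\FF(\nu))$ is locally free for $\nu \gg 0$, which forces $[M]_\nu$ to be $A$-flat in high degree. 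The step (2) $\Rightarrow$ (3) is then straightforward: \autoref{lem_base_change_loc}, applied under the flatness of the local cohomology pieces, yields the base change identification $\HH^i(X_{s,q},\FF_{s,q}(\nu)) \otimes_{\OO_{S,s}/\mm_s^q} \kappa(s) \cong \HH^i(X_s,\FF_s(\nu))$, so the natural map is the reduction of a free module modulo its maximal ideal, hence surjective.

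The decisive direction is (3) $\Rightarrow$ (1). Via the dictionary above it becomes the surjectivity statement
$$
\bigl[\HL^i(M \otimes_A \OO_{S,s}/\mm_s^q)\bigr]_\nu \;\twoheadrightarrow\; \bigl[\HL^i(M \otimes_A \kappa(s))\bigr]_\nu
$$
for all $s \in S$, $q \ge 1$, $i$, and $\nu$. I would then invoke the module-theoretic characterization of fiber-full graded modules from \cite{FIBER_FULL}, which asserts that such surjectivity at every infinitesimal neighborhood forces $[\HL^i(M)]_\nu$ to be a locally free $A$-module for all $i$ and $\nu$; translating back via the dictionary recovers condition (1). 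The main obstacle is precisely in this last step: checking that our setup falls inside the hypotheses of the module result in \cite{FIBER_FULL} and that the chain of equivalences there --- going through the duality between $\HL^i$ and $\Ext_R^{r-i}$ recorded in \autoref{prop_equiv_freeness}, Fitting-ideal arguments, and a Nakayama-style lifting from the residue field to $\OO_{S,s}/\mm_s^q$ --- indeed upgrades pointwise surjectivity to global freeness. Should any gap appear in a direct appeal, one could reprove the implication by combining \autoref{thm_flat_Loc} with \autoref{prop_equiv_freeness} and an induction on the nilpotency index $q$.
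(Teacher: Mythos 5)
Your proposal is correct and follows essentially the same route as the paper: reduce to an affine (and effectively local) Noetherian base, pass to the graded module $M$ via Serre's correspondence, translate sheaf cohomology of twists into graded local cohomology, and delegate the substantive module-level equivalence to \cite[Theorem A]{FIBER_FULL}. The only difference is cosmetic: the paper cites \cite[Theorem A]{FIBER_FULL} once for the equivalence of all three conditions, while you unwind (1) $\Rightarrow$ (2) $\Rightarrow$ (3) explicitly from \autoref{prop_base_change_sheaf} and \autoref{lem_base_change_loc} and only appeal to the module theorem for the decisive implication (3) $\Rightarrow$ (1), which makes the argument slightly more self-contained but does not change its substance.
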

\begin{proof}
	Since the three conditions are local on $S$, we can choose a point $s \in S$ and assume  that $(B,\bb) = (\OO_{S,s}, \mm_s)$ is a Noetherian local ring and $S = \Spec(B)$.
	Moreover, in each of the three above conditions one is assuming that $\FF$ is flat over $S$.
	Let $R := B[x_0,\ldots,x_r]$ with $\PP_B^r=\Proj(R)$ and $\mm=(x_0,\ldots,x_r) \subset R$.
	Then we can choose an integer $m \in \ZZ$ such that the following conditions are satisfied:
	\begin{enumerate}[(i)]
		\item $M := \bigoplus_{\nu \ge m} \HH^0(\PP_B^r, \FF(\nu))$ is a finitely generated graded $R$-module that is flat over $B$,
		\item $M^\thicksim \cong \FF$ and $\left(M \otimes_{B} B/\bb^q\right)^\thicksim \cong \FF_{s,q}$, and
		\item $M \otimes_{B} B/\bb^q \cong \bigoplus_{\nu \ge m} \HH^0(\PP_{B/\bb^q}^r, \FF_{s,q}(\nu))$.
	\end{enumerate}
	Similar to the proof of \autoref{thm_flat_Sheaf}, by using the relations between local and sheaf cohomologies,  the equivalence of the three conditions follows directly from \cite[Theorem A]{FIBER_FULL}.
\end{proof}

\section{Construction of the fiber-full scheme} \label{section_fiber_full_scheme}

In this section, we construct the \emph{fiber-full scheme} which can be seen as a parameter space that generalizes the Hilbert and Quot schemes and that controls all the cohomological data instead of just the corresponding Hilbert polynomial. 
We also construct open subschemes of the fiber-full scheme that parametrize arithmetically Cohen-Macaulay and arithmetically Gorenstein schemes.
We use the following setup throughout this section. 

\begin{setup}
	\label{setup_fib_full}
	Let $S$ be a locally Noetherian scheme, $f : X \subset \PP_S^r \rightarrow S$ be a projective morphism, and $\FF$ be a coherent sheaf on $X$. 
\end{setup}

We define the \emph{fiber-full functor} which for an $S$-scheme $T$ parametrizes all coherent quotients $\FF_T \twoheadrightarrow \GG$ such that all higher direct images of $\GG$ and its twistings are locally over $T$.
That is, we define the following map for any (locally Noetherian)  $S$-scheme $T$:
$$
\fFib_{\FF/X/S}(T) \;:=\; \left\lbrace \text{coherent quotient $\FF_T \twoheadrightarrow \GG$} \;
\begin{array}{|l}
	 R^i{{f_{(T)}}_*}\left(\GG(\nu)\right) \text{ is locally free over $T$}\\
	 \text{for all $0 \le i \le r, \nu \in \ZZ$}
\end{array}
 \right\rbrace.
$$
One important basic thing about this map is the next lemma, which tells us that 
$$
\fFib_{\FF/X/S} \;: \; (\text{Sch}/S)^\text{opp} \rightarrow (\text{Sets})
$$ 
is a contravariant functor from the category of (locally Noetherian)  $S$-schemes to the category of sets.

\begin{lemma} 
	\label{lem_base_change_Fib}
	Let $g : T' \rightarrow T$ be morphism of (locally Noetherian)  $S$-schemes.
	Then we have a natural map
	$$
	 \fFib_{\FF/X/S}(g)  \;:\; \fFib_{\FF/X/S}(T) \rightarrow \fFib_{\FF/X/S}(T'), \qquad \GG \mapsto (1 \times_T g)^*\GG,
	$$
	where $(1 \times_T g)^* \GG$ is the sheaf on $X_{T'}$ obtained by the pull-back induced by $g$.
\end{lemma}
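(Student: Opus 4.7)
The plan is to verify the two conditions that are packaged in the definition of $\fFib_{\FF/X/S}(T')$: namely that the image is indeed a coherent quotient of $\FF_{T'}$, and that all its higher direct images after twisting are locally free over $T'$.

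First I would check that $(1 \times_T g)^* \GG$ is a coherent quotient of $\FF_{T'}$. Since $(1 \times_T g)^*$ is right exact and preserves coherence, applying it to the surjection $\FF_T \twoheadrightarrow \GG$ yields a surjection $(1 \times_T g)^* \FF_T \twoheadrightarrow (1 \times_T g)^* \GG$. The tower $T' \xrightarrow{g} T \to S$ together with the transitivity of pullback identifies $(1 \times_T g)^*\FF_T$ with $\FF_{T'}$, so this is indeed a coherent quotient of $\FF_{T'}$. Moreover, the twist commutes with pullback from the ambient projective space, giving the natural identification $(1 \times_T g)^*\bigl(\GG(\nu)\bigr) \cong \bigl((1 \times_T g)^*\GG\bigr)(\nu)$ for every $\nu \in \ZZ$.

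Next I would apply \autoref{prop_base_change_sheaf} with $T$ in the role of the base, $\GG$ in the role of the sheaf, and $g : T' \to T$ in the role of the morphism. The hypothesis of the proposition requires that $R^i {f_{(T)}}_*\bigl(\GG(\nu)\bigr)$ be flat over $\OO_T$ for all $0 \le i \le r$ and $\nu \in \ZZ$; this is guaranteed because $\GG \in \fFib_{\FF/X/S}(T)$ says these sheaves are even locally free over $T$. The proposition then provides base change isomorphisms
$$
g^{*} R^i {f_{(T)}}_*\bigl(\GG(\nu)\bigr) \;\xrightarrow{\cong}\; R^i {f_{(T')}}_*\Bigl(\bigl((1 \times_T g)^*\GG\bigr)(\nu)\Bigr)
$$
for all $0 \le i \le r$ and $\nu \in \ZZ$. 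Since $g^*$ carries locally free $\OO_T$-modules to locally free $\OO_{T'}$-modules, the right-hand side is locally free over $T'$, which is exactly the condition for $(1 \times_T g)^*\GG$ to lie in $\fFib_{\FF/X/S}(T')$.

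There is no real obstacle here: the entire lemma is a formal consequence of \autoref{prop_base_change_sheaf}, and the only items to be careful about are the bookkeeping identifications of pullback sheaves along the tower $T' \to T \to S$ and the compatibility of pullback with the twist $\GG \mapsto \GG(\nu)$. Naturality of the assignment $\GG \mapsto (1 \times_T g)^*\GG$ (which is implicit in the statement) is automatic from the functoriality of pullback.
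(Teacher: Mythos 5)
Your proof is correct and takes the same route as the paper: the paper's proof is simply that the lemma is a direct consequence of \autoref{prop_base_change_sheaf}, and you have spelled out the bookkeeping (pullback composition, compatibility with twisting, and locally free being preserved under $g^*$) that makes that reduction work.
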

\begin{proof}
	This is a direct consequence of \autoref{prop_base_change_sheaf}.
\end{proof}

We now stratify this functor in terms of ``Hilbert functions'' for all the cohomologies.
Let $\mathbf{h} = (h_0,\ldots,h_r) : \ZZ^{r+1} \rightarrow \NN^{r+1}$ be a tuple of functions.
Then we define the following functor depending on $\mathbf{h}$: 
$$
\fFib^{\mathbf{h}}_{\FF/X/S}(T) \;:=\; 
\left\lbrace \GG \in \fFib_{\FF/X/S}(T) \;\;
\begin{array}{|l}
	\text{$\dim_{\kappa(t)}\left(\HH^i\left(X_t,\GG_t(\nu)\right)\right) = h_i(\nu)$}\\
	\text{for all $0 \le i \le r, \nu \in \ZZ, t \in T$}
\end{array}
\right\rbrace.
$$
The idea of this functor is to measure the dimension of \emph{all cohomologies of all possible twistings}.
Of course, we obtain the  following stratification 
$$
\fFib_{\FF/X/S}(T) \;=\; \bigsqcup_{\mathbf{h} : \ZZ^{r+1} \rightarrow \NN^{r+1}} \fFib^{\mathbf{h}}_{\FF/X/S}(T)
$$
when $T$ is connected.
Therefore, $\fFib_{\FF/X/S}(T)$ is a representable functor if all the functors $\fFib^{\mathbf{h}}_{\FF/X/S}(T)$ are representable.
When $\FF = \OO_X$, we simplify the notation by writing $\fFib^{\mathbf{h}}_{X/S}$, and we obtain the following alternative description of significant interest
$$
\fFib_{X/S}^\mathbf{h}(T) \;:= \;  \left\lbrace \text{closed subscheme } Z \subset X_T \;
\begin{array}{|l}
	R^i{{f_{(T)}}_*}\left(\OO_Z(\nu)\right) \text{ is locally free over $T$ and} \\ 
	\text{$\dim_{\kappa(t)}\left(\HH^i\left(Z_t,\OO_{Z_t}(\nu)\right)\right) = h_i(\nu)$}\\
	\text{for all $0 \le i \le r, \nu \in \ZZ, t\in T$}
\end{array}
\right\rbrace.
$$
These functors should be thought of as a refinement of the Hilbert and Quot functors in the following sense.

\begin{remark}
	\label{rem_relation_Fib_Hilb}
	Let $\mathbf{h} = (h_0,\ldots,h_r) : \ZZ^{r+1} \rightarrow \NN^{r+1}$ be a tuple of functions and suppose that $P_\mathbf{h} := \sum_{i=0}^{r} (-1)^i h_i \in \QQ[m]$ is a numerical polynomial.
	Then we automatically obtain the following inclusions 
	$$
	\fFib^{\mathbf{h}}_{X/S}(T) \; \subset \; \fHilb^{P_\mathbf{h}}_{X/S}(T) \quad \text{ and } \quad \fFib^{\mathbf{h}}_{\FF/X/S}(T) \; \subset \; \fQuot^{P_\mathbf{h}}_{\FF/X/S}(T).
	$$
	We say that $P_\mathbf{h}$ is the Hilbert polynomial corresponding with the prescribed ``Hilbert functions'' $\mathbf{h} : \ZZ^{r+1} \rightarrow \NN^{r+1}$ of cohomologies.
	If the function $P_\mathbf{h} = \sum_{i=0}^{r} (-1)^i h_i$ does not coincide with a numerical polynomial then $\fFib^{\mathbf{h}}_{X/S}(T) = \emptyset$ for all $S$-schemes $T$.
\end{remark}

Our main result is the following theorem which says that the functor $\fFib^{\mathbf{h}}_{\FF/X/S}$ is represented by a quasi-projective $S$-scheme.

\begin{theorem}
	\label{thm_main}
	Let $\mathbf{h} = (h_0,\ldots,h_r) : \ZZ^{r+1} \rightarrow \NN^{r+1}$ be a tuple of functions and suppose that $P_\mathbf{h}(m) \in \QQ[m]$ is a numerical polynomial.
	Then there is a quasi-projective $S$-scheme $\Fib^{\mathbf{h}}_{\FF/X/S}$ that represents the functor $\fFib^{\mathbf{h}}_{\FF/X/S}$ and that is a locally closed subscheme of the Quot scheme $\Quot^{P_\mathbf{h}}_{\FF/X/S}$.
\end{theorem}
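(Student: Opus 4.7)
The plan is to realize $\Fib^{\mathbf{h}}_{\FF/X/S}$ as a locally closed subscheme of the Quot scheme $Q := \Quot^{P_\mathbf{h}}_{\FF/X/S}$ by applying the general flattening stratification of \autoref{thm_flat_Sheaf} to the universal quotient on $Q$. Since $P_\mathbf{h}$ is a numerical polynomial, Grothendieck's theorem provides $Q$ as a projective $S$-scheme carrying a universal quotient $\FF_Q \twoheadrightarrow \mathcal{U}$ on $X_Q := X \times_S Q$ that is flat over $Q$ and has Hilbert polynomial $P_\mathbf{h}$ along all fibers. Because $S$, and therefore $Q$, is locally Noetherian, the hypotheses of \autoref{thm_flat_Sheaf} apply to the projective morphism $f_{(Q)} : X_Q \to Q$ and the $Q$-flat coherent sheaf $\mathcal{U}$. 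This produces a locally closed subscheme $W := \FDir^{\mathbf{h}}_{\mathcal{U}} \hookrightarrow Q$ with the following universal property: a morphism $g : T \to Q$ factors through $W$ if and only if $R^i{{f_{(T)}}_*}\bigl((1 \times_Q g)^*\mathcal{U}(\nu)\bigr)$ is a locally free $\OO_T$-module of rank $h_i(\nu)$ for every $0 \le i \le r$ and $\nu \in \ZZ$.

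Next, I would verify that $W$ represents $\fFib^{\mathbf{h}}_{\FF/X/S}$. Given any (locally Noetherian) $S$-scheme $T$ and any $\GG \in \fFib^{\mathbf{h}}_{\FF/X/S}(T)$, \autoref{prop_base_change_sheaf} shows that $\GG$ is $T$-flat, and the fiberwise Euler characteristic of $\GG$ equals $\sum_{i=0}^{r}(-1)^i h_i = P_\mathbf{h}$, so $\GG$ defines a point of $\fQuot^{P_\mathbf{h}}_{\FF/X/S}(T)$. This produces a unique classifying morphism $g_\GG : T \to Q$ with $(1 \times_Q g_\GG)^*\mathcal{U} \cong \GG$, and the local freeness conditions built into $\fFib^{\mathbf{h}}_{\FF/X/S}(T)$ are exactly what is required for $g_\GG$ to factor through $W$. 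Conversely, pulling back the universal quotient along any morphism $T \to W \to Q$ yields a coherent quotient of $\FF_T$ lying in $\fFib^{\mathbf{h}}_{\FF/X/S}(T)$, again by the universal property of $W$ together with \autoref{prop_base_change_sheaf}. These two assignments are mutually inverse and functorial in $T$, so $W$ represents the fiber-full functor. Finally, because $Q$ is projective over $S$ and $W \subset Q$ is locally closed, $\Fib^{\mathbf{h}}_{\FF/X/S} := W$ is quasi-projective over $S$, which is the remaining conclusion.

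The hard technical work has already been absorbed into \autoref{thm_flat_Sheaf}; the remaining obstacle in the proof above is a bookkeeping one, namely ensuring that pulling back the universal quotient, forming higher direct images, and base-changing them are all compatible. This is exactly the content of \autoref{prop_base_change_sheaf}, which allows one to identify $(1 \times_Q g)^*\mathcal{U}$ with a quotient of $\FF_T$ and to translate between ``$R^i{{f_{(T)}}_*}\bigl((1 \times_Q g)^*\mathcal{U}(\nu)\bigr)$ is locally free of rank $h_i(\nu)$'' and ``$\dim_{\kappa(t)} \HH^i(X_t, \GG_t(\nu)) = h_i(\nu)$ for all $t \in T$''. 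Once this identification is in place, the theorem follows immediately by taking $W = \FDir^{\mathbf{h}}_{\mathcal{U}}$ inside $Q = \Quot^{P_\mathbf{h}}_{\FF/X/S}$.
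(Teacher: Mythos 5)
Your proposal is correct and mirrors the paper's own proof step for step: both identify the Quot scheme via Grothendieck's theorem, apply \autoref{thm_flat_Sheaf} to the universal quotient to produce the locally closed subscheme $\FDir^{\mathbf{h}}_{\mathcal{U}} \subset Q$, and use \autoref{prop_base_change_sheaf} to translate between the fiberwise cohomological dimension condition and local freeness of the higher direct images. The deduction of quasi-projectivity from the locally closed immersion into a projective $S$-scheme is also the same.
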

\begin{proof}
	By \autoref{rem_relation_Fib_Hilb}, there is an injective morphism of functors 
	$$
	\Phi \,:\,  \fFib^{\mathbf{h}}_{\FF/X/S} \rightarrow  \fQuot^{P_\mathbf{h}}_{\FF/X/S}.
	$$
	We shall show that $\fFib^{\mathbf{h}}_{\FF/X/S}$ is a locally closed subfunctor of $\fQuot^{P_\mathbf{h}}_{\FF/X/S}$.
	By the existence of the Quot scheme \cite{GROTHENDIECK_HILB, ALTMAN_KLEIMAN_COMPACT_PICARD}, the functor $\fQuot^{P_\mathbf{h}}_{\FF/X/S}$ is represented by a projective $S$-scheme $\Quot^{P_\mathbf{h}}_{\FF/X/S}$  and a universal quotient  $\FF_{\Quot^{P_\mathbf{h}}_{\FF/X/S}} \twoheadrightarrow \mathcal{W}^{P_\mathbf{h}}_{\FF/X/S}$ in $\fQuot^{P_\mathbf{h}}_{\FF/X/S}\big(\Quot^{P_\mathbf{h}}_{\FF/X/S}\big)$.
	Let $Q := \Quot^{P_\mathbf{h}}_{\FF/X/S}$ and $\mathcal{W} := \mathcal{W}^{P_\mathbf{h}}_{\FF/X/S}$.
	Thus, for each $S$-scheme $T$ and for each quotient $\FF_T \twoheadrightarrow \GG$ in  $\fQuot^{P_\mathbf{h}}_{\FF/X/S}(T)$, there is a unique classifying $S$-morphism $g_{T,\GG} : T \rightarrow Q$ such that 
	$
	\GG  =  (1 \times_S g_{T,G})^*\mathcal{W}.
	$
	
	By using \autoref{thm_flat_Sheaf}, let $\Fib^{\mathbf{h}}_{\FF/X/S} := \FDir_\mathcal{W}^{\mathbf{h}} \subset Q$ be the locally closed subscheme of $Q$ that represents the functor $\fFDir_\mathcal{W}^{\mathbf{h}}$.
	So, it follows that a quotient in $\FF_T \twoheadrightarrow \GG$ in  $\fQuot^{P_\mathbf{h}}_{\FF/X/S}(T)$ belongs to $\fFib^{\mathbf{h}}_{\FF/X/S}(T)$ if and only if $g_{T,\GG}$ factors through $\Fib^{\mathbf{h}}_{\FF/X/S}$.
	Finally, this shows that the functor $\fFib^{\mathbf{h}}_{\FF/X/S}$ is represented by the $S$-scheme $\Fib^{\mathbf{h}}_{\FF/X/S}$ and by the universal quotient $\FF_{\Fib^{\mathbf{h}}_{\FF/X/S}} \twoheadrightarrow (1 \times_S \iota)^*\mathcal{W}$ in $\fFib^{\mathbf{h}}_{\FF/X/S}\big(\Fib^{\mathbf{h}}_{\FF/X/S}\big)$, where $\iota : \Fib^{\mathbf{h}}_{\FF/X/S} \hookrightarrow Q$ denotes the natural locally closed immersion.
	Since $Q$ is a projective $S$-scheme, we obtain that $\Fib^{\mathbf{h}}_{\FF/X/S}$ is a quasi-projective $S$-scheme.
\end{proof}

\begin{remark}
	\label{rem_subfunctor}
	As pointed out in the proof \autoref{thm_main}, $\fFib^{\mathbf{h}}_{\FF/X/S}$ is a locally closed subfunctor of $\fQuot^{P_\mathbf{h}}_{\FF/X/S}$.
\end{remark}

\begin{remark}
	When the base scheme $S$ is obvious from
	context, we may simply write the fiber-full schemes as $\Fib^{\mathbf{h}}_{\FF/X}$ and $\Fib^{\mathbf{h}}_{X}$ instead of $\Fib^{\mathbf{h}}_{\FF/X/S}$ and $\Fib^{\mathbf{h}}_{X/S}$, respectively.
\end{remark}

\begin{remark} Since the dimensions of the cohomology groups can jump in flat families, the fiber-full scheme is usually not projective \cite[Example III.12.9.2]{HARTSHORNE}.
\end{remark}

We now recall the following notions. 

\begin{definition}
	Let $\kk$ be a field and $Y \subset \PP_\kk^r$ be a closed subscheme.
	Let $R_Y$ be the homogeneous coordinate ring of $Y$.
	We say that $Y$ is \emph{arithmetically Cohen-Macaulay} (ACM for short) if $R_Y$ is a Cohen-Macaulay ring. 
	If $R_Y$ is a Gorenstein ring then $Y$ is said to be \emph{arithmetically Gorenstein} (AG for short).
\end{definition}

Next, we show the existence of open subschemes of the fiber-full scheme that parametrize ACM and AG schemes.
A closed subscheme $Y \subset \PP_\kk^r$ is ACM if and only if the following two conditions are satisfied:
\begin{enumerate}[(1)]
	\item $\HH^i(Y, \OO_Y(\nu)) =0$ for all $1 \le i \le \dim(Y)-1$ and $\nu \in \ZZ$, and
	\item the natural map $R_Y \rightarrow \bigoplus_{\nu \in \ZZ}\HH^0(Y, \OO_Y(\nu))$ is bijective if $\dim(Y) > 0$, or injective if $\dim(Y)=0$. 
\end{enumerate}
Let $0 \le d \le r$ and $h_0, h_d : \ZZ \rightarrow \NN$ be two functions, and consider the tuple of functions $\mathbf{h} : \ZZ^{r+1} \rightarrow \NN^{r+1}$ given by $\mathbf{h} = (h_0, 0,\ldots,0,h_d,0,\ldots,0)$ where $0 : \ZZ \rightarrow \NN$ denotes the zero function.
To study ACM and AG schemes, it then becomes natural to consider the following two functors. 
For any (locally Noetherian) $S$-scheme  $T$, we have 
 $$
 \fACM_{X/S}^{h_0,h_d}(T) \;:= \;  \left\lbrace \text{closed subscheme } Z \subset X_T \;
 \begin{array}{|l}
 	Z \in \fFib^{\mathbf{h}}_{X/S}(T) \text{ and $Z_t$ is  ACM for all $t\in T$} 
 \end{array}
 \right\rbrace
 $$
 and
 $$
 \fAG_{X/S}^{h_0,h_d}(T) \;:= \;  \left\lbrace \text{closed subscheme } Z \subset X_T \;
 \begin{array}{|l}
 	Z \in \fFib^{\mathbf{h}}_{X/S}(T) \text{ and $Z_t$ is  AG for all $t\in T$} 
 \end{array}
 \right\rbrace.
 $$
By using the base change results of \autoref{lem_Ext_base_change} and \autoref{lem_base_change_loc}, we can immediately deduce that $ \fACM_{X/S}^{h_0,h_d}$ and $ \fAG_{X/S}^{h_0,h_d}$ are indeed contravariant functors from the category of (locally Noetherian) $S$-schemes into the category of sets.
 The following theorem gives the representability of these two functors.

 \begin{theorem} \label{thm_ACM}
 	Let $0\le d \le r$ and $h_0, h_d : \ZZ \rightarrow \NN$ be two functions, and consider the tuple of functions $\mathbf{h}  = (h_0, 0,\ldots,0,h_d,0,\ldots,0): \ZZ^{r+1} \rightarrow \NN^{r+1}$.
 	Suppose that $P_\mathbf{h}(m) \in \QQ[m]$ is a numerical polynomial.
 	Then there exist open $S$-subschemes $\ACM_{X/S}^{h_0,h_d}$ and $\AG_{X/S}^{h_0,h_d}$ of $\Fib^{\mathbf{h}}_{X/S}$ that represent the functors  $\fACM_{X/S}^{h_0,h_d}$ and $\fAG_{X/S}^{h_0,h_d}$, respectively.
 \end{theorem}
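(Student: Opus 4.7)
The plan is to show that $\fACM_{X/S}^{h_0,h_d}$ and $\fAG_{X/S}^{h_0,h_d}$ are open subfunctors of $\fFib^{\mathbf{h}}_{X/S}$; since the latter is represented by $\Fib^{\mathbf{h}}_{X/S}$ by \autoref{thm_main}, this will produce the desired open subschemes. Working locally on $S$, we may assume $S$ is Noetherian affine, so $F:=\Fib^{\mathbf{h}}_{X/S}$ is Noetherian. Let $\mathcal{Z}\subset X_F\subset\PP^r_F$ be the universal closed subscheme with ideal sheaf $\mathcal{J}$, and let $p:\PP^r_F\to F$ denote the projection. Because the tuple $\mathbf{h}$ has zero entries at all indices $i\notin\{0,d\}$, the intermediate-cohomology condition (1) of the stated ACM criterion holds on every fiber automatically, so the problem reduces to the openness of the locus where condition (2) holds, together with the further Gorenstein condition on that locus.

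For $d\ge 1$ with $R_{\mathcal{Z}_t}$ interpreted as the saturated coordinate ring (so that $\HL^0(R_{\mathcal{Z}_t})=0$ is automatic), condition (2) is the vanishing of $\HH^1(\PP^r_t,\mathcal{J}_t(\nu))$ for every $\nu\in\ZZ$. Applying $p_*$ to $0\to\mathcal{J}(\nu)\to\OO_{\PP^r_F}(\nu)\to\iota_*\OO_{\mathcal{Z}}(\nu)\to 0$ and using $R^1p_*\OO_{\PP^r_F}(\nu)=0$, one identifies $R^1p_*\mathcal{J}(\nu)\cong\Coker(\phi_\nu)$, where $\phi_\nu:p_*\OO_{\PP^r_F}(\nu)\to p_*\OO_{\mathcal{Z}}(\nu)$ is a map of locally free $\OO_F$-modules by fiber-fullness. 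Since cokernels commute with base change, $\Coker(\phi_\nu)_t\cong\HH^1(\PP^r_t,\mathcal{J}_t(\nu))$, whence each $U_\nu:=F\setminus\Supp\bigl(\Coker(\phi_\nu)\bigr)$ is open. The intersection $\bigcap_\nu U_\nu$ reduces to finitely many $\nu$: for $\nu<0$, $p_*\OO_{\PP^r_F}(\nu)=0$ and $\Coker(\phi_\nu)\cong p_*\OO_{\mathcal{Z}}(\nu)$ is locally free of rank $h_0(\nu)$, so $U_\nu$ is either all of $F$ (if $h_0(\nu)=0$) or empty (in which case $\fACM_{X/S}^{h_0,h_d}$ is empty, hence trivially representable); for $\nu\gg 0$, Serre vanishing on the Noetherian base yields $R^1p_*\mathcal{J}(\nu)=0$ globally. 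Therefore $\ACM_{X/S}^{h_0,h_d}=\bigcap_\nu U_\nu$ is open in $F$. For $d=0$, condition (2) is merely injectivity, which is automatic for the saturated coordinate ring, so $\ACM_{X/S}^{h_0,h_d}=F$.

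For the Gorenstein locus, a graded ACM ring $R$ is Gorenstein iff its canonical module $\omega_R$ is cyclic. On $\ACM_{X/S}^{h_0,h_d}$ one constructs a relative canonical sheaf $\omega:=\sExt^{r-d}_{\OO_{\PP^r_F}}\bigl(\iota_*\OO_{\mathcal{Z}},\,\OO_{\PP^r_F}(-r-1)\bigr)$; passing through the graded-module/sheaf correspondence as in the proof of \autoref{thm_flat_Sheaf} and applying \autoref{lem_Ext_base_change} to a truncated section module of $\iota_*\OO_{\mathcal{Z}}$ (which is $\OO_F$-flat in $\Fib^{\mathbf{h}}$ and whose $\Ext^i$'s vanish for $i\ne r-d$ on the ACM locus), one obtains base-change compatibility $\omega_t\cong\omega_{R_{\mathcal{Z}_t}}$. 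Cyclicity of $\omega_t$ is the condition $\sum_\nu\dim_{\kappa(t)}(\omega_t/\mm\omega_t)_\nu\le 1$, a finite sum since $\omega$ is coherent, and each summand is upper semicontinuous in $t$; hence this condition cuts out an open subscheme $\AG_{X/S}^{h_0,h_d}\subset\ACM_{X/S}^{h_0,h_d}$. The main technical obstacle is verifying the base-change isomorphism for $\omega$ rigorously: one must identify $M:=\bigoplus_{\nu\ge m}p_*\OO_{\mathcal{Z}}(\nu)$ as a finitely generated $\OO_F$-flat graded $R$-module and check that its $\Ext^i$ groups vanish for $i\ne r-d$ on the ACM stratum, whereupon \autoref{lem_Ext_base_change}(iii) supplies the needed compatibility and the semicontinuity arguments close the proof.
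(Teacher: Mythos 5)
Your proof follows the same two-stage strategy as the paper: reduce to a Noetherian affine base, show the ACM locus is open inside $F := \Fib^{\mathbf{h}}_{X/S}$, and then carve out the AG locus as a further open subset by a type-at-most-one condition. For the ACM part you phrase things with the ideal sheaf instead of local cohomology, tracking the vanishing of $R^1p_*\mathcal{J}(\nu)\cong\Coker(\phi_\nu)$; since $[\HL^1(R_{Z_t})]_\nu\cong\HH^1(\PP^r_t,\mathcal{J}_t(\nu))$ this is exactly the paper's condition $\HL^1(R/I\otimes_A\kappa(f))=0$, and your explicit reduction to a finite intersection (negative twists, Serre vanishing for $\nu\gg0$, finitely many middle twists) is actually more detailed than the paper, which simply asserts openness of the vanishing locus. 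For the AG part the paper works with $R/I$ and the $(d+1)$-st Bass number; you use the equivalent cyclicity-of-the-canonical-module formulation (the type of a graded CM quotient equals the minimal number of generators of $\Ext^{r-d}_{R_t}(R_{Z,t},R_t(-r-1))$), so the statements are interchangeable.

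There is, however, a genuine error in the AG step. You propose to apply \autoref{lem_Ext_base_change} to the truncated section module $M:=\bigoplus_{\nu\ge m}p_*\OO_{\mathcal Z}(\nu)$ after checking that ``its $\Ext^i$ groups vanish for $i\ne r-d$ on the ACM stratum.'' That check will fail. Writing $N:=(R/I)/M$, a nonzero finite-length graded module as soon as the truncation is proper (and it must be, since the fiber-full construction requires $m$ large enough for $M$ to be $A$-flat), the long exact sequence obtained by applying $\Ext_R(-,R)$ to $0\to M\to R/I\to N\to 0$ gives $\Ext^i_R(M,R)\cong\Ext^i_R(R/I,R)$ only for $i\le r-1$, while $\Ext^r_R(M,R)\cong\Ext^{r+1}_R(N,R)$, which is a nonzero graded Matlis dual of $N$ by local duality. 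Since \autoref{lem_Ext_base_change}(iii) requires $A$-flatness of \emph{every} $\Ext^i_R(M,R)$ for $0\le i\le r$, you cannot conclude base change for $\omega$ without separately establishing flatness and base-change behavior of $\Ext^r_R(M,R)$, equivalently of $N$ itself, which is not automatic. The paper sidesteps this entirely by never passing to a truncation: it states the Bass-number condition directly in terms of $R/I\otimes_A\kappa(f)\cong R_{Z,t}$ and invokes semicontinuity of $\mu_{d+1}$, so no appeal to \autoref{lem_Ext_base_change} for a truncated module is needed. Your argument is salvageable, but not as written; you would need to either handle the extra $\Ext^r$ term, or follow the paper and work with $R/I$ and Bass numbers from the start.
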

\begin{proof}
	By \autoref{thm_main},  there is a pair $(\Fib^{\mathbf{h}}_{\FF/X/S}, \mathcal{I})$  representing the functor $\fFib^{\mathbf{h}}_{\FF/X/S}$, where $\Fib^{\mathbf{h}}_{\FF/X/S}$ is fiber-full scheme and $ \mathcal{I}$ is the universal ideal sheaf on $\PP_{\Fib^{\mathbf{h}}_{\FF/X/S}}^r$.
	Let $F := \Fib^{\mathbf{h}}_{\FF/X/S}$.
	This means that, for each $S$-scheme $T$ and for each $Z \in \fFib^{\mathbf{h}}_{\FF/X/S}(T)$, there is a unique classifying $S$-morphism $g_{T, Z} : T \rightarrow F$ such that  $\mathcal{I}_{Z}=(1 \times_S g_{T,Z})^*\mathcal{I}$ is the ideal sheaf on $\PP_T^r$ that corresponds with the closed subscheme $Z \subset \PP_T^r$.
	
	Fix $Z \in \fFib^{\mathbf{h}}_{\FF/X/S}(T)$, $g_{T, Z} : T \rightarrow F$ and $\mathcal{I}_{Z}=(1 \times_S g_{T,Z})^*\mathcal{I}$.
	Since the conditions defining the functors $\fACM_{X/S}^{h_0,h_d}$ and $\fAG_{X/S}^{h_0,h_d}$ are local on $T$, we can restrict the morphism $g_{T,Z}$ to affine open subschemes $\Spec(B) \subset T$ and $\Spec(A) \subset F$ with $A$ being Noetherian.
	So, we assume that $T = \Spec(B)$ and $F=\Spec(A)$.
	Let $R := A[x_0,\ldots,x_r]$ with $\PP_A^r=\Proj(R)$ and $\mm=(x_0,\ldots,x_r) \subset R$.
	Let $I \subset R$ be the saturated ideal $I:=\bigoplus_{\nu \in \ZZ} \HH^0(\PP_A^r, \mathcal{I}(\nu))$.
	The saturated ideal and homogeneous coordinate ring of $Z$ are given by $I_Z := \bigoplus_{\nu \in \ZZ} \HH^0(\PP_B^r, \mathcal{I}_Z(\nu)) \cong I \otimes_{A} B$ and $R_Z := B[x_0,\ldots,x_r]/I_Z \cong  R/I \otimes_{A} B$, respectively. 
	For all $t \in T$, let $R_t := B[x_0,\ldots,x_r] \otimes_{B} \kappa(t) \cong R \otimes_{A} \kappa(t)$ and $R_{Z,t}  := R_Z \otimes_{B} \kappa(t) \cong R/I \otimes_{A} \kappa(t)$.
	
	First, we show that $\fACM_{X/S}^{h_0,h_d}$ is representable.
	By construction, $\HL^0(R_{Z,t})=0$ for all $t \in T$, and so $Z_t$ is ACM  for all $t \in T$ when $d =0$. 
	If $d>0$, we have that $Z_t$ is ACM for all $t \in T$  if and only if $\HL^1(R_{Z,t}) =0$ for all $t \in T$.
	We have that the locus $V:=\{ f \in F \mid \HL^1(R/I \otimes_{A} \kappa(f)) = 0\}$ is an open subscheme of $F$.
	When $d > 0$,  $g_{T,Z} : T = \Spec(B) \rightarrow F = \Spec(A)$ factors through $V$ if and only if $Z \in \fACM_{X/S}^{h_0,h_d}(T)$.
	Therefore, it follows that, in both cases $d =0$ or $d>0$, $\fACM_{X/S}^{h_0,h_d}$	is represented by an open subscheme of $\ACM_{X/S}^{h_0,h_d} \subset F$.
	
	We now concentrate on the representability of 	 $\fAG_{X/S}^{h_0,h_d}$.
	Since a Gorenstein ring is Cohen-Macaulay, we assume that $Z \in \fACM_{X/S}^{h_0,h_d}(T)$ and so $g_{T,Z}$ factors through $\ACM_{X/S}^{h_0,h_d} \subset F$.
	Therefore, as $R_{Z,t}$ is a Cohen-Macaulay ring of dimension $d+1$, it is Gorenstein if and only if its $(d+1)$-th Bass number
	$$
	 \mu_{d+1}(R_{Z,t}) \;:=\; \dim_{\kappa(t)} \big(\Ext_{R_{Z,t}}^{d+1}(R_{Z,t}/\mm R_{Z,t}, R_{Z,t})\big) 
	$$
	is equal to one (see \cite[Theorem 3.2.10]{BRUNS_HERZOG}).
	By upper semicontinuity, the locus
	$$
	W:=\big\{ f \in F \mid  \mu_{d+1}(R/I \otimes_{A} \kappa(f)) \le 1\big\}
	$$
	is an open subscheme of $F$.
	On the other hand, if $f \in \ACM_{X/S}^{h_0,h_d}$, then $ \mu_{d+1}(R/I \otimes_{A} \kappa(f)) \ge 1$.
	Finally, it follows that $g_{T,Z} : T = \Spec(B) \rightarrow F = \Spec(A)$ factors through $\AG_{X/S}^{h_0,h_d}:= \ACM_{X/S}^{h_0,h_d} \cap W$ if and only if $Z \in \fAG_{X/S}^{h_0,h_d}(T)$.
	So, the proof of the theorem is complete.
\end{proof}

\section{Examples} \label{section_examples_1}
In this section, we study some examples of fiber-full schemes as subschemes of the Hilbert scheme. These include the Hilbert scheme of points and classical examples such as the Hilbert scheme of twisted cubics and the Hilbert scheme of skew lines.

\begin{example}[Points] \label{0_DIM} 
Let $S$ be a locally Noetherian scheme and $f:X \subseteq \PP^r_S \to S$ be a projective morphism. 
Let $\mathbf{h}: \Z^{r+1} \to \NN^{r+1}$ be the tuple of constant functions defined by $\mathbf{h} := (c,0,\dots,0)$  and let $P_{\mathbf{h}} = c$ be the associated Hilbert polynomial. 
For any $S$-scheme $T$ and $Z \in \fHilb_{X/S}^{P_\mathbf{h}}(T)$, we have 
$$
\dim_{\kappa(t)} \left( \HH^i(Z_t,\OO_{Z_t}(\nu)) \right) = 
	\begin{cases}
	 c & \text{ if } i = 0 \\
	0 & \text{ if } i > 0
	\end{cases}
$$
for all $t \in T$ and  $\nu \in \ZZ$. 
It follows that 
$
\fFib_{X/S}^\mathbf{h}(T) = \fHilb_{X/S}^{P_{\mathbf{h}}}(T)
$
for all $T$ and thus 
$$
\Fib_{X/S}^\mathbf{h} = \Hilb_{X/S}^{P_{\mathbf{h}}}.
$$
In particular, $\Fib^{\mathbf{h}}_{\PP^r}$ satisfies Murphy's law up to retraction for $r \geq 16$ \cite[Theorem 1.3]{JJ}. More generally, for any coherent sheaf $\FF$ on $X$, we have $\Fib_{\FF/X/S}^\mathbf{h} = \Quot_{\FF/X/S}^{P_{\mathbf{h}}}$.
\end{example}

For the next two examples, let $\kk$ be an algebraically closed field of characteristic zero.

\begin{example}[Twisted cubics] \label{TWISTED_CUBIC} By the work of \cite{TWISTED_CUBIC} it is known that $\Hilb^{3m+1}_{\PP^3_{\kk}} = H \cup H'$ is a union of two smooth irreducible components such that the general member of $H$ parametrizes a twisted cubic, and the general member of $H'$ parametrizes a plane cubic union an isolated point. It is also known that $H - H \cap H'$ is the locus of arithmetically Cohen-Macaulay curves of degree $3$ and genus $0$.
Then we have a decomposition
$$
\Hilb^{3m+1}_{\PP^3_{\kk}} = \text{Fib}^{(\mathbf{h},0,0)}_{\PP^3_{\kk}} \sqcup \text{Fib}^{(\mathbf{h}',0,0)}_{\PP^3_{\kk}} =  (H - H \cap H') \sqcup H'
$$
where $\mathbf{h} =(h_0,h_1),\mathbf{h}'=(h_0',h_1'): \ZZ^2 \rightarrow \NN^2$ are the tuples of functions given by 
$$
h_0(\nu) = \begin{cases} 
	0 & \text{ if } \nu \leq -1 \\
	3v+1 & \text{ if } \nu \geq 0,
	\end{cases}
 \quad
h_0'(\nu) = \begin{cases} 
	1 & \text{ if } \nu \leq -1 \\
	2 & \text{ if } \nu = 0 \\
	3v+1& \text{ if } \nu \geq 1
	\end{cases}
	\quad
	\text{and}
	\quad
	\begin{array}{l}
		h_1(\nu) = h_0(\nu)-(3\nu+1)\\
		h_1'(\nu) = h_0'(\nu)-(3\nu+1).
	\end{array}
$$
To verify this decomposition we appeal to the classification of ideals in \cite[\S 4]{TWISTED_CUBIC}. Since 
$$
h^0(X,\OO_X(\nu)) = \chi(\OO_X(\nu)) + h^1(X,\OO_X(\nu)) = 3\nu+1 + h^1(X,\OO_X(\nu))
$$ 
for any $[X] \in \Hilb^{3m+1}_{\PP^3_{\kk}}$, it suffices to compute $h^0(X,\OO_X(\nu))$. 
Any subscheme $[X] \in H-H \cap H'$ is arithmetically Cohen-Macaulay with the ideal sheaf having a resolution
$$
0 \to \OO_{\PP_\kk^3}(-3)^2 \to \OO_{\PP_\kk^3}(-2)^3 \to \mathcal{I}_{X} \to 0.
$$
It follows that  $h^0(\mathcal{I}_X(\nu)) = 3\binom{\nu+1}{3} - 2\binom{\nu}{3}$. Using the ideal sheaf exact sequence and the fact that $h^1(\mathcal{I}_X(\nu)) = 0$ we deduce that $h^0(X,\OO_X(\nu)) = \binom{\nu+3}{3} -  3\binom{\nu+1}{3} + 2\binom{\nu}{3} = 3\nu+1$ for $\nu \geq 0$ and $0$ otherwise, as required.

If $[X] \in H'$ then $\mathcal{I}_X = \mathcal{I}_{X'} \cap \mathcal{J}$ where $X'$ is a plane cubic and $\mathcal{J}$ defines a, possibly embedded, $0$-dimensional subscheme. Consider the exact sequence
$$
0 \to \mathcal{I}_{X'}/\mathcal{I}_{X} \to \OO_X \to \OO_{X'} \to 0
$$
of sheaves on $X$. Since $\mathcal{I}_{X'}/\mathcal{I}_{X}$ is $0$-dimensional, we have 
$
h^0(X,\mathcal{I}_{X'}/\mathcal{I}_{X}) =  \text{length}(\mathcal{I}_{X'}/\mathcal{I}_{X} ) = (3m+1) - 3m = 1.
$ 
It is straightforward to show that the cohomology of a plane curve $Y$ of degree $d$ is given by $h^0(Y,\OO_Y(\nu)) = \binom{\nu+2}{2} - \binom{\nu+2-d}{2}$. 
Thus, we deduce that $h^0(X,\OO_X(\nu)) = h^0(X',\OO_{X'}(\nu)) + 1 = \binom{\nu+2}{2}-\binom{\nu-1}{2} + 1$, as required. 
\end{example}

\begin{example}[Skew Lines] The Hilbert scheme $\Hilb^{2m+2}_{\PP^3_{\kk}}$ has been studied in \cite{CODIM_2} and \cite{LINEAR_SPACES}. In parallel with the Hilbert scheme of the twisted cubic, it is known that $\Hilb^{2m+2}_{\PP^3_{\kk}} = H \cup H'$ is a union of smooth components such that the general member of $H$ parametrizes two skew lines, and the general member of $H'$ parametrizes a plane conic union an isolated point. Furthermore $H$ is stratified by $\{H_1,\dots,H_4\}$ where 
	\begin{enumerate}
		\item $H_1 = \GL(4) \cdot V((x_0,x_1) \cap (x_2,x_3))$ is the locus of skew lines, 
		\item $H_2 = \GL(4) \cdot V((x_0,x_1) \cap (x_0,x_2) \cap (x_0^2,x_1,x_2))$ is the locus of lines meeting along an embedded point,
		\item $H_3 = \GL(4) \cdot V(x_0^2,x_0x_1,x_1^2,x_0x_2-x_1x_3)$ is the locus of pure double structures on a line,
		\item $H_4 = \GL(4) \cdot V((x_0,x_1^2) \cap (x_0^2,x_1,x_2))$ is the locus of double structures with an embedded point.
	\end{enumerate}
We also have $H \cap H' = H_2 \cup H_4 = \overline{H_2}$ (see \cite[Theorem 1.1(2)]{CODIM_2}, \cite[Example 2.25]{LINEAR_SPACES}).		
Then we have a decomposition
$$
\Hilb^{2m+2}_{\PP^3_{\kk}} = \text{Fib}^{(\mathbf{h},0,0)}_{\PP^3_{\kk}} \sqcup \text{Fib}^{(\mathbf{h}',0,0)}_{\PP^3_{\kk}} =  (H - H \cap H') \sqcup  H'  
$$
where $\mathbf{h}= (h_0,h_1), \mathbf{h'} = (h'_0,h'_1) : \ZZ^2 \rightarrow \NN^2$ are the tuples of functions 
$$
	h_0(\nu) = \begin{cases} 
	0 & \text{ if } \nu \leq -1 \\
	2\nu+2 & \text{ if } \nu \geq 0,
	\end{cases}
\quad 
	h_0'(\nu) = \begin{cases} 
	1 & \text{ if } \nu \leq -1 \\
	2v+2& \text{ if } \nu \geq 0
	\end{cases}
\quad
\text{and}
\quad
	\begin{array}{l}
	h_1(\nu) = h_0(\nu)-(2\nu+2)\\
	h_1'(\nu) = h_0'(\nu)-(2\nu+2).
\end{array}
$$
To prove this we proceed as in \autoref{TWISTED_CUBIC}. 
If $[X] \in H'$, we have $\mathcal{I}_X = \mathcal{I}_{X'} \cap \mathcal{J}$ where $X'$ is a plane conic  and $\mathcal{J}$ defines a $0$-dimensional subscheme. Arguing as in  \autoref{TWISTED_CUBIC}, we deduce that $h^i(X,\OO_X(\nu)) = h'_i(\nu)$ for all $\nu,i$. 
Note that $H- \overline{H_2} = H_1 \cup H_3$. 
If $[X] \in H_1$ is a pair of skew lines, we have $h^i(X,\OO_X(\nu)) = 2h^i(\PP_\kk^1,\OO_{\PP_\kk^1}(\nu)) = h_i(\nu)$ for all $\nu,i$. 
If $[X] \in H_3$, then $X$ defines a ribbon on $\PP_\kk^1$ of genus $-1$ \cite[\S 2]{RIBBONS} and there is an exact sequence
$$
0 \to \OO_{\PP_\kk^1} \to \OO_X \to \OO_{\PP_\kk^1} \to 0
$$
of sheaves on $X$. It follows that the $h^i(X,\OO_X(\nu)) = h^i(\PP_\kk^1,\OO_{\PP_\kk^1}(\nu)^{\oplus 2}) = h_i(\nu)$ for all $\nu,i$, as required.
\end{example}

\section{Smooth Hilbert schemes} \label{section_examples_2}

In this section, we study the fiber-full scheme as a subscheme of smooth Hilbert schemes, the latter were recently classified in \cite{SMOOTH_HILB}. 
Our main result states that if the Hilbert scheme is smooth, then it is equal to a fiber-full scheme.
For the convenience of the reader, we review some of the discussions from \cite[\S 3]{SMOOTH_HILB}. 
For the rest of this section, we use the following setup.

\begin{setup}
	\label{setup_smooth_Hilb}
	Let $\kk$ be a field, $R = \kk[x_0,\dots,x_r]$ be a polynomial ring and $\PP_\kk^r = \Proj(R)$. 
\end{setup} 

A numerical polynomial $P\in \QQ[m]$ of degree at most $r$ is the Hilbert polynomial of some subscheme in $\PP_\kk^r$ if and only if there exists an \textit{integer partition} $\lambda = (\lambda_1,\dots,\lambda_n)$ of positive integers such that $\lambda_1 \geq \cdots 
\geq \lambda_n$ and 
$$
 P = P_{\lambda} := \sum_{i=1}^n \binom{m+\lambda_i-i}{\lambda_i-1} \,\in\, \QQ[m].
$$
The dimension of any subscheme with Hilbert polynomial $P_\lambda$ is $\lambda_1-1$. 

For an integer partition $\lambda$, there is a unique saturated lexicographic ideal, denoted by $L(\lambda)$, with Hilbert polynomial $P_{\lambda}$. To describe this, let $a_j$ be the number of parts in $\lambda$ equal to $j$ for all $j \in \NN$. If $r \geq \lambda_1$ we have
$$
L(\lambda) = L(a_1,\dots,a_r) := (x_0^{a_r+1},x_0^{a_r}x_1^{a_{r-1}+1}, \dots,
	x_0^{a_r}x_1^{a_{r-1}}\cdots x_{r-3}^{a_3}x_{r-2}^{a_2+1},
	x_0^{a_r}x_1^{a_{r-1}} \cdots a_{r-2}^{a_2}x_{r-1}^{a_1}).
$$
The only case not accounted by the condition $r \geq \lambda_1$ is $\lambda = (r+1)$; in this case the corresponding lexicographic ideals is $(0)$.
For proofs of these facts see \cite[Lemma 3.3, Proposition 3.5]{SMOOTH_HILB}.

\begin{definition} For an integer partition $\lambda$, define the tuple of functions $\mathbf{h}_{\lambda} =(h_0,\dots,h_{r}):\ZZ^{r+1} \to \NN^{r+1}$ given by $h_i(\nu) := \dim_{\kk} \left(\HH^i(\PP_\kk^r, \OO_{V(L(\lambda))}(\nu))\right)$ for all $\nu \in \ZZ$.
\end{definition}

We begin by describing $\mathbf{h}_\lambda$ explicitly.

\begin{lemma} \label{lemma_lex_ext} Let $\lambda = (\lambda_1,\dots,\lambda_n) \ne (r+1)$ be an integer partition and $L(\lambda) = L(a_1\dots,a_{r})$ be the associated lexicographic ideal. 
Then for all $\nu\in \ZZ$ we have
$$
\dim_{\kk }\left(\HH^i(\PP_\kk^r, \OO_{V(L(\lambda))}(\nu))\right) = \begin{cases}
	\sum_{i=1}^n \binom{\nu+\lambda_i-i}{\nu-i+1} +
			     		\binom{a_{1} + \cdots +a_{r} - \nu-1}{1} - \binom{a_{2} + \cdots +a_{r} -\nu-1}{1} 
						& \text{ if } i = 0 \\
	\binom{a_{i+1} + \cdots +a_{r} - \nu-1}{i+1} - \binom{a_{i+2} + \cdots + a_{r} -\nu-1}{i+1} & \text{ if } i > 0.
	\end{cases}
$$
\end{lemma}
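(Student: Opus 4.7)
The plan is to establish both formulas by induction on $r$, built around the recursive structure of the lexicographic ideal. Setting $R' = \kk[x_1, \ldots, x_r]$ with irrelevant ideal $\mm' = (x_1, \ldots, x_r)$ and $L' = L(a_1, \ldots, a_{r-1}) \subset R'$, inspection of the generators shows that $L = x_0^{a_r+1} R + x_0^{a_r}(L' R)$, which yields the short exact sequence of graded $R$-modules
\[
0 \longrightarrow (R'/L')(-a_r) \xrightarrow{\,\cdot\, x_0^{a_r}\,} R/L \longrightarrow R/(x_0^{a_r}) \longrightarrow 0,
\]
where $R'/L'$ is viewed as an $R$-module via $R \twoheadrightarrow R/(x_0) \cong R'$. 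Geometrically this exhibits $V(L(\lambda)) \subset \PP_\kk^r$ as a residual extension of the thickened hyperplane $V(x_0^{a_r})$ by the smaller lex scheme $V(L') \subset \PP_\kk^{r-1}$.

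Next I will apply the long exact sequence in local cohomology with support in $\mm = (x_0, \ldots, x_r)$. Two vanishings make this especially clean. Since $x_r$ does not appear in any generator of $L$, the ideal $L$ is saturated and $H^0_\mm(R/L) = 0$; and since $R/(x_0^{a_r})$ is Cohen-Macaulay of dimension $r$, we have $H^j_\mm(R/(x_0^{a_r})) = 0$ for all $j < r$. Consequently the long exact sequence decouples into isomorphisms $H^j_\mm(R/L) \cong H^j_{\mm'}(R'/L')(-a_r)$ for $1 \le j \le r-1$, together with a single short exact sequence in degree $j = r$. A direct computation via the Koszul resolution $0 \to R(-a_r) \xrightarrow{x_0^{a_r}} R \to R/(x_0^{a_r}) \to 0$ yields $\dim_\kk [H^r_\mm(R/(x_0^{a_r}))]_\nu = \binom{a_r - \nu - 1}{r} - \binom{-\nu - 1}{r}$, matching the $i = r-1$ case of the claimed formula since $b_{r-1} = a_r$ and $b_r = 0$.

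Translating via the standard identities $h^i(\PP_\kk^r, \OO_V(\nu)) = \dim [H^{i+1}_\mm(R/L)]_\nu$ for $i \ge 1$ and $h^0(\PP_\kk^r, \OO_V(\nu)) = \dim [R/L]_\nu + \dim [H^1_\mm(R/L)]_\nu$, the inductive hypothesis for $V(L') \subset \PP_\kk^{r-1}$---in which $b'_j = b_j - a_r$ for $0 \le j \le r-2$ and $b'_{r-1} = 0$---delivers the stated binomial expressions after a short algebraic rearrangement. For the $h^0$ formula I additionally need the Hilbert function identity $\dim [R/L]_\nu = \sum_{i=1}^n \binom{\nu + \lambda_i - i}{\nu - i + 1}$, which follows from the same induction using additivity in the short exact sequence together with the elementary identity $\dim [R/(x_0^{a_r})]_\nu = \sum_{i=1}^{a_r} \binom{\nu + r - i}{\nu - i + 1}$. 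The base case $r = 1$ is a direct computation from $0 \to \OO_{\PP_\kk^1}(-a_1) \to \OO_{\PP_\kk^1} \to \OO_V \to 0$. The main obstacle will be the bookkeeping required to match the shifted invariants $b'_j$ to $b_j$ under the inductive step; the structural input, namely the decoupling of the local cohomology long exact sequence, is essentially forced by the Cohen-Macaulayness of $R/(x_0^{a_r})$.
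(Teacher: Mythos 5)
Your proposal is correct, but it takes a genuinely different route from the paper's. The paper's proof invokes the closed-form expression $\Ext_R^i(R/L,R) \cong \bigl(R/(x_0,\dots,x_{i-2},x_{i-1}^{a_{r-i+1}})\bigr)(a_{r-i+1}+\cdots+a_r+i-1)$ from Reeves--Stillman, applies graded local duality to convert this into the local cohomology of $R/L$, and then reads off dimensions from an elementary Hilbert function computation for $R/(x_0,\dots,x_{q-1},x_q^p)$; there is no induction. You instead exploit the recursive structure of the lex ideal directly: the exact sequence $0 \to (R'/L')(-a_r) \xrightarrow{\,x_0^{a_r}\,} R/L \to R/(x_0^{a_r}) \to 0$, the Cohen--Macaulayness of $R/(x_0^{a_r})$ (which kills all $H^j_\mm$ for $j<r$), and Grothendieck vanishing (which kills $H^r_{\mm'}(R'/L')$ when $L'\ne 0$) together decouple the long exact sequence into clean isomorphisms $H^j_\mm(R/L)\cong H^j_{\mm'}(R'/L')(-a_r)$ for $j\le r-1$ and $H^r_\mm(R/L)\cong H^r_\mm(R/(x_0^{a_r}))$, and then the claimed formulas follow from the inductive hypothesis in $\PP^{r-1}$ and the degree shift by $a_r$. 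I verified the reindexing: with $b_j := a_{j+1}+\cdots+a_r$ (a notation you should define, since you use it without introduction), the shift $\mu=\nu-a_r$ turns $b'_j=b_j-a_r$ back into $b_j$, and the $i=r-1$ term comes entirely from the Koszul computation of $H^r_\mm(R/(x_0^{a_r}))$, exactly as you say. Your route buys self-containment --- you do not need to import the Reeves--Stillman Ext formula or even local duality --- at the cost of more bookkeeping and an explicit base case; the paper's route is shorter but leans on citations. One small remark: in your decoupled degree-$r$ step, $H^r_{\mm'}(R'/L')$ actually vanishes (either $L'\ne 0$ forces $\dim R'/L'\le r-1$, or $\lambda$ consists only of parts equal to $r$ and then $R'/L'=0$), so that sequence is in fact an isomorphism rather than a genuine extension; this is worth pointing out explicitly since it is what makes the $i=r-1$ formula agree literally with the Koszul computation.
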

\begin{proof} 
	Fix $L = L(\lambda)$.
	By \cite[Lemma 3.2]{REEVES_STILLMAN}, we obtain
	$$
	\Ext_R^i(R/L,R) \cong \left(R/(x_0,\dots,x_{i-2},x_{i-1}^{a_{r-i+1}})\right)(a_{r-i+1} + \cdots + a_{r} + i-1), \quad 1 \leq i \leq r.
	$$
Note that $a_l$ in the notation of \cite{REEVES_STILLMAN}  corresponds to $a_{l+1}$ in our convention. Using the exact sequence 
$$0 \to (R/(x_0,\dots,x_{q-1}))(-p) \to R/(x_0,\dots,x_{q-1}) \to R/(x_0,\dots,x_{q-1},x_q^p) \to 0, 
$$ 
we deduce that
$$
\dim_\kk\left(\left[R/(x_0,\dots,x_{q-1},x_{q}^{p})\right]_{\nu}\right) = \binom{\nu + r-q}{r-q} - \binom{\nu - p+ r-q}{r-q}. 
$$
Using the above formulas and the local duality theorem (see, e.g., \cite[Theorem 3.6.19]{BRUNS_HERZOG}),  we obtain
\begin{align*}
\dim_\kk \left(\HH^{i}(\PP_\kk^r,\OO_{V(L)}(\nu))\right) & = \dim_\kk  \left([\HH_{\mm}^{i+1}(R/L)]_\nu\right) \\
			 & = \dim_\kk \left([\Ext_R^{r-i}(R/L,R)]_{-\nu-r-1}\right) \\
			 & = \dim_\kk \left(\left[R/(x_0,\dots,x_{r-i-2},x_{r-i-1}^{a_{i+1}})(a_{i+1} + \cdots + a_{r} + r-i-1)\right]_{-\nu-r-1}\right)\\
			 & =  \binom{a_{i+1} + \cdots + a_{r} - \nu-1}{i+1} - \binom{a_{i+2} + \cdots + a_{r} -\nu-1}{i+1}.
\end{align*}
for all $i>0$.
Similarly, since $L$ is saturated, we obtain
\begin{align*}
\dim_\kk\left( \HH^0(\PP_\kk^r,\OO_{V(L)}(\nu))\right) & = \dim_\kk \left( [R/L]_\nu \right) + \dim_\kk \left( [\HH_{\mm}^{1}(R/L)]_\nu \right) \\
			     & = \dim_\kk \left( [R/L]_\nu \right) +\dim_\kk \left( [\Ext_R^{r}(R/L,R)]_{-\nu-r-1} \right) \\
			     & = \sum_{i=1}^n \binom{\nu+\lambda_i-i}{\nu-i+1} + \dim_\kk \left( \left[R/(x_0,\dots,x_{r-2},x_{r-1}^{a_{1}})(a_1 + \cdots + a_{r} + r-1)\right]_{-\nu-r-1} \right) \\
			     & = \sum_{i=1}^n \binom{\nu+\lambda_i-i}{\nu-i+1} +
			     		\binom{a_{1} + \cdots + a_{r} - \nu-1}{1} - \binom{a_{2} + \cdots +a_{r} -\nu-1}{1}.
\end{align*}	 
The formula for $\dim_\kk \left([R/L]_\nu\right)$ can be found in \cite[Lemma 3.3]{SMOOTH_HILB}.
\end{proof}

Before we can prove the main result of this section, we need a simple lemma that relates the cohomologies of $V(fI)$ to those of $V(I)$ for any subscheme $V(I)\subseteq \PP_\kk^r$ of codimension at least two.

\begin{lemma} \label{lemma_detach} Let $\lambda = (\underbrace{r,\dots,r}_{a_r \emph{-times }}, \lambda')$ be an integer partition with $a_r > 0$ and $[I] \in \Hilb^{P_\lambda}_{\PP_{\kk}^r}$. 
Then we have $I = fI'$ with $[I'] \in \Hilb^{P_{\lambda'}}_{\PP^r_{\kk}}$, $\deg(f) = a_r$ and
$$
\dim_\kk \left( \HH^i(\PP_\kk^r,\OO_{V(I)}(\nu))\right) = \begin{cases} 
	\dim_\kk \left( \HH^i(\PP_\kk^r,\OO_{V(I')}(\nu-a_r))\right)  & \text{ if }  i \ne r-1  \\
	\binom{a_r-\nu-1}{r} - \binom{-\nu-1}{r}  & \text{ if } i = r-1.
	\end{cases}
$$
\end{lemma}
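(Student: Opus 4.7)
The plan is to first extract the factorization $I = fI'$ from the shape of $\lambda$, and then derive the cohomological identities from the resulting short exact sequence. For the factorization, I would note that the leading coefficient of $P_\lambda(m)$ equals $a_r/(r-1)!$, so the top-dimensional part of $V(I) \subseteq \PP^r_\kk$ is a Cartier divisor of degree $a_r$. Since $I$ is saturated and $R$ is a UFD, this divisor is cut out by a unique (up to a unit) homogeneous form $f$ of degree $a_r$. Define $I' := I:(f)$; its associated primes are precisely those of $I$ not containing $f$, so they all have height at least two and $\dim V(I') \le r-2$. Because $f$ is a non-zerodivisor modulo $I'$, one obtains $I = fI'$ and the short exact sequence
$$
0 \longrightarrow (R/I')(-a_r) \xrightarrow{\,\cdot f\,} R/I \longrightarrow R/(f) \longrightarrow 0.
$$
Taking Euler characteristics and using the binomial identity
$$
P_\lambda(m) \;=\; P_{\lambda'}(m-a_r) \,+\, \left[\binom{m+r}{r}-\binom{m+r-a_r}{r}\right],
$$
which follows from the hockey-stick identity applied to the $a_r$ parts of size $r$ in $\lambda$, shows that $[I'] \in \Hilb^{P_{\lambda'}}_{\PP^r_\kk}$.

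For the cohomology, I would sheafify and twist the above short exact sequence to get
$$
0 \longrightarrow \OO_{V(I')}(\nu-a_r) \longrightarrow \OO_{V(I)}(\nu) \longrightarrow \OO_{V(f)}(\nu) \longrightarrow 0
$$
and take its long exact sequence on $\PP^r_\kk$. The cohomology of the degree-$a_r$ hypersurface $V(f)$ is classical, obtained from $0 \to \OO_{\PP^r}(\nu-a_r) \to \OO_{\PP^r}(\nu) \to \OO_{V(f)}(\nu) \to 0$ and the known cohomology of line bundles on $\PP^r_\kk$: one has $\HH^i(\OO_{V(f)}(\nu))=0$ for $1 \le i \le r-2$, and $\dim_\kk \HH^{r-1}(\OO_{V(f)}(\nu)) = \binom{a_r-\nu-1}{r}-\binom{-\nu-1}{r}$, which is exactly the formula in the case $i=r-1$. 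Since $\dim V(I') \le r-2$, we also have $\HH^{r-1}(\OO_{V(I')}(\cdot)) = \HH^{r}(\OO_{V(I')}(\cdot)) = 0$, so splicing the long exact sequence yields $\HH^{r-1}(\OO_{V(I)}(\nu)) \cong \HH^{r-1}(\OO_{V(f)}(\nu))$, and gives the desired isomorphisms $\HH^i(\OO_{V(I)}(\nu)) \cong \HH^i(\OO_{V(I')}(\nu-a_r))$ for $i$ in the intermediate range $2 \le i \le r-2$.

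The main obstacle will be the careful treatment of the low-degree end of the long exact sequence, where the nonvanishing term $\HH^0(\OO_{V(f)}(\nu))$ couples $\HH^0$ and $\HH^1$ via the connecting map $\HH^0(\OO_{V(f)}(\nu)) \to \HH^1(\OO_{V(I')}(\nu-a_r))$. Establishing the claimed equalities $\dim_\kk \HH^i(\OO_{V(I)}(\nu)) = \dim_\kk \HH^i(\OO_{V(I')}(\nu-a_r))$ for $i \in \{0,1\}$ requires identifying this connecting map explicitly and computing its rank, presumably by exploiting the saturation of both $I$ and $I'$ together with the precise form of the factorization $I = fI'$ (in effect recognizing the multiplication-by-$f$ map at the level of saturated modules).
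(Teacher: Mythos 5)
Your sheaf-theoretic route via the short exact sequence
$$
0 \longrightarrow \OO_{V(I')}(\nu-a_r) \longrightarrow \OO_{V(I)}(\nu) \longrightarrow \OO_{V(f)}(\nu) \longrightarrow 0
$$
is genuinely different from the paper's. The paper's proof is essentially a citation: the factorization $I = fI'$ is quoted from \cite[Lemma 2.1]{TWO_BOREL}, and the cohomological formula is read off from graded local duality, $\dim_\kk \HH^i(\OO_{V(I)}(\nu)) = \dim_\kk [\Ext_R^{r-i}(R/I,R)]_{-\nu-r-1}$, combined with the isomorphisms $\Ext_R^j(R/(fI'),R) \cong \Ext_R^j(R/I',R)(a_r)$ for $j \ge 2$ and $\Ext_R^1(R/(fI'),R) \cong (R/(f))(a_r)$ coming from \cite[Fact 1]{REEVES_STILLMAN}, exactly as in the proof of \autoref{lemma_lex_ext}. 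Your version is more geometric but has to face the low-degree connecting map head-on, which the local-duality route packages invisibly into the Ext comparison.

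You can close the $i=1$ case directly. The connecting map $\delta : \HH^0(\OO_{V(f)}(\nu)) \to \HH^1(\OO_{V(I')}(\nu-a_r))$ vanishes because $\HH^0(\OO_{V(I)}(\nu)) \to \HH^0(\OO_{V(f)}(\nu))$ is already surjective: the composite $[R/I]_\nu \twoheadrightarrow [R/(f)]_\nu \to \HH^0(\OO_{V(f)}(\nu))$ is surjective (the second arrow is an isomorphism since $\HL^0(R/(f)) = \HL^1(R/(f)) = 0$, as $R/(f)$ is Cohen--Macaulay of dimension $r \ge 2$), and it factors through $\HH^0(\OO_{V(I)}(\nu))$. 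Hence $\delta = 0$, and the long exact sequence gives $\HH^1(\OO_{V(I')}(\nu-a_r)) \cong \HH^1(\OO_{V(I)}(\nu))$ when $r \ge 3$; when $r = 2$ the case $i=1=r-1$ is the other branch.

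Concerning $i=0$, your unease is fully justified and in fact fatal to the equality as printed: no analysis of the connecting map can rescue it, because the statement is \emph{false} for $i=0$. With $\delta = 0$ the sequence $0 \to \HH^0(\OO_{V(I')}(\nu-a_r)) \to \HH^0(\OO_{V(I)}(\nu)) \to \HH^0(\OO_{V(f)}(\nu)) \to 0$ is short exact, so the correct identity is
$$
\dim_\kk \HH^0(\OO_{V(I)}(\nu)) \;=\; \dim_\kk \HH^0(\OO_{V(I')}(\nu-a_r)) \;+\; \dim_\kk [R/(f)]_\nu,
$$
and $\dim_\kk [R/(f)]_\nu = \binom{\nu+r}{r} - \binom{\nu+r-a_r}{r}$ is nonzero for $\nu \ge 0$. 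Concretely, for $r=2$, $\lambda = (2,1)$, $I = L(\lambda) = (x_0^2, x_0x_1)$, $f = x_0$, $I' = (x_0,x_1)$, one has $\dim_\kk \HH^0(\OO_{V(I)}(\nu)) = \nu+2$ for $\nu \ge 1$ while $\dim_\kk \HH^0(\OO_{V(I')}(\nu-1)) = 1$ identically. The extra term depends only on $r$, $a_r$, $\nu$, so the use of \autoref{lemma_detach} in \autoref{prop_smooth_Hilb} is unaffected, but the $i=0$ branch of the lemma needs to be amended rather than proved.
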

\begin{proof} The first statement can be found in \cite[Lemma 2.1]{TWO_BOREL}. The second statement follows from the local duality theorem and \cite[Fact 1]{REEVES_STILLMAN}, similar to \autoref{lemma_lex_ext}.
\end{proof}

The next proposition provides an equality between the fiber-full scheme and the Hilbert scheme when the latter is smooth. 

\begin{proposition} 
	\label{prop_smooth_Hilb}
	Let $\lambda$ denote an integer partition for which $\Hilb^{P_{\lambda}}_{\PP_\kk^r}$ is smooth. 
	Then we have the equality
	$$
	\mathrm{Fib}^{\mathbf{h}_{\lambda}}_{\PP_\kk^r} \,=\, \Hilb^{P_{\lambda}}_{\PP_\kk^r}.
	$$
\end{proposition}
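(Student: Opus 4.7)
The plan is to show that every closed point $[X] \in \Hilb^{P_\lambda}_{\PP^r_\kk}$ satisfies $h^i(X, \OO_X(\nu)) = h_i(\nu)$ for all $i \in \NN$ and $\nu \in \ZZ$. Because $\Fib^{\mathbf{h}_\lambda}_{\PP^r_\kk}$ is a locally closed subscheme of the smooth (hence reduced) $\Hilb^{P_\lambda}_{\PP^r_\kk}$, pointwise coincidence is sufficient to conclude scheme-theoretic equality.

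First, I would establish the upper bound $h^i(X, \OO_X(\nu)) \leq h_i(\nu)$ for every $[X] \in \Hilb^{P_\lambda}_{\PP^r_\kk}$. This follows from upper semicontinuity of cohomology in flat families together with a Gröbner degeneration $X \rightsquigarrow V(L(\lambda))$: after a generic change of coordinates, the generic initial ideal with respect to the lex order degenerates further to the Borel-fixed lex-segment ideal $L(\lambda)$. Because the inequality $h^i \leq h_i(\nu)$ holds automatically, the condition defining $\Fib^{\mathbf{h}_\lambda}_{\PP^r_\kk}$ reduces to $h^i \geq h_i(\nu)$, which is \textbf{closed} by upper semicontinuity. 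Consequently $\Fib^{\mathbf{h}_\lambda}_{\PP^r_\kk}$ is a closed subscheme of $\Hilb^{P_\lambda}_{\PP^r_\kk}$, and by Hartshorne's connectedness theorem together with the smoothness hypothesis, $\Hilb^{P_\lambda}_{\PP^r_\kk}$ is irreducible. It therefore suffices to verify that $\Fib^{\mathbf{h}_\lambda}_{\PP^r_\kk}$ contains the generic point, i.e.\ that the generic cohomology equals $\mathbf{h}_\lambda$.

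For the generic cohomology computation, I would combine two reductions. The first is \autoref{lemma_detach}: when $a_r > 0$, every $[I] \in \Hilb^{P_\lambda}_{\PP^r_\kk}$ factors as $I = fI'$ with $\deg(f) = a_r$ and $[I'] \in \Hilb^{P_{\lambda'}}_{\PP^r_\kk}$, yielding a fibration $\Hilb^{P_\lambda}_{\PP^r_\kk} \to \PP(H^0(\PP^r_\kk,\OO(a_r))) \times \Hilb^{P_{\lambda'}}_{\PP^r_\kk}$ and a formula expressing $h^i(V(I),\OO(\nu))$ entirely in terms of $h^i(V(I'),\OO(\nu - a_r))$ and cohomology of the ambient $\PP^r_\kk$. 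Since this formula also matches the relation between $\mathbf{h}_\lambda$ and $\mathbf{h}_{\lambda'}$ dictated by \autoref{lemma_lex_ext}, the smoothness hypothesis descends to $\Hilb^{P_{\lambda'}}_{\PP^r_\kk}$ and the problem reduces to the case $a_r = 0$. The second is the classification of Skjelnes--Smith \cite{SMOOTH_HILB}: when $a_r = 0$, smoothness severely restricts $\lambda$, leaving only partitions for which $\Hilb^{P_\lambda}_{\PP^r_\kk}$ is a very specific homogeneous parameter space (essentially a Grassmannian or a Hilbert scheme of points on $\PP^1$ or $\PP^2$). In each such case the universal family admits an explicit description, and a direct computation of the fiberwise cohomology shows it is constant and agrees with the formulas of \autoref{lemma_lex_ext}.

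The main obstacle is the final step: one must match the explicit parametric description of each smooth case in \cite{SMOOTH_HILB} with the lex-segment cohomology formulas of \autoref{lemma_lex_ext}, and confirm that the inductive reduction via \autoref{lemma_detach} both preserves smoothness of the Hilbert scheme and correctly tracks the cohomology tuple $\mathbf{h}_\lambda \leftrightarrow \mathbf{h}_{\lambda'}$. Once this bookkeeping is in place, the reduction of the general case to the base cases of the Skjelnes--Smith list, each of which can be handled by direct computation, completes the argument.
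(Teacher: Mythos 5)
Your high-level strategy diverges from the paper's in an interesting but incomplete way, and it contains a genuine gap. The paper reduces (exactly as you do) to checking $\kk$-points, then after stripping off the $a_r > 0$ factor via \autoref{lemma_detach} it runs through the seven Skjelnes--Smith families one by one and verifies the cohomology \emph{at every closed point}. The crucial input for the families with a unique Borel-fixed point is Sbarra's theorem (your missing tool): since there $\mathrm{gin}(I)$ must equal $L(\lambda)$ and hence equal the lex ideal of $I$'s Hilbert function, Sbarra's \cite[Theorem 0.1]{SBARRA} gives \emph{equality} of local cohomology, not just an inequality, handling all $I$ at once rather than only the generic one. The two-Borel-fixed-point families are then reduced to this case by the detach trick.

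By contrast, you want to establish closedness of $\Fib^{\mathbf h_\lambda}$ in $\Hilb^{P_\lambda}$ via the universal upper bound $h^i(X,\OO_X(\nu))\le h_i(\nu)$, invoke Hartshorne's connectedness plus smoothness to get irreducibility, and then test only the generic point. The gap lies in how you justify the upper bound. You assert that ``the generic initial ideal with respect to the lex order degenerates further to the Borel-fixed lex-segment ideal $L(\lambda)$.'' This is not a single Gr\"obner degeneration, and for most $I$ we have $\mathrm{gin}(I)\ne L(\lambda)$: the gin is Borel-fixed but not a lex segment (e.g.\ for a generic complete intersection of two quadrics in $\PP^2$). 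The inequality $h^i(\mathrm{gin}(I))\le h^i(L(\lambda))$ requires either Sbarra's maximality theorem for lex ideals with the same Hilbert function (and an extra observation that $(I^{\mathrm{lex}})^{\mathrm{sat}}=L(\lambda)$ so the sheaf cohomologies of $I^{\mathrm{lex}}$ and $L(\lambda)$ agree), or Pardue's chain of degenerations toward the lex ideal. As written, this step is unsupported. Beyond that gap: your description of the codimension-$\ge 2$ smooth families as ``essentially a Grassmannian or Hilbert scheme of points on $\PP^1$ or $\PP^2$'' is not accurate — two of the families parametrize a hypersurface in a linear subspace together with a point, and the cohomology computation there is nontrivial. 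Finally, even granting the upper bound, your reduction to the generic point does not actually save work: for the unique-Borel families both you and the paper need Sbarra-type input, and for the two-Borel families both need the detach reduction. You have deferred the actual case analysis to ``bookkeeping,'' which is where the substance of the proof lives.
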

\begin{proof} 
Since the Hilbert scheme $\Hilb^{P_{\lambda}}_{\PP_\kk^r}$ is smooth, it suffices to just check that $\fFib^{\mathbf{h}_{\lambda}}_{\PP_\kk^r}(\Spec(\kk)) = \fHilb^{P_{\lambda}}_{\PP_\kk^r}(\Spec(\kk))$. 
By \cite[Theorem A]{SMOOTH_HILB} there are seven different families of $\lambda$ for which the Hilbert scheme is smooth. We can reduce to considering partitions that satisfy $a_r = 0$, i.e., Hilbert schemes parametrizing subschemes of codimension at least two. Indeed, if $a_r > 0$, \autoref{lemma_detach} implies that  
$\Fib^{\mathbf{h}_{\lambda}}_{\PP_\kk^r} = \Hilb^{P_{\lambda}}_{\PP_\kk^r}$ if and only if 
$\Fib^{\mathbf{h}_{\lambda'}}_{\PP_\kk^r} = \Hilb^{P_{\lambda'}}_{\PP_\kk^r}$ where 
$\lambda = (\underbrace{r,\dots,r}_{a_r \text{-times }}, \lambda')$. 
Thus, for the rest of the proof we will only study those partitions in \cite[Theorem A]{SMOOTH_HILB} for which $a_r = 0$.

The conclusion is immediate for Case (7) as the Hilbert scheme consists of a single point. Case (1) corresponds to the Hilbert scheme of points in $\PP_\kk^2$. 
In this case  $\lambda = (1,\dots,1)$, equivalently $P_\lambda$ is constant, and this is covered by \autoref{0_DIM}.  Similarly, Case (6) reduces to $\lambda = (1,1,1)$ which is also covered by \autoref{0_DIM}.

To deal with Case (2) and Case (3) we use the fact that they have a unique Borel-fixed point. Let $\lambda$ be as in Case (2) or Case (3) and let $[I] \in \Hilb^{P_\lambda}_{\PP_\kk^r}$ with $I$ saturated. Since $\mathrm{gin}(I)$ is Borel-fixed \cite[Theorem 15.20]{EISEN_COMM}, we have $\mathrm{gin}(I) = L(\lambda)$. This implies $I$ and $L(\lambda)$ have the same Hilbert function and thus, $L(\lambda)$ is the lexicographic ideal associated to $I$. The result now follows from \cite[Theorem 0.1]{SBARRA}. The characteristic assumption of \cite{SBARRA} does not pose any issue because, in our case, the generic initial ideal is strongly stable \cite[proof of Theorem 0.1, page 274]{SBARRA}.

Case (4) and Case (5) correspond to Hilbert schemes with two Borel-fixed points. By \cite[Theorem A]{TWO_BOREL} we have two cases
\begin{itemize}
\item $\lambda = ((d+1)^q,1)$ with $d \geq 2$ and $q \geq 2$: The general member of $\Hilb^{P_\lambda}_{\PP_\kk^r}$ parametrizes $C \cup \{P\}$ where $C\subseteq \PP_\kk^{d+1}$ is a hypersurface of degree $q$ and $P$ is a point.
\item  $\lambda = (2^q,1)$ with $q \geq 4$: The general member of $\Hilb^{P_\lambda}_{\PP_\kk^r}$ parametrizes $C \cup P$ where $C$ is a plane curve of degree $q$ and $P$ is a point.
\end{itemize}

In either case, for any subscheme $[X] \in \Hilb^{P_\lambda}_{\PP_\kk^r}$, we have $\mathcal{I}_X = \mathcal{I}_{X'} \cap \mathcal{J}$ with $[X'] \in \Hilb^{P_{\lambda}-1}_{\PP_\kk^r}$ and $\mathcal{J}$ defining a, possibly embedded, $0$-dimensional subscheme. 
Arguing as in \autoref{TWISTED_CUBIC}, we see that $\text{Fib}^{P_\lambda}_{\PP_\kk^r} = \text{Hilb}^{P_\lambda}_{\PP_\kk^r} $ if and only if $\text{Fib}^{P_\lambda-1}_{\PP_\kk^r} = \Hilb^{P_\lambda-1}_{\PP_\kk^r}$.  
But the latter equality has already been established since $\lambda = ((d+1)^q)$ and $\lambda = (2^q)$, for the aforementioned $d,q$, have a unique Borel-fixed point. 
\end{proof}

\section{Square-free Gr\"obner degenerations and the fiber-full scheme}  \label{section_grobner}

Our results in this short section hint that the fiber-full scheme is the appropriate parameter space to study degenerations into a square-free monomial ideal (a line of research that we plan to address in the subsequent paper \cite{LOCAL_FIB_FULL}). 
We now use the following setup.

\begin{setup}
	\label{setup_Grobner_deform}
	Let $\kk$ be a field, $S = \kk[x_0,\ldots,x_r]$ be a polynomial ring, $\mm = (x_0,\ldots,x_r) \subset S$ be the maximal irrelevant ideal and $>$ be a monomial order on $S$.
	Let $A = \kk[t]$ be a polynomial ring and
	let $R = A \otimes_\kk S \cong S[t]$ be a polynomial ring over $S$.
\end{setup}

We use the notations and conventions of \cite[Chapter 15]{EISEN_COMM} for monomial orders and Gr\"obner bases.
Any element $f \in S$ can be uniquely written as 
$$
f \; = \; \sum_{j} \lambda_j \, \xx^{\alpha_j} 
$$
where $\lambda_j \in \kk$ and $\xx^{\alpha_j} = x_1^{\alpha_{j,0}} \cdots x_r^{\alpha_{j,r}} \in S$ is a monomial of $S$.
We uniquely identify the monomial $\xx^\alpha = x_0^{\alpha_{0}}  \cdots x_r^{\alpha_{r}} \in S$ with the integral vector  
$$
\mu(\xx^\alpha) \; := \; \left( \alpha_{0}, \ldots, \alpha_r \right) \in \NN^{r+1}.
$$
Let $\omega = (\omega_0, \ldots, \omega_r)\in \ZZ_+^{r+1}$ be a weight vector. 
The corresponding $\omega$-degree of the monomial $\xx^\alpha = x_0^{\alpha_{0}} \cdots x_r^{\alpha_{r}} \in S$ is given by 
$$
\deg_\omega(\xx^\alpha) \; := \;  \mu(\xx^\alpha ) \cdot \omega \;=\;  \alpha_{0} \omega_{0} + \cdots + \alpha_r \omega_{r}. 
$$
For an element $f \in S$, $\deg_\omega(f)$ is the maximum $\omega$-degree of the terms of $f$ and  $\iniTerm_\omega(f)$ is the sum of all the terms of $f$ of maximal $\omega$-degree.
For an ideal $I \subset S$, we define $\iniTerm_\omega(I) \subset S$ as  the ideal generated by $\iniTerm_\omega(f)$ for all $f \in I$.
For an element $f =  \sum_{j} \lambda_j \, \xx^{\alpha_j}  \in S$, the corresponding $\omega$-homogenization is given by
$$
\hom_\omega(f) \; := \; \sum_{j} \lambda_j \, \xx^{\alpha_j} \, t^{{\deg_\omega(f)} - {\deg_\omega(\xx^{\alpha_j})}}  \; \in  R. 
$$
For an ideal $I \subset S$, we define $\hom_\omega(I) \subset R$ as  the ideal generated by $\hom_\omega(f)$ for all $f \in I$.
For completeness, we recall the following well-known result.

\begin{proposition}[{see~ \cite[Theorem 15.17]{EISEN_COMM}}]	
	\label{thm_grob_mod}
	Let $I \subset S$ be an ideal.
	Then there exists a weight vector $\omega \in \ZZ_+^{r+1}$ such that the following statements hold:
	\begin{enumerate}[\rm (i)]
		\item $\iniTerm_\omega(I) = \iniTerm_>(I)$.
		\item $R/\hom_\omega(I)$ is a free $A$-module.
		\item $R/\hom_\omega(I) \otimes_{A} A/(t) \cong S/\iniTerm_>(I)$ and $R/\hom_\omega(I) \otimes_{A} \kk[t,t^{-1}] \cong S/I \otimes_{A} \kk[t,t^{-1}]$.
	\end{enumerate} 
\end{proposition}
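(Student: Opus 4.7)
The plan is to reduce the three statements to the standard Gr\"obner degeneration construction, making explicit each of (i)--(iii). I would proceed as follows.

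First, I would fix a finite Gr\"obner basis $G = \{g_1,\ldots,g_s\}$ of $I$ with respect to $>$, and let $T\subset S$ denote the finite set of monomials appearing in the elements of $G$. Choosing a weight vector $\omega \in \ZZ_+^{r+1}$ that represents $>$ on $T$ is a routine exercise in linear algebra: the conditions $\mu(\iniTerm_>(g_j))\cdot\omega > \mu(\xx^\alpha)\cdot\omega$ for every $\xx^\alpha \ne \iniTerm_>(g_j)$ appearing in $g_j$ form a finite system of strict linear inequalities over $\QQ$, and a solution can be scaled and shifted to lie in $\ZZ_+^{r+1}$. With this $\omega$, we have $\iniTerm_\omega(g_j) = \iniTerm_>(g_j)$ for all $j$, and since $G$ is a Gr\"obner basis for $>$, the set $\{\iniTerm_\omega(g_j)\}$ generates $\iniTerm_\omega(I)$; this establishes (i).

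Next, for (ii), let $J := \hom_\omega(I)$, and let $M \subset S$ denote the set of monomials not contained in $\iniTerm_>(I)$. I would show that the natural image of $M$ in $R/J$ is an $A$-basis. Spanning: given $f \in R$, run a division algorithm against $\hom_\omega(G)$, using that each $\hom_\omega(g_j)$ has leading $\omega$-form equal to $\iniTerm_>(g_j) \in S$ (a $t$-free monomial); termination follows from Noetherianity of the monomial order on $S$, and the remainder lies in $\bigoplus_{\xx^\alpha \in M} A \cdot \xx^\alpha$. Linear independence: any $A$-relation $\sum_\alpha a_\alpha(t)\xx^\alpha \in J$ with $\xx^\alpha \in M$ specializes at $t=1$ (using that $\hom_\omega(f)|_{t=1} = f$, so $J|_{t=1} = I$) to a $\kk$-linear relation in $S/I$ among the standard monomials $M$, which is impossible. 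Hence $R/J$ is $A$-free on $M$, and the same calculation shows that the natural map $R/J \otimes_A A/(t) \to S/\iniTerm_>(I)$ is an isomorphism (both sides are $\kk$-spanned by $M$ and the map sends monomials to monomials), giving the first half of (iii).

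For the second half of (iii), the substitution $t \mapsto 1$ defines a ring homomorphism $R \to S$ sending $\hom_\omega(g_j) \mapsto g_j$ and hence $J \to I$; after tensoring with $\kk[t,t^{-1}]$ the map is invertible (its inverse is the homogenization followed by clearing a suitable power of $t$), yielding the required isomorphism $R/J \otimes_A \kk[t,t^{-1}] \cong S/I \otimes_\kk \kk[t,t^{-1}]$. The main obstacle I anticipate is the weighted division argument for freeness in (ii): one must track the $t$-tails introduced by $\hom_\omega$ carefully to ensure both termination and that the remainder remains in the $A$-span of $M$. Once this division step is in place, parts (i) and (iii) follow with only bookkeeping, and a detailed account can be found in \cite[Theorem 15.17]{EISEN_COMM}.
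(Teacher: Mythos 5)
The paper gives no argument of its own---it simply cites Eisenbud, Theorem 15.17---and your plan (choose $\omega$ to represent $>$ on a Gr\"obner basis $G$, then divide by $\hom_\omega(G)$ and specialize) is the standard one that this citation points to. Two steps in your write-up are not quite right as stated, though. In the linear-independence half of (ii), if $\sum_\alpha a_\alpha(t)\xx^\alpha \in J:=\hom_\omega(I)$ with each $\xx^\alpha\in M$, specializing at $t=1$ only shows that $a_\alpha(1)=0$, which is strictly weaker than $a_\alpha=0$. You need to first use that $J$ is homogeneous for the grading with $\deg(t)=1$ and $\deg(x_i)=\omega_i$; restricting to a single homogeneous component of degree $d$ forces $a_\alpha(t)=c_\alpha\,t^{\,d-\deg_\omega(\xx^\alpha)}$ for scalars $c_\alpha$, and only then does $a_\alpha(1)=0$ give $a_\alpha=0$.

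The argument for the second half of (iii) is also off. The ring map $\sigma\colon R\to S$ with $\sigma(t)=1$ makes $t$ act as the identity on $S/I$, so base-changing $\bar\sigma\colon R/J\to S/I$ along $A\to\kk[t,t^{-1}]$ produces a map into $S/I$ itself (with $t=1$), not into $(S/I)\otimes_\kk\kk[t,t^{-1}]$; the two $\kk[t,t^{-1}]$-module structures are incompatible and $\bar\sigma$ cannot become the desired isomorphism after localizing. What works instead is the $\kk[t,t^{-1}]$-algebra automorphism $\phi$ of $S[t,t^{-1}]$ given by $\phi(x_i)=t^{\omega_i}x_i$: a direct check gives $\phi(\hom_\omega(f))=t^{\deg_\omega(f)}f$ for all $f\in S$, so $\phi$ carries $J\cdot S[t,t^{-1}]$ onto $I\cdot S[t,t^{-1}]$ and induces $(R/J)\otimes_A\kk[t,t^{-1}]\cong (S/I)\otimes_\kk\kk[t,t^{-1}]$. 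Your parenthetical ``homogenization followed by clearing a power of $t$'' is, in effect, $\phi^{-1}(x_i)=t^{-\omega_i}x_i$, so the inverse you had in mind is correct even though the forward map ``$t\mapsto 1$'' is not. One smaller remark on (i): the inclusion $\iniTerm_\omega(I)\subseteq\iniTerm_>(I)$ does not follow formally from $\iniTerm_\omega(g_j)=\iniTerm_>(g_j)$; it requires a refinement-order or Hilbert-function argument (part of Eisenbud's Proposition 15.16), which you assert without proof.
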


We now state the main theorem pertaining to square-free Gr\"obner degenerations.

\begin{theorem}[Conca-Varbaro]
	\label{thm_sqr_free_Grob}
	Let $I \subset S$ be a homogeneous ideal. 
	If $\iniTerm_>(I)$ is square-free, then we have the equality 
	$$
	\dim_\kk\left(\big[\HL^i(S/I)\big]_\nu\right) \; = \; \dim_\kk\left(\big[\HL^i(S/\iniTerm_>(I))\big]_\nu\right)
	$$
	for all $i \ge 0$ and $\nu \in \ZZ$.
\end{theorem}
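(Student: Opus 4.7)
The plan is to exhibit the Gr\"obner degeneration as a fiber-full graded module over $A = \kk[t]$ and then apply \autoref{lem_base_change_loc} to transfer the equality of graded Hilbert functions of local cohomology from the generic fiber to the special one. First, I would use \autoref{thm_grob_mod} to pick a weight vector $\omega \in \ZZ_+^{r+1}$ with $\iniTerm_\omega(I) = \iniTerm_>(I)$ and set $M := R/\hom_\omega(I)$; that proposition gives that $M$ is $A$-flat, with $M \otimes_A \kk \cong S/\iniTerm_>(I)$ and $M \otimes_A \kk[t, t^{-1}] \cong (S/I) \otimes_\kk \kk[t, t^{-1}]$.

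The next step would be a clean reduction: if every $\HL^i(M)$ were $A$-flat, then \autoref{lem_base_change_loc} would yield $\HL^i(M) \otimes_A B \cong \HL^i(M \otimes_A B)$ for every $A$-algebra $B$. Comparing graded ranks of $\HL^i(M)$ at $B = \kk$ and $B = \kk[t, t^{-1}]$, and using that local cohomology commutes with the flat base change $\kk \to \kk[t, t^{-1}]$ on the generic side, would give the desired equality $\dim_\kk [\HL^i(S/\iniTerm_>(I))]_\nu = \dim_\kk [\HL^i(S/I)]_\nu$ for every $i$ and $\nu$.

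The core of the proof is therefore the $A$-flatness of every $\HL^i(M)$, i.e., fiber-fullness of $M$ over $A$. By the graded-module analog of condition (3) in \autoref{thm_fib_full_sheaf}, proved in \cite[Theorem A]{FIBER_FULL}, this reduces to verifying surjectivity of
$$
[\HL^i(M \otimes_A A_\mathfrak{p}/\mathfrak{p}^q A_\mathfrak{p})]_\nu \twoheadrightarrow [\HL^i(M \otimes_A \kappa(\mathfrak{p}))]_\nu
$$
for every $\mathfrak{p} \in \Spec(A)$, $q \geq 1$, $i$ and $\nu$. When $\mathfrak{p} \neq (t)$ the element $t$ is invertible in $A_\mathfrak{p}$, so $M \otimes_A A_\mathfrak{p}$ is a flat base change of $S/I$ and each thickening is a trivial extension of the fiber; surjectivity is then routine flat base change. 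The substantive case is $\mathfrak{p} = (t)$, where the fiber $M/tM = S/\iniTerm_>(I)$ is a Stanley-Reisner ring.

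The main obstacle will be precisely this surjection at $t = 0$, and it is the point at which square-freeness is genuinely needed. This is the heart of the argument of Conca and Varbaro \cite{CONCA_VARBARO}: in characteristic $p > 0$ the Stanley-Reisner ring $S/\iniTerm_>(I)$ is $F$-pure, and the resulting Frobenius splitting promotes through the infinitesimal thickenings $M/t^q M$ to yield the surjection on local cohomology; the characteristic-zero case then follows by a standard reduction mod $p$. Alternatively, one could phrase the surjection through the framework of \cite{COHOM_FULL_RINGS}: Stanley-Reisner rings are cohomologically full, and every flat deformation of a cohomologically full ring automatically satisfies the required surjection criterion, which amounts to the desired fiber-fullness of $M$.
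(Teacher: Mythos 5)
The paper does not actually prove this theorem: its ``proof'' consists of the single sentence citing Theorem~1.2 of Conca--Varbaro \cite{CONCA_VARBARO}, with \cite{CONFERENCE_LEVICO} and \cite{FIBER_FULL} mentioned as alternative subsequent proofs. Your outline therefore cannot ``take the same approach as the paper''; what it does is reconstruct the approach of the second alternative proof, \cite{FIBER_FULL}. Concretely: pass to the $A=\kk[t]$-flat module $M=R/\hom_\omega(I)$ via \autoref{thm_grob_mod}, reduce the desired equality of graded local-cohomology dimensions to $A$-flatness of each $[\HL^i(M)]_\nu$ via \autoref{lem_base_change_loc} plus the observation that a finitely generated flat graded piece over $\kk[t]$ is free with the same rank at $t=0$ and at the generic point, and then verify fiber-fullness of $M$ through the surjection criterion of \autoref{thm_fib_full_sheaf}(3) (really its graded-module version, \cite[Theorem A]{FIBER_FULL}). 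This is a correct and genuinely informative reduction, and it is pleasant that it recasts the Conca--Varbaro theorem in the fiber-full language of the present paper; in that sense your proposal is more detailed than the paper's.

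The caveat is that the step you isolate as ``the heart of the argument'' --- the surjectivity $\HL^i(M/t^qM)\twoheadrightarrow\HL^i(M/tM)$ at the closed point $(t)$ --- is invoked as a black box, via $F$-purity of Stanley--Reisner rings and reduction mod $p$, or equivalently via cohomological fullness \cite{COHOM_FULL_RINGS}. Since this surjectivity is precisely the technical content one must actually prove (and since your phrasing even calls it ``the heart of the argument of Conca and Varbaro''), the proposal is a reduction to a cited lemma rather than a self-contained proof of the theorem. To be fair, the paper itself does no more. Two small points to tighten if you write this out: (a) \autoref{lem_base_change_loc} also requires $M$ itself to be $A$-flat, which \autoref{thm_grob_mod}(ii) supplies, so say so explicitly; and (b) for the cohomological-fullness route you should observe that $(M/t^qM)_{\mathrm{red}}\cong S/\iniTerm_>(I)$, which holds exactly because $\iniTerm_>(I)$ is radical --- this is where square-freeness enters that variant of the argument.
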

\begin{proof}
	This is \cite[Theorem 1.2]{CONCA_VARBARO}.
	Alternative subsequent proofs can be found in \cite[Corollary 3.5]{CONFERENCE_LEVICO} and \cite[Theorem 4.1]{FIBER_FULL}.
\end{proof}

Finally, we immediately obtain the following corollary.

\begin{corollary} \label{cor_sqr_free_curve}
	Let $I \subset S$ be a homogeneous ideal and assume that $\iniTerm_>(I)$ is square-free.
	Let $\hom_\omega(I) \subset R$ be a homogenization with special fiber equal to $\iniTerm_>(I)$.
	Let $Z \subset \PP_\kk^r$ be the closed subscheme given by $Z = \Proj(S/I)$.
	Let $\mathbf{h} = (h_0,\ldots,h_{r}) : \ZZ^{r+1} \rightarrow \NN^{r+1}$ be the tuple of functions given by $h_i(\nu) := \dim_\kk \left(\HH^i(Z, \OO_Z(\nu))\right)$.
	 For each $\alpha \in \kk$, let $Z_\alpha \subset \PP_{\kk}^r$ be the closed subscheme given by  
	 $$
	 Z_\alpha = \Proj\left(R/\hom_\omega(I) \otimes_{\kk[t]} \kk[t]/(t-\alpha)\right).
	 $$
	 Then we have that 
	 $$
	 Z_\alpha \;\,\text{ corresponds with a point in }\;\, \Fib_{\PP_\kk^{r}/\kk}^\mathbf{h}
	 $$
	 for all $\alpha \in \kk$.
\end{corollary}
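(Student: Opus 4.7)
The plan is to produce an $S$-morphism $\Spec(A) \to \Fib^{\mathbf{h}}_{\PP^r_\kk/\kk}$ classifying the entire one-parameter family $Y := \Proj(R/\hom_\omega(I)) \subset \PP^r_A$, with structural morphism $f \colon Y \to \Spec(A)$. Each $Z_\alpha$ then arises by pulling back under the $\kk$-point $\Spec(\kk) \to \Spec(A)$ given by $t \mapsto \alpha$. By \autoref{thm_main} together with the description of $\fFib^{\mathbf{h}}_{X/S}(T)$ for closed subschemes, it suffices to verify that (a) $\OO_Y$ is flat over $A$, (b) each higher direct image $R^i f_*(\OO_Y(\nu))$ is locally free over $A$, and (c) the fiberwise dimensions $\dim_{\kappa(s)} H^i(Y_s, \OO_{Y_s}(\nu))$ equal $h_i(\nu)$ at every $s \in \Spec(A)$.

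Flatness (a) is immediate from \autoref{thm_grob_mod}(ii). For (c) we treat three kinds of points. At the generic point $(0)$ and at any closed point $(t-\alpha)$ with $\alpha \in \kk^\times$, \autoref{thm_grob_mod}(iii) identifies $Y$ over $\Spec \kk[t,t^{-1}]$ with the trivial family $Z \times_\kk \Spec \kk[t,t^{-1}]$, so flat base change yields the equality with $h_i(\nu)$ by the very definition of $\mathbf{h}$. At the special point $s = (t)$, the fiber is $\Proj(S/\iniTerm_>(I))$; the Conca-Varbaro theorem \autoref{thm_sqr_free_Grob} gives the graded equality $\dim_\kk [\HL^i(S/I)]_\nu = \dim_\kk [\HL^i(S/\iniTerm_>(I))]_\nu$ for all $i,\nu$. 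Combining this with the classical fact that a Gr\"obner degeneration preserves Hilbert functions, $\dim_\kk[S/I]_\nu = \dim_\kk[S/\iniTerm_>(I)]_\nu$, and the standard dictionary between graded local cohomology of $N$ and sheaf cohomology of $\widetilde{N}$ on $\PP^r_\kk$ (namely, the four-term sequence $0 \to \HL^0(N) \to N \to \bigoplus_\nu H^0(\PP^r, \widetilde{N}(\nu)) \to \HL^1(N) \to 0$ together with the isomorphisms $[\HL^{i+1}(N)]_\nu \cong H^i(\PP^r, \widetilde{N}(\nu))$ for $i \geq 1$, as used in the proof of \autoref{thm_flat_Sheaf}), applied to both $N = S/I$ and $N = S/\iniTerm_>(I)$, delivers the desired equality at $s = (t)$.

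Condition (b) now follows from (a) and (c): since $\Spec(A) = \Spec \kk[t]$ is reduced and the fiberwise cohomology dimensions are (locally) constant on $\Spec(A)$, the Grauert-type criterion for cohomology and base change forces $R^i f_*(\OO_Y(\nu))$ to be locally free of rank $h_i(\nu)$; equivalently, one can reformulate this as $\OO_Y$ being a fiber-full sheaf in the sense of \autoref{thm_fib_full_sheaf}. The main obstacle in the argument is the verification of (c) at the special fiber, which rests entirely on the deep square-free result \autoref{thm_sqr_free_Grob}; all other ingredients are routine flat base change and bookkeeping with the standard graded local cohomology / sheaf cohomology dictionary.
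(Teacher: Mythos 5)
Your argument is correct, and its core (your step (c)) is precisely the implicit content of the paper's ``direct consequence'': for $\alpha \ne 0$, \autoref{thm_grob_mod}(iii) gives $Z_\alpha \cong Z$; for $\alpha = 0$, one combines \autoref{thm_sqr_free_Grob}, the classical Gr\"obner equality $\dim_\kk [S/I]_\nu = \dim_\kk [S/\iniTerm_>(I)]_\nu$, and the local-cohomology/sheaf-cohomology dictionary to conclude that $\dim_\kk H^i(Z_0,\OO_{Z_0}(\nu)) = h_i(\nu)$. This is all the corollary asks for, since over a field the local-freeness requirement on the higher direct images is vacuous, so a $\kk$-point of $\Fib^{\mathbf{h}}_{\PP^r_\kk/\kk}$ is exactly a closed subscheme with the prescribed cohomology dimensions.

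Where you go beyond the statement is in steps (a) and (b): you assemble a morphism $\Spec(\kk[t]) \to \Fib^{\mathbf{h}}_{\PP^r_\kk/\kk}$ classifying the entire one-parameter family $Y=\Proj(R/\hom_\omega(I))$, using flatness from \autoref{thm_grob_mod}(ii) and Grauert's criterion over the reduced base $\Spec\kk[t]$ to promote constancy of fiberwise cohomology to local freeness of $R^i f_*(\OO_Y(\nu))$. This is a strictly stronger conclusion than the corollary as written, and it is not needed to prove it --- but it is exactly the assertion made informally in the introduction (``the whole degeneration lies inside the same fiber-full scheme''), so it is a worthwhile and correct elaboration. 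One could also argue (b) purely algebraically without Grauert: $[\HL^i(R/\hom_\omega(I))]_\nu$ is a finitely generated module over the PID $\kk[t]$ whose fibers have constant dimension, hence free, and then invoke \autoref{prop_base_change_sheaf}; this would keep the proof entirely inside the toolkit already developed in \autoref{section_strat_module} and \autoref{section_strat_sheaf}.
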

\begin{proof}
	This is a direct consequence of \autoref{thm_grob_mod} and \autoref{thm_sqr_free_Grob} in terms of the fiber-full scheme.
\end{proof}

\section*{Acknowledgments}

We are grateful to Bernd Sturmfels for suggesting us to collaborate and for initiating our conversations.
We thank the referee for several useful comments.

\bibliographystyle{elsarticle-num} 
\bibliography{references}

\end{document}